\numberwithin{equation}{section}
\numberwithin{figure}{section}
\theoremstyle{plain}
\newtheorem{thm}{\protect\theoremname}[section]
  \theoremstyle{remark}
  \newtheorem{rem}[thm]{\protect\remarkname}
  \theoremstyle{definition}
  \newtheorem{defn}[thm]{\protect\definitionname}
  \theoremstyle{plain}
  \newtheorem{prop}[thm]{\protect\propositionname}
  \theoremstyle{remark}
  \newtheorem*{acknowledgement*}{\protect\acknowledgementname}
  \theoremstyle{plain}
  \newtheorem{lem}[thm]{\protect\lemmaname}
  \theoremstyle{plain}
  \newtheorem{cor}[thm]{\protect\corollaryname}
  \providecommand{\acknowledgementname}{Acknowledgement}
  \providecommand{\corollaryname}{Corollary}
  \providecommand{\definitionname}{Definition}
  \providecommand{\lemmaname}{Lemma}
  \providecommand{\propositionname}{Proposition}
  \providecommand{\remarkname}{Remark}
\providecommand{\theoremname}{Theorem}
\begin{document}

\title{On the growth rate of leaf-wise intersections}

\author{Leonardo Macarini, Will J. Merry, and Gabriel P. Paternain}

\address{(L. Macarini) Universidade Federal do Rio de Janeiro, Instituto de
Matem\'atica, Cidade Universit\'aria, CEP 21941-999, Rio de Janeiro,
Brazil }

\email{\texttt{leonardo@impa.br}}

\address{(W. J. Merry) Department of Pure Mathematics and Mathematical Statistics,
University of Cambridge, Cambridge CB3 0WB, England }

\email{\texttt{w.merry@dpmms.cam.ac.uk}}

\address{(G. P. Paternain) Department of Pure Mathematics and Mathematical
Statistics, University of Cambridge, Cambridge CB3 0WB, England }

\email{\texttt{g.p.paternain@dpmms.cam.ac.uk}}
\begin{abstract}
We define a new variant of Rabinowitz Floer homology that is particularly
well suited to studying the growth rate of leaf-wise intersections.
We prove that for closed manifolds $M$ whose loop space $\Lambda M$
is {}``complicated'', if $\Sigma\subseteq T^{*}M$ is a non-degenerate
fibrewise starshaped hypersurface and $\varphi\in\mbox{Ham}_{c}(T^{*}M,\omega)$
is a generic Hamiltonian diffeomorphism then the number of leaf-wise
intersection points of $\varphi$ in $\Sigma$ grows exponentially
in time. Concrete examples of such manifolds are $(S^{2}\times S^{2})\#(S^{2} \times S^{2})$,
$\mathbb{T}^{4}\#\mathbb{C}P^{2}$, or any surface of genus greater
than one.
\end{abstract}
\maketitle

\section{Introduction}

Let $M$ denote a closed connected orientable $n$-dimensional manifold
with cotangent bundle $\pi:T^{*}M\rightarrow M$. Let $\lambda=pdq$
and $Y=p\partial_{p}$ denote the Liouville $1$-form and Liouville
vector field on $T^{*}M$ respectively, and let $\omega=d\lambda$
denote the canonical symplectic structure. Note that $i_{Y}\omega=\lambda$.
Let $\mbox{Ham}_{c}(T^{*}M,\omega)$ denote the group of compactly
supported Hamiltonian diffeomorphisms of $T^{*}M$.

Recall that a \textbf{fibrewise starshaped} \textbf{hypersurface}
$\Sigma$\textbf{ }is a closed connected separating hypersurface in
$T^{*}M$ such that $Y$ is transverse to $\Sigma$ and points in
the outwards direction. This is equivalent to requiring that $\lambda_{\Sigma}:=\lambda|_{\Sigma}$
is a positive contact form on $\Sigma$. Given a fibrewise starshaped
hypersurface $\Sigma$, let $R_{\Sigma}$ denote the Reeb vector field\textbf{
}associated to the contact $1$-form $\lambda_{\Sigma}$. Let $\phi_{t}^{\Sigma}:\Sigma\rightarrow\Sigma$
denote the flow of $R_{\Sigma}$. We say that $\Sigma$ is a \textbf{non-degenerate
}hypersurface if all the closed orbits of $R_{\Sigma}$ are transversely
non-degenerate (see Definition \ref{def:non degen} below). Given
$p\in\Sigma$, let $\mathcal{L}_{p}$ denote the leaf of the characteristic
foliation of $\Sigma$ running through $p$. We can parametrize $\mathcal{L}_{p}$
via $\mathcal{L}_{p}:=\{\phi_{t}^{\Sigma}(p)\,:\, t\in\mathbb{R}\}$.
A \textbf{defining Hamiltonian }for $\Sigma$ is an autonomous Hamiltonian
$F\in C^{\infty}(T^{*}M,\mathbb{R})$ such that $\Sigma=F^{-1}(0)$
and such that the Hamiltonian vector field $X_{F}$ is compactly supported
and satisfies $X_{F}|_{\Sigma}=R_{\Sigma}$.\newline 

Given $\varphi\in\mbox{Ham}_{c}(T^{*}M,\omega)$, we say that a point
$p\in\Sigma$ is a \textbf{leaf-wise intersection point }for $\varphi$
if there exists a real number $\eta\in\mathbb{R}$ such that \begin{equation}
\varphi(\phi_{\eta}^{\Sigma}(p))=p.\label{eq:lwp}
\end{equation}
We say that $p$ is a \textbf{periodic }leaf-wise intersection point\textbf{
}if $\mathcal{L}_{p}$ is a closed leaf. In this paper we will only
be interested in leaf-wise intersection points that are \textbf{not}
periodic. This is not a major restriction, as Albers and Frauenfelder
(see \cite[Theorem 3.3]{AlbersFrauenfelder2008} or Proposition \ref{prop:-generically no plwip}
below) show that if $n=\dim\, M\geq2$ and $\Sigma\subseteq T^{*}M$
is a non-degenerate fibrewise starshaped hypersurface then a generic
Hamiltonian diffeomorphism has no periodic leaf-wise intersection
points in $\Sigma$. Thus for simplicity the term {}``leaf-wise interection
point'' should be understood as {}``non-periodic leaf-wise intersection
point'', unless explicitly stated otherwise. With this convention
in mind, the \textbf{time-shift }$\eta\in\mathbb{R}$ of a leaf-wise
intersection point $p$ is the unqiue%
\footnote{Of course, without the implicit {}``non-periodic'' in front of the
term {}``leaf-wise intersection point'' $\eta$ is not unique: if
$\phi_{T}^{\Sigma}(p)=p$ then $\varphi(\phi_{\eta+kT}^{\Sigma}(p))=p$
for all $k\in\mathbb{Z}$.%
} real number $\eta$ such that \eqref{eq:lwp} is satisfied. real
number $\eta\in\mathbb{R}$ such that \[
\varphi(\phi_{\eta}^{\Sigma}(p))=p.
\]
A leaf-wise intersection point has zero time-shift if and only if
it is a fixed point of $\varphi$. A leaf-wise intersection point
is called \textbf{positive }if its time-shift $\eta$ is strictly
positive, and \textbf{negative }if its time-shift is strictly negative.
In this paper we will only be interested in positive leaf-wise intersection
points. This is no great loss, as the negative leaf-wise intersection
points of $\varphi$ are precisely the positive leaf-wise intersection
points of $\varphi^{-1}$.
\begin{rem}
Our definition of a leaf-wise intersection point is slightly different
to the standard one, where rather than referring to $p$ as the leaf-wise
intersection point, instead the point $\bar{p}:=\phi_{\eta}^{\Sigma}(p)$
is called {}``the leaf-wise intersection point''. With this convention
a point $\bar{p}$ is a leaf-wise intersection point if $\varphi(\bar{p})\in\mathcal{L}_{\bar{p}}$,
which is perhaps a more natural definition. However using the standard
convention it would seem natural (see \cite[p1]{AlbersFrauenfelder2010})
to define the {}``time-shift'' of $\bar{p}$ to be $-\eta$ rather
then $\eta$, and as a result with the standard definition we would
end up counting negative leaf-wise intersection points, which is somehow
less aesthetically pleasing (see the statement of Theorem A below).
\end{rem}
The leaf-wise intersection problem asks whether a given Hamiltonian
diffeomorphism always has a leaf-wise intersection point in a given
fibrewise starshaped hypersurface, and if so, whether one can obtain
a lower bound on the number of such leaf-wise intersections. This
problem was introduced by Moser in \cite{Moser1978}, and since then
has been studied by a number of different authors \cite{Bangaya1980,EkelandHofer1989,Hofer1990,Ginzburg2007,Dragnev2008,Gurel2010,Ziltener2010,AlbersFrauenfelder2010,AlbersFrauenfelder2008,AlbersFrauenfelder2010a,AlbersFrauenfelder2010b,Kang2009,Kang2010,Kang2010b,Merry2010a}.
We refer to \cite{AlbersFrauenfelder2010c} for a brief history of
the problem and a discussion of the progress made so far. Here we
mention only one result that is particularly relevant to our paper:
in \cite{AlbersFrauenfelder2008} Albers and Frauenfelder establish
that if the homology of the free loop space is infinite dimensional,
then given a non-degenerate fibrewise starshaped hypersurface $\Sigma$,
a generic Hamiltonian diffeomorphism has infinitely many leaf-wise
intersection points in $\Sigma$. This appears to have been the first
result which asserts the existence of \textbf{infinitely} many leaf-wise
intersection points, instead of just a finite lower bound. In this
paper we extend this result to show that if the base manifold $M$
satisfies a certain topological condition (roughly that its loop space
homology is sufficiently {}``complicated'' - concrete examples of
such manifolds are $(S^{2}\times S^{2})\#(S^{2}\times S^{2})$, $\mathbb{T}^{4}\#\mathbb{C}P^{2}$
or any surface of genus greater than one), then not only do generic
Hamiltonian diffeomorphisms have infinitely many leaf-wise intersection
points in any non-degenerate fibrewise starshaped hypersurface, but
the number of such leaf-wise intersection points {}``grows'' exponentially
with time. The precise statements are given below in Theorem A and
Corollaries B and C. To the best of our knowledge this is the first
result which establishes the existence of {}``more'' than just infinitely
many leaf-wise intersection points.\newline

Let us fix $\varphi\in\mbox{Ham}_{c}(T^{*}M,\omega)$. Suppose $H\in C_{c}^{\infty}(S^{1}\times T^{*}M,\mathbb{R})$
is any Hamiltonian that \textbf{generates }$\varphi$, i.e. $\phi_{1}^{H}=\varphi$.
If $p$ is a positive leaf-wise intersection point of $\varphi$ with
time-shift $\eta$ then consider the (not necessarily smooth) loop
$x\in C^{0}(S^{1},T^{*}M)$ defined by \[
x(t):=\begin{cases}
\phi_{2t\eta}^{\Sigma}(p), & 0\leq t\leq1/2,\\
\phi_{2t-2}^{H}(p), & 1/2\leq t\leq1.
\end{cases}
\]
Obviously the curve $x$ depends on the choice of Hamiltonian $H$
generating $\varphi$, but asking which free homotopy class $\alpha\in[S^{1},M]$
the projection $\pi\circ x$ belongs to is independent of $H$ (see
Lemma \ref{lem:Independence lemma} below). Thus it makes sense to
speak of\textbf{ }leaf-wise intersection points \textbf{belonging}
to\textbf{ $\alpha$}. Given $T>0$ denote by by $n_{\Sigma,\alpha}(\varphi,T)$
the number of positive leaf-wise intersection points that belong to
$\alpha$ with time-shift $0<\eta<T$. As indicated above, in this
paper we study the \textbf{growth rate} of the function $n_{\Sigma,\alpha}(\varphi,\cdot)$
for a given $\varphi\in\mbox{Ham}_{c}(T^{*}M,\omega)$. In order to
state our results we first need to introduce several definitions.
Denote by $\Lambda T^{*}M$ the free loop space of $T^{*}M$. Given
$H\in C_{c}^{\infty}(S^{1}\times T^{*}M,\mathbb{R})$, denote by $A^{H}:\Lambda T^{*}M\rightarrow\mathbb{R}$
the standard Hamiltonian action functional \begin{equation}
A^{H}(x):=\int x^{*}\lambda-\int_{0}^{1}H(t,x)dt.\label{eq:standard action functional}
\end{equation}
Denote by $\mathcal{A}(A^{H})$ the \textbf{action spectrum }of $A^{H}$:\[
\mathcal{A}(A^{H}):=\left\{ A^{H}(x)\,:\, x\mbox{ is a critical point of }A^{H}\right\} .
\]
Now suppose $\varphi\in\mbox{Ham}_{c}(T^{*}M,\omega)$. A theorem
of Frauenfelder and Schlenk \cite[Corollary 6.2]{FrauenfelderSchlenk2007}
says that if $H,K\in C_{c}^{\infty}(S^{1}\times T^{*}M,\mathbb{R})$
both \textbf{generate} $\varphi$ then%
\footnote{Strictly speaking their result pertains only to the subset of the
action spectrum generated by \textbf{contractible }periodic points.
But they work only with a \textbf{weakly exact }symplectic manifold.
In our case the symplectic form is exact (instead of just being weakly
exact), and thus the same proof carries through for the entire action
spectrum. We also remark that the same result is also true for \textbf{closed
}symplectically aspherical manifolds (see \cite[Theorem 1.1]{Schwarz2000},
which builds on Seidel \cite{Seidel1997}), although this is considerably
deeper.%
} \[
\mathcal{A}(A^{H})=\mathcal{A}(A^{K}).
\]
Thus we may define the action spectrum $\mathcal{A}(\varphi)$ of
$\varphi$ to be $\mathcal{A}(A^{H})$ for any $H\in C_{c}^{\infty}(S^{1}\times T^{*}M,\mathbb{R})$
generating $\varphi$. Now define \[
\kappa:\mbox{Ham}_{c}(T^{*}M,\omega)\rightarrow[0,\infty)
\]
by \begin{equation}
\kappa(\varphi):=\sup\left\{ \left|\eta\right|\,:\,\eta\in\mathcal{A}(\varphi)\right\} .\label{eq:kappaphi}
\end{equation}
Another way of measuring the {}``size'' of an element $\varphi\in\mbox{Ham}_{c}(T^{*}M,\omega)$
is given by the \textbf{Hofer norm}. We recall the definition: given
$H\in C_{c}^{\infty}(S^{1}\times T^{*}M,\mathbb{R})$, define\[
\left\Vert H\right\Vert _{+}:=\int_{0}^{1}\max_{(q,p)\in T^{*}M}H(t,q,p)dt,\ \ \ \left\Vert H\right\Vert _{-}:=-\int_{0}^{1}\min_{(q,p)\in T^{*}M}H(t,q,p)dt;
\]
\[
\left\Vert H\right\Vert :=\left\Vert H\right\Vert _{+}+\left\Vert H\right\Vert _{-}.
\]
For $\varphi\in\mbox{Ham}_{c}(T^{*}M,\omega)$, the Hofer norm of
$\varphi$ is defined to be:\begin{equation}
\left\Vert \varphi\right\Vert :=\inf\left\{ \left\Vert H\right\Vert \,:\, H\mbox{ generates }\varphi\right\} .\label{eq:hofer norm}
\end{equation}
Let us combine these two measures together and define \begin{equation}
\mu(\varphi):=2\kappa(\varphi)+6\left\Vert \varphi\right\Vert .\label{eq:mu phi}
\end{equation}
Write $\Lambda M$ for the free loop space of $M$ and $\Lambda_{\alpha}M$
the subspace of loops belonging to the free homotopy class $\alpha$.
Given a metric $g$ on $M$ define the energy functional\[
\mathcal{E}_{g}:\Lambda M\rightarrow\mathbb{R};
\]
\[
\mathcal{E}_{g}(q):=\int_{0}^{1}\frac{1}{2}\left|\dot{q}\right|^{2}dt.
\]
Given $0<a<\infty$ and $\alpha\in[S^{1},M]$, denote by \[
\Lambda_{\alpha}^{a}(M,g):=\left\{ q\in\Lambda_{\alpha}M\,:\,\mathcal{E}_{g}(q)\leq\frac{1}{2}a^{2}\right\} .
\]
We will prove the following theorem.\newline\newline\textbf{Theorem
A. }\emph{Let $M$ be a closed connected orientable manifold of dimension
$n\geq2$. Let $\Sigma$ be a non-degenerate fibrewise starshaped
hypersurface. Let $g$ be a bumpy Riemannian metric on $M$ with $S_{g}^{*}M$
contained in the interior of the compact region bounded by $\Sigma$.
There exists a constant $c=c(\Sigma,g)>0$ such that the following
property holds: Suppose $\varphi\in\mbox{\emph{Ham}}_{c}(T^{*}M,\mathbb{R})$
is a generic Hamiltonian diffeomorphism (see Remark \ref{allowing plwip}
for the precise meaning of the word {}``generic'' in this context).
Then for all $T>0$ sufficiently large, it holds that \begin{equation}
n_{\Sigma,\alpha}(\varphi,T)\geq\begin{cases}
\mbox{\emph{rank}}\left\{ \iota:H(\Lambda_{\alpha}^{c(T-\left\Vert \varphi\right\Vert )}(M,g);\mathbb{Z}_{2})\rightarrow H(\Lambda_{\alpha}M,\Lambda_{\alpha}^{4\mu(\varphi)}(M,g);\mathbb{Z}_{2})\right\} , & \alpha\ne0,\\
\mbox{\emph{rank}}\left\{ \iota:H(\Lambda_{0}^{c(T-\left\Vert \varphi\right\Vert )}(M,g),M;\mathbb{Z}_{2})\rightarrow H(\Lambda_{0}M,\Lambda_{0}^{4\mu(\varphi)}(M,g);\mathbb{Z}_{2})\right\} , & \alpha=0.
\end{cases}\label{eq:theorem A}
\end{equation}
}
\begin{rem}
Theorem A is proved only for $\mathbb{Z}_{2}$ coefficients. This
is because so far there is no treatment of coherent orientations for
Rabinowitz Floer homology, but we certainly expect the theorem to
hold with any field of coefficients. Because of this however, for
the remainder of this paper the notation  $H(X,A)$ for the singular
homology of a pair $(X,A)$ should always be understood as shorthand
for $H(X,A;\mathbb{Z}_{2})$. 
\end{rem}

\begin{rem}
\label{allowing plwip}As mentioned above, a generic Hamiltonian diffeomorphism
has no periodic leaf-wise intersection points, and hence it is sufficient
to prove Theorem A for Hamiltonian diffeomorphisms with no periodic
leaf-wise intersection points. In fact, we prove Theorem A for Hamiltonian
diffeomorphisms that (a) have no periodic leaf-wise intersection points
and (b) are generated by Hamiltonians for which the corresponding
Rabinowitz action functional is Morse (this condition is also generic
- again due to Albers and Frauenfelder \cite[Proposition 3.9]{AlbersFrauenfelder2010}).
The precise definition for the subset of Hamiltonian diffeomorphisms
for which we prove Theorem A is given in Definition \ref{what we prove theorem a for}
below. 
\end{rem}

\begin{rem}
\label{finite dim}A well known result which is essentially due to
Morse \cite{Morse1932} says that for any Riemannian manifold $(M,g)$
and for any $a>0$ the space $\Lambda^{a}(M,g)$ is finite-dimensional.
For the case of based loops a proof of this can be found in Milnor's
book \cite{Milnor1963}. A complete proof for the free loop space
is given in \cite{GoreskyHingston2009}. Thus the growth rate of $n_{\Sigma,\alpha}(\varphi,T)$
is also bounded from below by the growth rate of the function \[
T\mapsto\mbox{rank}\left\{ \iota:H(\Lambda_{\alpha}^{c(T-\left\Vert \varphi\right\Vert )}(M,g))\rightarrow H(\Lambda_{\alpha}M)\right\} .
\]

\end{rem}
Under certain topological assumptions on $M$, the number on the right-hand
side of \eqref{eq:theorem A} grows exponentially with $T$. For instance,
if $M$ is simply connected then a classical theorem of Gromov \cite{Gromov1978}
implies that whenever the Betti numbers $(b_{i}(\Lambda_{\alpha}M))_{i\in\mathbb{Z}}$
grow exponentially with $i$, the right-hand side of \eqref{eq:theorem A}
grows exponentially with $T$. In the simply connected case, various
results giving exponential growth of the Betti numbers $(b_{i}(\Lambda_{0}M))_{i\in\mathbb{Z}}$
have been obtained by Lambrechts \cite{Lambrechts2001a,Lambrechts2001};
a concrete example is $(S^{2}\times S^{2})\#(S^{2}\times S^{2})$.
In the non-simply connected case there are also plenty of examples
where the right-hand side of \eqref{eq:theorem A} with $\alpha=0$
still grows exponentially with $T$; see for instance \cite{PaternainPetean2005}.
To encapsulate the situation where Theorem A gives exponential growth,
following \cite{FrauenfelderSchlenk2006} we make the following definition.
\begin{defn}
\label{energy hyp}Given a closed Riemannian manifold $(M,g)$ and
$\alpha\in[S^{1},M]$ we define \[
C_{\Lambda,\alpha}(M,g):=\liminf_{a\rightarrow\infty}\frac{\log\,\mbox{rank}\left\{ \iota:H(\Lambda_{\alpha}^{a}(M,g))\rightarrow H(\Lambda_{\alpha}M)\right\} }{a}\in[0,\infty].
\]
Whilst the constant $C_{\Lambda,\alpha}(M,g)$ depends on $g$, asking
whether $C_{\Lambda,\alpha}(M,g)$ is positive or not is a purely
topological question. Thus we say that $M$ is \textbf{$(\Lambda,\alpha)$-energy
hyperbolic }if $C_{\Lambda,\alpha}(M,g)>0$ for some (and hence any)
Riemannian metric $g$ on $M$. 
\end{defn}
The following result can be proved in exactly the same way as \cite[Theorem B]{PaternainPetean2005},
and gives a wide class of Riemannian manifolds $M$ which are $(\Lambda,0)$-energy
hyperbolic.
\begin{prop}
\label{prop:PPresult}Let $M$ be a closed manifold of dimension $n\geq3$.
Suppose that $M$ can be decomposed as $N_{1}\#N_{2}$, where $\pi_{1}(N_{1})$
has a subgroup of finite index $\geq3$, and $N_{2}$ is a simply
connected manifold that is not a homology $\mathbb{Z}_{2}$-sphere.
Then $M$ is $(\Lambda,0)$-energy hyperbolic\emph{. }
\end{prop}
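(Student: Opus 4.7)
The plan is to follow the strategy of \cite[Theorem B]{PaternainPetean2005} essentially verbatim; I sketch the main steps and indicate what must be adapted. The starting point is a topological analysis of the universal cover $\tilde{M}$. Since $N_{2}$ is simply connected, van Kampen gives $\pi_{1}(M)\cong\pi_{1}(N_{1})$, and $\tilde{M}$ has the homotopy type of $\tilde{N_{1}}$ with a copy of $N_{2}$ punctured at a point attached along each lift of the connected-sum sphere (one lift per element of $\pi_{1}(N_{1})$). Choosing three distinct coset representatives of the finite-index subgroup of $\pi_{1}(N_{1})$ furnished by hypothesis exhibits a three-fold wedge $N_{2}\vee N_{2}\vee N_{2}$ as a homotopy retract of $\tilde{M}$. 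Because $N_{2}$ is not a $\mathbb{Z}_{2}$-homology sphere, each wedge factor carries non-trivial reduced $\mathbb{Z}_{2}$-homology in positive degree.

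Next I would apply the Hilton--Milnor splitting (or, equivalently, an iterated Bott--Samelson calculation) to conclude that the Betti numbers of $\Omega(N_{2}\vee N_{2}\vee N_{2})$ with $\mathbb{Z}_{2}$-coefficients grow exponentially with the degree. Composing with the retraction yields the same lower bound for the based loop space $\Omega\tilde{M}$, which is homotopy equivalent to the component $\Omega_{0}M$ of contractible based loops in $M$. The Serre spectral sequence for the fibration $\Omega_{0}M\to\Lambda_{0}M\to M$, combined with the fact that the constant-loop inclusion $M\hookrightarrow\Lambda_{0}M$ is a section, propagates the exponential growth to the Betti numbers of $\Lambda_{0}M$.

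To convert exponential growth of Betti numbers in degree into the desired exponential growth in energy, I would invoke the Morse theory of the energy functional $\mathcal{E}_{g}$. For a bumpy metric $g$, the Morse index of a closed geodesic is bounded linearly in its length (a standard consequence of the index theorem for closed geodesics, see \cite{GoreskyHingston2009}); by the Morse inequalities, every $\mathbb{Z}_{2}$-homology class of $\Lambda_{0}M$ of degree $i$ is represented by a cycle supported in $\Lambda_{0}^{Ci}(M,g)$ for some constant $C$ independent of $i$. Together with the exponential lower bound on $b_{i}(\Lambda_{0}M)$, this produces $C_{\Lambda,0}(M,g)>0$.

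The main technical obstacle is the last step. Exponential growth of Betti numbers does not formally imply exponential growth of the rank of the filtration map $H(\Lambda_{0}^{a}(M,g))\to H(\Lambda_{0}M)$, because homology classes in degree $i$ could, a priori, require representing cycles of very large energy. The content of \cite{PaternainPetean2005} lies in constructing the exponentially many classes explicitly, via short geodesic excursions into successive lifts of $N_{2}$ in the universal cover, so that the representing cycles have energy growing only linearly with the degree and consequently survive in the image of the filtration. It is precisely this constructive piece that must be transcribed into the present setting, and I expect the adaptation to be straightforward given the verbatim structural agreement with \cite{PaternainPetean2005}.
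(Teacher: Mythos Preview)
Your proposal is correct and matches the paper's approach exactly: the paper gives no proof of this proposition at all, stating only that it ``can be proved in exactly the same way as \cite[Theorem B]{PaternainPetean2005}'', which is precisely the reference you are transcribing. Your sketch is therefore already more detailed than what the paper provides; the only minor imprecision is that the retract onto a threefold wedge should be taken inside the finite cover corresponding to the index-$\geq 3$ subgroup rather than the universal cover, but this does not affect the argument.
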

Note that $M=\mathbb{T}^{4}\#\mathbb{C}P^{2}$ satisfies the hypotheses
of Proposition \ref{prop:PPresult}. An immediate corollary of Remark
\ref{finite dim} and Theorem A is the following result, which, as
far as we are aware, is new even in the case $\Sigma=S_{g}^{*}M$.\newline\newline\textbf{Corollary
B. }\emph{Let $M$ be a closed connected orientable manifold of dimension
$n\geq2$ and fix $\alpha\in[S^{1},M]$. Assume $M$ is $(\Lambda,\alpha)$-energy
hyperbolic. Let $\Sigma\subseteq T^{*}M$ be a non-degenerate fibrewise
starshaped hypersurface. If $\varphi\in\mbox{\emph{Ham}}_{c}(T^{*}M,\mathbb{R})$
is a generic Hamiltonian diffeomorphism then $n_{\Sigma,\alpha}(\varphi,T)$
grows exponentially with $T$.}\newline

If we don't fix the free homotopy class $\alpha\in[S^{1},M]$ then
another source of examples for which we obtain an exponential growth
rate of leaf-wise intersections occurs when the fundamental group
modulo conjugacy of $M$ has exponential growth. In order to explain
this more precisely, let us first say that a smooth manifold $M$
is \textbf{$\Lambda$-energy hyperbolic }if \[
C_{\Lambda}(M,g):=\liminf_{a\rightarrow\infty}\frac{\log\,\mbox{rank}\left\{ \iota:H(\Lambda^{a}(M,g))\rightarrow H(\Lambda M)\right\} }{a}>0
\]
for some (and hence any) Riemannian metric $g$ on $M$. Next, note
that the fundamental group of $M$ is necessarily finitely generated.
Denote by $\widetilde{\pi}_{1}(M)\cong[S^{1},M]$ the fundamental
group of $M$ modulo conjugacy classes. Given $s\in\pi_{1}(M)$, denote
by $\overline{s}$ the image of $s$ in $\widetilde{\pi}_{1}(M)$.
Given a finite set of generators $S\subseteq\pi_{1}(M)$, let $\gamma_{S}:\mathbb{N}\rightarrow\mathbb{N}$
denote the \textbf{growth function }of $S$, defined by\[
\gamma_{S}(k):=\#\left\{ \alpha\in\widetilde{\pi}_{1}(M)\,:\exists\, s_{1},\dots,s_{k}\in S\cup S^{-1},\ \alpha=\overline{s_{1}s_{2}\dots s_{k}}\right\} .
\]
We define the \textbf{growth rate }$\nu(S)$ of $S$\textbf{ }to be
the number\[
\nu(S):=\lim_{k\rightarrow\infty}\frac{\log\gamma_{S}(k)}{k}\in[0,\infty).
\]
We say that $\widetilde{\pi}_{1}(M)$ as \textbf{exponential growth
}if $\nu(S)>0$ for some (and hence any) finite set of generators
$S$. There are many examples of manifolds $M$ for which $\widetilde{\pi}_{1}(M)$
has exponential growth; for example any surface of genus greater than
one. One can show (see for instance \cite[Lemma 4.15]{McLean2010})
that if $\widetilde{\pi}_{1}(M)$ has exponential growth then $M$
is $\Lambda$-energy hyperbolic. Define \[
n_{\Sigma}(\varphi,T):=\sum_{\alpha\in[S^{1},M]}n_{\Sigma,\alpha}(\varphi,T).
\]
Then we have:\newline\newline\textbf{Corollary C. }\emph{Let $M$
be a closed connected orientable manifold of dimension $n\geq2$.
Assume $\widetilde{\pi}_{1}(M)$ has exponential growth. Let $\Sigma\subseteq T^{*}M$
be a non-degenerate fibrewise starshaped hypersurface. If $\varphi\in\mbox{\emph{Ham}}_{c}(T^{*}M,\mathbb{R})$
is a generic Hamiltonian diffeomorphism then $n_{\Sigma}(\varphi,T)$
grows exponentially with $T$.}\newline

As with Corollary B, we believe this result is also new even in the
case $\Sigma=S_{g}^{*}M$.\newline
\begin{rem}
Whilst in general our results are only valid for a generic Hamiltonian
diffeomorphism $\varphi$, it will be apparent in the proof below
that the case $\varphi\equiv\mathbb{1}$ \textbf{is }included%
\footnote{Indeed, we will consider the general case only after first proving
the special case $\varphi\equiv\mathbb{1}$%
}. Thus as a special case of our results we obtain the following fact:
for a non-degenerate fibrewise starshaped hypersurface $\Sigma\subseteq T^{*}M$,
where $M$ is a $(\Lambda,\alpha)$-energy hyperbolic manifold, the
number of closed Reeb orbits belonging to the free homotopy class
$\alpha$ grows exponentially with time. In fact, this even shows
that the number of \textbf{geometrically distinct }closed Reeb orbits
grows exponentially with time. This result however is not new; it
follows from an observation of Seidel \cite[Section 4a]{Seidel2006}
that the \textbf{growth rate }of \textbf{symplectic homology }is invariant
under \textbf{Liouville isomorphism}. We refer to \cite{Seidel2006}
for a definition of these terms, and for an explanation as to why
this yields a proof of the fact above. We emphasize however that whilst
the case $\varphi\equiv\mathbb{1}$ can be proved much more easily
using symplectic homology, it does not appear possible to attack the
leaf-wise intersection problem with symplectic homology; at the moment
Rabinowitz Floer homology seems to be the most effective way of dealing
with these types of problems.\end{rem}
\begin{acknowledgement*}
We are grateful to Peter Albers, Urs Frauenfelder and Alex Ritter
for several helpful discussions and suggestions. We are also grateful
to Irida Altman for help with constructing Figure \ref{fig:The-function}.
\end{acknowledgement*}

\section{Preliminaries}

\subsection{Sign conventions}

$\ $\vspace{6 pt}

For the convenience of the reader we begin by gathering together the
various sign conventions we use. Let $M$ denote a closed connected
orientable $n$-dimensional manifold. Let $\pi:T^{*}M\rightarrow M$
denote the foot point map.
\begin{itemize}
\item We use the symplectic form $\omega=d\lambda$ on $T^{*}M$, where
$\lambda=pdq$ is the \textbf{Liouville $1$-form}. We will denote
by $Y=p\partial_{p}$ the \textbf{Liouville vector field}, which is
the unique vector field satisfying $i_{Y}\omega=\lambda$.
\item We denote by $\Lambda M$ and $\Lambda T^{*}M$ the \textbf{free loop
spaces }on $M$ and $T^{*}M$ respectively: \[
\Lambda M:=C^{\infty}(S^{1},M),\ \ \ \Lambda T^{*}M:=C^{\infty}(S^{1},T^{*}M).
\]
We denote by $\widetilde{\Lambda}M$ and $\widetilde{\Lambda}T^{*}M$
the completions of these spaces with respect to the Sobolev $W^{1,2}$
norm. Given $\alpha\in[S^{1},M]$, we denote by \[
\Lambda_{\alpha}M:=\left\{ q\in\Lambda M\,:\,[q]=\alpha\right\} ;
\]
\[
\Lambda_{\alpha}T^{*}M:=\left\{ x\in\Lambda T^{*}M\,:\,[\pi\circ x]=\alpha\right\} .
\]

\item An almost complex structure $J$ on $T^{*}M$ is \textbf{compatible}\emph{
}with $\omega$ if $\omega(J\cdot,\cdot)$ defines a Riemannian metric
on $T^{*}M$. We denote by $\mathcal{J}$ the set of time-dependent
almost complex structures $J=(J_{t})_{t\in S^{1}}$ such that each
$J_{t}$ is compatible with $\omega$. 
\item Given $J\in\mathcal{J}$ we denote by $\left\langle \left\langle \cdot,\cdot\right\rangle \right\rangle _{J}$
the $L^{2}$ inner product on $\Lambda T^{*}M\times\mathbb{R}$ defined
by\begin{equation}
\left\langle \left\langle (\xi,b),(\xi',b')\right\rangle \right\rangle _{J}:=\int_{0}^{1}\omega(J\xi,\xi')dt+bb'.\label{eq:metric Jg}
\end{equation}

\item Given a Riemannian metric $g$ on $M$ we denote by $\left\langle \left\langle \cdot,\cdot\right\rangle \right\rangle _{g}$
the $W^{1,2}$ metric on $\widetilde{\Lambda}M\times\mathbb{R}$ defined
by \[
\left\langle \left\langle (\zeta,b),(\zeta',b')\right\rangle \right\rangle _{g}:=\left\langle \zeta(0),\zeta'(0)\right\rangle +\int_{0}^{1}\left\langle \nabla_{t}\zeta,\nabla_{t}\zeta'\right\rangle dt+bb'.
\]

\item In this paper $F$ will always denote an \textbf{autonomous }Hamiltonian
$F:T^{*}M\rightarrow\mathbb{R}$, whereas $H$ will always denote
a \textbf{time-dependent }Hamiltonian $H:S^{1}\times T^{*}M\rightarrow\mathbb{R}$.
\item The symplectic gradient $X_{F}$ of a smooth function $F:T^{*}M\rightarrow\mathbb{R}$
is defined by $i_{X_{F}}\omega=-dF$.
\item Floer homology is defined using \textbf{negative}\emph{ }gradient
flow lines of the Rabinowitz action functional $A_{\mathfrak{f}}$.
\item The notation $H(X,A)$ for the singular homology of a pair $(X,A)$
should always be understood as shorthand for $H(X,A;\mathbb{Z}_{2})$.
\item We denote by $\mathbb{R}^{+}:=\left\{ \eta\in\mathbb{R}\,:\,\eta>0\right\} $.
\item \textbf{All sign conventions in this paper agree with the ones in
\cite{AbbondandoloSchwarz2009}.}
\end{itemize}

\subsection{Preliminaries on fibrewise starshaped hypersurfaces}

$\ $\vspace{6 pt}

We begin by defining our central objects of interest.
\begin{defn}
A submanifold $\Sigma^{2n-1}\subseteq T^{*}M$ is called a \textbf{fibrewise
starshaped hypersurface }if $\Sigma$ is a closed connected separating
hypersurface with the property that the Liouville vector field $Y$
is transverse to $\Sigma$ and points in the outward direction. This
is equivalent to asking that $\lambda_{\Sigma}:=\lambda|_{\Sigma}$
is a positive contact form on $\Sigma$. Given a fibrewise starshaped
hypersurface $\Sigma$, we denote by $R_{\Sigma}$ the \textbf{Reeb
vector field}\emph{ }of the contact $1$-form $\lambda_{\Sigma}$,
that is, the unique vector field on $\Sigma$ defined by the equations
$\lambda_{\Sigma}(R_{\Sigma})=1$ and $i_{R_{\Sigma}}d\lambda_{\Sigma}=0$.
Denote by $D(\Sigma)$ the compact region of $T^{*}M$ bounded by
$\Sigma$, and $D^{\circ}(\Sigma):=\mbox{int}(D(\Sigma))$. 
\end{defn}
Another way to think about such hypersurfaces is the following. Fix
a metric $g$ on $M$, and denote by $S_{g}^{*}M$ the unit cotangent
bundle of $(M,g)$. Then a hypersurface $\Sigma\subseteq T^{*}M$
is fibrewise starshaped if and only if there exists a smooth function
$\sigma:S_{g}^{*}M\rightarrow\mathbb{R}^{+}$ such that \begin{equation}
\Sigma=\mbox{graph}(\sigma)=\left\{ (q,\sigma(q,p))\,:\,(q,p)\in S_{g}^{*}M\right\} .\label{eq:s star m sigma}
\end{equation}

\begin{defn}
Given a fibrewise starshaped hypersurface $\Sigma\subseteq T^{*}M$,
let $\mathcal{D}(\Sigma)\subseteq C^{\infty}(T^{*}M,\mathbb{R})$
denote the set of all autonomous Hamiltonians $F:T^{*}M\rightarrow\mathbb{R}$
such that $F^{-1}(0)=\Sigma$, $X_{F}$ is compactly supported, and
such that $X_{F}|_{\Sigma}=R_{\Sigma}$. We call such Hamiltonians
\textbf{defining Hamiltonians}\emph{ }for $\Sigma$. Let \[
\mathcal{D}:=\bigcup_{\Sigma}\mathcal{D}(\Sigma),
\]
where the union is over all fibrewise starshaped hypersurfaces $\Sigma\subseteq T^{*}M$.
\end{defn}
Given a fibrewise starshaped hypersurface $\Sigma\subseteq T^{*}M$,
denote by $\mathcal{P}(\Sigma)$ the set of \textbf{Reeb orbits}\emph{
}of $R_{\Sigma}$:\[
\mathcal{P}(\Sigma):=\left\{ (x,T)\in\Lambda T^{*}M\times\mathbb{R}^{+}\,:\, x(S^{1})\subseteq\Sigma,\,\dot{x}=TR_{\Sigma}(x)\right\} .
\]
Given $\alpha\in[S^{1},M]$ let \[
\mathcal{P}(\Sigma,\alpha):=\left\{ (x,T)\in\mathcal{P}(\Sigma)\,:\,[\pi\circ x]=\alpha\right\} .
\]
Denote by $\mathcal{A}(\Sigma)$ the \textbf{action spectrum}\emph{
}of $\Sigma$:\[
\mathcal{A}(\Sigma):=\left\{ T\in\mathbb{R}^{+}\,:\,\exists\,(x,T)\in\mathcal{P}(\Sigma)\right\} ;
\]
\[
\mathcal{A}(\Sigma,\alpha):=\left\{ T\in\mathbb{R}^{+}\,:\,\exists\,(x,T)\in\mathcal{P}(\Sigma,\alpha)\right\} ,
\]
and set\[
\ell(\Sigma):=\inf\,\mathcal{A}(\Sigma),\ \ \ \ell(\Sigma,\alpha):=\inf\,\mathcal{A}(\Sigma,\alpha).
\]
Note that $\ell(\Sigma)>0$ for any fibrewise starshaped hypersurface.
\begin{rem}
\label{rem:The-action-spectru}The action spectrum is a closed nowhere
dense subset of $\mathbb{R}$ \cite[Proposition 3.7]{Schwarz2000}.
Moreover it varies {}``lower-semicontinuously'' with respect to
$\Sigma$ in the following sense. Suppose $\Sigma$ is given by the
graph of a smooth function $\sigma:S_{g}^{*}M\rightarrow\mathbb{R}^{+}$,
where $S_{g}^{*}M$ is the unit cotangent bundle of $M$ with respect
to some metric $g$ on $M$ (see \eqref{eq:s star m sigma}). Then
given any neighborhood $\mathcal{V}\subseteq\mathbb{R}$ of $\mathcal{A}(\Sigma)$
there exists a neighborhood $\mathcal{U}\subseteq C^{\infty}(S_{g}^{*}M,\mathbb{R}^{+})$
of $\sigma$ (where the later space is equipped with the strong Whitney
$C^{\infty}$-topology) such that if $\widetilde{\sigma}\in\mathcal{U}$
then the fibrewise starshaped hypersurface $\widetilde{\Sigma}$ defined
as the graph of $\widetilde{\sigma}$ satisfies $\mathcal{A}(\widetilde{\Sigma})\subseteq\mathcal{V}$.
See \cite[Lemma 3.1]{CieliebakGinzburgKerman2004}.
\end{rem}
The non-degeneracy assumption we will make is the following:
\begin{defn}
\label{def:non degen}We say a pair $(x,T)\in\mathcal{P}(\Sigma)$
is \textbf{transversely non-degenerate}\emph{ }if $1$ is not an eigenvalue
of the restriction of $d_{x(0)}\phi_{T}^{R_{\Sigma}}$ to the contact
hyperplane $\ker(\lambda_{\Sigma}(x(0)))\subseteq T_{x(0)}\Sigma$.
We say that $\Sigma$ is \textbf{non-degenerate} if every element
of $\mathcal{P}(\Sigma)$ is transversely non-degenerate. 
\end{defn}
Non-degeneracy is a generic property, in the following sense.
\begin{thm}
\label{thm:nondeg is generic} Fix a metric $g$ on $M$, and let
$S_{g}^{*}M$ denote the unit cotangent bundle of $(M,g)$. The subset
of $C^{\infty}(S_{g}^{*}M,\mathbb{R}^{+})$ consisting of those smooth
functions $\sigma:S_{g}^{*}M\rightarrow\mathbb{R}^{+}$ with the property
that the corresponding fibrewise starshaped hypersurface $\Sigma$
defined by the graph of $\sigma$ (see \eqref{eq:s star m sigma})
is non-degenerate, is residual in $C^{\infty}(S_{g}^{*}M,\mathbb{R}^{+})$.
\end{thm}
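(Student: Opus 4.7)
The plan is to prove this by a standard Sard--Smale/Baire category argument, treating $\sigma$ as a parameter in a universal moduli space of closed Reeb orbits. For each $T>0$, let $\mathcal{R}_T\subseteq C^\infty(S_g^*M,\mathbb{R}^+)$ denote the set of functions $\sigma$ for which every $(x,\tau)\in\mathcal{P}(\Sigma_\sigma)$ with $\tau\le T$ is transversely non-degenerate (here $\Sigma_\sigma$ is the graph of $\sigma$). The theorem's conclusion is then the intersection $\bigcap_{k\in\mathbb{N}}\mathcal{R}_k$, so by Baire it suffices to show that each $\mathcal{R}_T$ is open and dense.

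For openness, note that if $\sigma_0\in\mathcal{R}_T$ then the set of Reeb orbits of $\Sigma_{\sigma_0}$ of period at most $T$ is finite. Each transversely non-degenerate orbit persists under $C^2$-small perturbation of $\sigma$ by the implicit function theorem (applied to the equation $\phi_\tau^{R_{\Sigma_\sigma}}(x(0))=x(0)$ on the contact distribution), and by the lower-semicontinuity of the action spectrum (Remark \ref{rem:The-action-spectru}) no new short orbits appear nearby. Hence $\mathcal{R}_T$ contains a $C^\infty$-open neighbourhood of $\sigma_0$.

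For density, the strategy is to introduce, for each prescribed period $T$, the universal moduli space
\[
\mathcal{M}_T:=\bigl\{(\sigma,x,\tau)\ :\ (x,\tau)\in\mathcal{P}(\Sigma_\sigma),\ \tau\le T\bigr\}
\]
and show it is a smooth Banach manifold after passing to $C^k$-regularity and restricting attention to orbits that are \emph{not} iterates of shorter ones (the iteration problem is treated separately via the standard Bott-iteration formula relating the linearized return map of a $k$-fold cover to the $k$-th power of that of the primitive). The projection $\pi:\mathcal{M}_T\to C^k(S_g^*M,\mathbb{R}^+)$ is then Fredholm of index zero, and a parameter $\sigma$ is a regular value precisely when every primitive orbit of period $\le T$ is transversely non-degenerate. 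Sard--Smale yields a residual set of regular values in $C^k$; density in $C^\infty$ follows from the classical Taubes trick of taking an increasing exhaustion by $C^k$-open, $C^\infty$-dense sets.

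The main obstacle is verifying that $\mathcal{M}_T$ is a smooth manifold at every point, i.e.\ that the linearization in the $(\sigma,x,\tau)$ directions of the defining equation is surjective. This amounts to the following infinitesimal statement: given a closed Reeb orbit $x$ on $\Sigma_\sigma$ and a nonzero vector $v$ in the kernel of $d\phi_\tau^{R_\Sigma}-\mathrm{id}$ on the contact hyperplane at $x(0)$, there exists a variation $\delta\sigma\in C^\infty(S_g^*M,\mathbb{R})$ such that the corresponding first-order variation of the linearized return map pairs nontrivially with $v$. This is where the geometry enters: a perturbation $\delta\sigma$ supported near a single point of $\pi(x)\subseteq\Sigma_\sigma$ perturbs the Reeb vector field $R_\Sigma$ in a two-parameter family of ways transverse to the flow (because $\lambda_\Sigma=\lambda|_\Sigma$ changes nontrivially under radial deformation of $\Sigma$), which gives enough freedom in choosing $\delta\sigma$. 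Combined with the absence of self-intersections of an embedded primitive orbit, this gives the required surjectivity and completes the proof.
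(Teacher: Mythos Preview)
The paper does not actually prove this theorem: immediately after the statement it simply refers the reader to \cite[Proposition 6.1]{HoferWysockiZehnder1998}, \cite[Lemma 2.1]{Bourgeois2009}, and \cite[Appendix B]{CieliebakFrauenfelder2009}. Your sketch follows exactly the Sard--Smale transversality scheme employed in those references (universal moduli space of closed Reeb orbits, Fredholm projection to the parameter space, openness via the implicit function theorem and Arzela--Ascoli), so at the level of strategy there is nothing to contrast.

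A couple of details in your sketch would need sharpening in a full proof. The phrase ``two-parameter family of ways transverse to the flow'' is inaccurate: the contact hyperplane is $(2n-2)$-dimensional, and the key point is that localized perturbations of $\sigma$ generate the full $(2n-2)$-dimensional space of transverse perturbations of $R_\Sigma$ along the orbit. Also, your handling of multiple covers is incomplete: knowing that the return map of a $k$-fold cover is $A^k$ (which is trivially true, not a ``Bott iteration formula'') reduces non-degeneracy of the cover to $A$ having no $k$-th roots of unity as eigenvalues, but guaranteeing this for generic $\sigma$ requires a further transversality argument (perturbing the \emph{derivative} of $R_\Sigma$ along the primitive orbit, i.e.\ using second-order jets of $\sigma$), which you have not supplied. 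These points are standard and are dealt with in the cited references, so the overall outline is sound.
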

See \cite[Proposition 6.1]{HoferWysockiZehnder1998}, \cite[Lemma 2.1]{Bourgeois2009}
or \cite[Appendix B]{CieliebakFrauenfelder2009} for a proof of Theorem
\ref{thm:nondeg is generic}.

\section{$\mathcal{F}$-Rabinowitz Floer homology}

\subsection{The Rabinowitz action functional}

$\ $\vspace{6 pt}

We now define the (variant of the) Rabinowitz action functional that
we will use. Before doing so, we introduce the following convention.
Given an autonomous Hamiltonian $F\in C^{\infty}(T^{*}M,\mathbb{R})$
and a function $\chi\in C^{\infty}(S^{1},[0,1])$, we define $F^{\chi}:S^{1}\times T^{*}M\rightarrow\mathbb{R}$
by \[
F^{\chi}(t,x):=\chi(t)F(x).
\]

\begin{defn}
Fix $F\in C^{\infty}(T^{*}M,\mathbb{R})$, $f\in C^{\infty}(\mathbb{R},\mathbb{R})$
and $\chi\in C^{\infty}(S^{1},[0,\infty))$. The \textbf{Rabinowitz
action functional }associated to the triple $(F,f,\chi)$ is the functional\textbf{
}\[
A_{F^{\chi},f}:\Lambda T^{*}M\times\mathbb{R}\rightarrow\mathbb{R}
\]
defined by \[
A_{F^{\chi},f}(x,\eta):=\int x^{*}\lambda-f(\eta)\int_{0}^{1}F^{\chi}(t,x)dt.
\]
Suppose now $H\in C^{\infty}(S^{1}\times T^{*}M,\mathbb{R})$. The
\textbf{perturbed Rabinowitz action functional }associated to the
quadruple $(F,f,\chi,H)$ is the functional\[
A_{F^{\chi},f}^{H}:\Lambda T^{*}M\times\mathbb{R}\rightarrow\mathbb{R}
\]
 defined by \[
A_{F^{\chi},f}^{H}(x,\eta):=\int x^{*}\lambda-f(\eta)\int_{0}^{1}F^{\chi}(t,x)dt-\int_{0}^{1}H(t,x)dt.
\]
Thus $A_{F^{\chi},f}$ corresponds to the trivial perturbation $H=0$. 
\end{defn}
Although in principle we could use any functions $F,f,\chi,H$ in
the definition above, the definition only becomes interesting when
we restrict the class of functions we consider. Firstly, we will only
ever use functions $F\in\mathcal{D}$; in particular they will always
be constant outside a compact set%
\footnote{At least until Section \ref{sec:The-convex-case}, that is.%
}. Here is the definition of the class of functions $f$ we will study.
\begin{defn}
Let $\mathcal{F}\subseteq C^{\infty}(\mathbb{R},\mathbb{R}^{+})$
denote the set of smooth strictly positive functions $f:\mathbb{R}\rightarrow\mathbb{R}^{+}$
that are strictly increasing, satisfy $\lim_{\eta\rightarrow-\infty}f(\eta)=0$,
and are such that the derivative $f'$ satisfies $0<f'(\eta)\leq1$
for all $\eta\in\mathbb{R}$.\end{defn}
\begin{rem}
The reason for considering functions $f$ of the following form is
to be able to define continuation maps in Rabinowitz Floer homology
for monotone homotopies. This will be explained in Section \ref{sub:Monotone-homotopies-and},
see Remark \ref{The-innocent-looking} in particular. The idea of
perturbing the Rabinowitz action functional with such an auxiliary
function is not new. For instance, in \cite{CieliebakFrauenfelderOancea2010}
a similar idea was used; there however they used functions $f\in C^{\infty}(\mathbb{R},\mathbb{R})$
that were of the form \[
f(\eta)=\begin{cases}
\eta & \left|\eta\right|\leq R-\varepsilon\\
R & \left|\eta\right|\geq R
\end{cases}
\]
for some $R>\varepsilon>0$. They used these (and other more general)
perturbations in order to find the link between Rabinowitz Floer homology
and symplectic homology.
\end{rem}
Next, we will only ever take $\chi$ to lie in a certain subset $\mathcal{X}$
of $C^{\infty}(S^{1},[0,\infty))$. In order to define $\mathcal{X}$,
let us first associate to any element $\chi\in C^{\infty}(S^{1},[0,\infty))$
the function $\bar{\chi}:[0,1]\rightarrow[0,\infty)$ defined by \[
\bar{\chi}(t):=\int_{0}^{t}\chi(\tau)d\tau.
\]
Let $\mathcal{X}\subseteq C(S^{1},[0,\infty))$ denote those functions
$\chi$ whose associated function $\bar{\chi}$ satisfies the following
conditions: 
\begin{enumerate}
\item There exists $t_{0}=t_{0}(\chi)\in(0,1]$ such that $\bar{\chi}(t)\equiv1$
on $[t_{0},1]$;
\item On $[0,t_{0}]$ the function $\bar{\chi}$ is strictly increasing.
\end{enumerate}
Note that the function $\chi\equiv1$ is an element of $\mathcal{X}$.
It will sometimes be useful to restrict to the following subset $\mathcal{X}_{0}\subseteq\mathcal{X}$:
\[
\mathcal{X}_{0}:=\left\{ \chi\in\mathcal{X}\,:\, t_{0}(\chi)<1/2\right\} .
\]

\begin{rem}
\label{nu}Note that if $\chi\in\mathcal{X}$ then there is a unique
function $\nu:[0,1)\rightarrow[0,t_{0})$ such that \[
\bar{\chi}(\nu(t))=t\ \ \ \mbox{for all }t\in[0,1).
\]
One can extend $\nu$ to a continuous function $\nu:[0,1]\rightarrow[0,t_{0}]$
by setting $\nu(1):=t_{0}$. 
\end{rem}
Finally, here is the definition of the class of functions $H$ we
will use.
\begin{defn}
Let $\mathcal{H}$ denote the set of compactly supported time-dependent
Hamiltonians $H\in C_{c}^{\infty}(S^{1}\times T^{*}M,\mathbb{R})$
which have the additional property that $H(t,\cdot)\equiv0$ for $t\in[0,1/2]$. 
\end{defn}
It is easy to see that given any $\varphi\in\mbox{Ham}_{c}(T^{*}M,\omega)$
we can find $H\in\mathcal{H}$ such that $\varphi=\phi_{1}^{H}$ \cite[Lemma 2.3]{AlbersFrauenfelder2010c}.
Note that the function $H\equiv0$ is in $\mathcal{H}$.\newline

In order to ease the notation, let us write \[
\mathfrak{F}:=\mathcal{D}\times\mathcal{F}\times\mathcal{X}\times\mathcal{H},
\]
and refer to elements of $\mathfrak{F}$ by the single letter $\mathfrak{f}$.
Given $\mathfrak{f}=(F,f,\chi,H)\in\mathfrak{F}$, we will often (but
not always) write $A_{\mathfrak{f}}$ as shorthand for the perturbed
Rabinowitz action functional $A_{F^{\chi},f}^{H}$. In fact, most
of the time we will work only with a subset $\mathfrak{F}_{0}\subseteq\mathfrak{F}$.
Let \[
\mathfrak{F}_{0}':=\mathcal{D}\times\mathcal{F}\times\mathcal{X}\times\{0\};
\]
\[
\mathfrak{F}_{0}'':=\mathcal{D}\times\mathcal{F}\times\mathcal{X}_{0}\times\mathcal{H};
\]
\[
\mathfrak{F}_{0}:=\mathfrak{F}_{0}'\cup\mathfrak{F}_{0}''.
\]
In other words, an element $\mathfrak{f}\in\mathfrak{F}$ lies in
$\mathfrak{F}_{0}$ if and only if either $H=0$ or $\chi\in\mathcal{X}_{0}$.\newline

Let $\mathfrak{f}\in\mathfrak{F}$. One readily checks that a pair
$(x,\eta)\in\Lambda T^{*}M\times\mathbb{R}$ is a critical point of
$A_{\mathfrak{f}}$ if and only if\begin{equation}
\begin{cases}
\dot{x}=f(\eta)\chi(t)X_{F}(x)+X_{H}(t,x),\\
f'(\eta)\int_{0}^{1}\chi(t)F(x)dt=0.
\end{cases}\label{eq:first eq}
\end{equation}
Since $f'>0$ everywhere, these equations are equivalent to\begin{equation}
\begin{cases}
\dot{x}=f(\eta)\chi(t)X_{F}(x)+X_{H}(t,x),\\
\int_{0}^{1}\chi(t)F(x)dt=0.
\end{cases}\label{eq:sec eq}
\end{equation}
In particular, if $H=0$ then since $F$ is autonomous, these equations
become:\begin{equation}
\begin{cases}
\dot{x}=f(\eta)\chi(t)R_{\Sigma}(x),\\
x(S^{1})\subseteq\Sigma,
\end{cases}\label{eq:third eq}
\end{equation}
where $F\in\mathcal{D}(\Sigma)$. Given $-\infty\leq a\leq b\leq\infty$,
denote by $\mbox{Crit}^{(a,b)}(A_{\mathfrak{f}})$ the set of critical
points $(x,\eta)\in\Lambda T^{*}M\times\mathbb{R}$ with $A_{\mathfrak{f}}(x,\eta)\in(a,b)$.
Write simply $\mbox{Crit}(A_{\mathfrak{f}})$ instead of $\mbox{Crit}^{(-\infty,\infty)}(A_{\mathfrak{f}})$.
Similarly denote by $\mathcal{A}(A_{\mathfrak{f}}):=A_{\mathfrak{f}}(\mbox{Crit}(A_{\mathfrak{f}}))$
the\emph{ }\textbf{action spectrum}\emph{ }of $A_{\mathfrak{f}}$.
Given $\alpha\in[S^{1},M]$, let $\mbox{Crit}^{(a,b)}(A_{\mathfrak{f}},\alpha):=\mbox{Crit}^{(a,b)}(A_{\mathfrak{f}})\cap(\Lambda_{\alpha}T^{*}M\times\mathbb{R})$
and $\mathcal{A}(A_{\mathfrak{f}},\alpha):=A_{\mathfrak{f}}(\mbox{Crit}(A_{\mathfrak{f}},\alpha))$.\newline

Given $\varphi\in\mbox{Ham}_{c}(T^{*}M,\omega)$ and a fibrewise starshaped
hypersurface $\Sigma$, let \[
\mbox{LW}_{+}(\Sigma,\varphi):=\left\{ p\in\Sigma\,:\, p\mbox{ is a positive leaf-wise intersection point for }\varphi\right\} .
\]
The following lemma explains the advantage of choosing $\mathfrak{f}\in\mathfrak{F}_{0}$. 
\begin{lem}
\label{lem:key lemma}\cite{CieliebakFrauenfelder2009,AlbersFrauenfelder2008}
\begin{enumerate}
\item Suppose $\mathfrak{f}=(F,f,\chi,0)\in\mathfrak{F}_{0}'$, with $F\in\mathcal{D}(\Sigma)$.
Let $\nu:[0,1]\rightarrow[0,t_{0}]$ denote the function defined in
Remark \ref{nu}. Then $(x,\eta)\in\mbox{\emph{Crit}}(A_{\mathfrak{f}})$
if and only if $(x\circ\nu,f(\eta))\in\mathcal{P}(\Sigma)$. Moreover
in this case \[
A_{\mathfrak{f}}(x,\eta)=f(\eta)>0.
\]

\item Now suppose $\mathfrak{f}=(F,f,\chi,H)\in\mathfrak{F}_{0}''$ with
$F\in\mathcal{D}(\Sigma)$. Let $\varphi:=\phi_{1}^{H}$. Then there
is a surjective map \[
e_{\mathfrak{f}}:\mbox{\emph{Crit}}(A_{\mathfrak{f}})\rightarrow\mbox{\emph{LW}}_{+}(\Sigma,\varphi)
\]
given by \[
e_{\mathfrak{f}}(x,\eta):=x(0).
\]
If the leaf $\mathcal{L}_{x(0)}$ is not closed then $x(0)$ has time-shift
$f(\eta)$. If there are no periodic leaf-wise intersection points
then $e_{\mathfrak{f}}$ is injective. Moreover if $(x,\eta)\in\mbox{\emph{Crit}}(A_{\mathfrak{f}})$
then: \begin{equation}
A_{\mathfrak{f}}(x,\eta)=f(\eta)-\int_{0}^{1}\{\lambda(X_{H}(t,x))-H(t,x)\}dt.\label{eq:value of perturbed functional at crit}
\end{equation}

\end{enumerate}
\end{lem}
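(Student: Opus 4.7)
The plan is to read off both parts of the lemma directly from the Euler--Lagrange equations \eqref{eq:sec eq} by exploiting the three-interval structure of the data $(\chi,H)$.

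For part (1), fix $\mathfrak{f}=(F,f,\chi,0)\in\mathfrak{F}_{0}'$. Equation \eqref{eq:third eq} says that a critical point $(x,\eta)$ satisfies $x(S^{1})\subseteq\Sigma$ and $\dot x(t)=f(\eta)\chi(t)R_{\Sigma}(x(t))$. Since $\chi\equiv 0$ on $[t_{0},1]$, $x$ is constant there, while on $[0,t_{0}]$ it is a reparametrized Reeb trajectory. The time change $s=\bar\chi(t)$ via the function $\nu$ of Remark \ref{nu} produces a loop $y:=x\circ\nu$ on $S^{1}$ satisfying $\dot y=f(\eta)R_{\Sigma}(y)$, so $(y,f(\eta))\in\mathcal{P}(\Sigma)$; conversely every $(y,T)\in\mathcal{P}(\Sigma)$ lifts to a critical point via $x:=y\circ\bar\chi$, $\eta:=f^{-1}(T)$. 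The middle term of $A_{\mathfrak{f}}$ vanishes because $F(x)\equiv 0$, and $\int x^{*}\lambda=\int_{0}^{1}f(\eta)\chi(t)\lambda_{\Sigma}(R_{\Sigma})\,dt=f(\eta)$ since $\lambda_\Sigma(R_\Sigma)\equiv 1$ and $\bar\chi(1)=1$.

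For part (2), fix $\mathfrak{f}=(F,f,\chi,H)\in\mathfrak{F}_{0}''$; then $t_{0}<1/2$ and $X_{H}$ vanishes on $[0,1/2]$, so $[0,1]$ splits into three regimes on which \eqref{eq:sec eq} is simple. On $[0,t_{0}]$ one has $\dot x=f(\eta)\chi X_{F}(x)$; since $dF(X_{F})=0$ the value $F(x(t))$ is constant on this interval, and the Lagrange multiplier constraint $\int_{0}^{1}\chi F(x)\,dt=0$ together with $\int_{0}^{t_{0}}\chi\,dt=1$ forces this constant to be $0$. Hence $x|_{[0,t_{0}]}$ is a genuine Reeb trajectory with $x(t_{0})=\phi^{\Sigma}_{f(\eta)}(x(0))$. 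On $[t_{0},1/2]$ both $\chi$ and $H$ vanish, so $x$ is stationary; set $p:=x(t_{0})=x(1/2)\in\Sigma$. On $[1/2,1]$ only $X_{H}$ is active, and since $\phi^{H}_{1/2}=\mathrm{id}$, one obtains $x(1)=\phi^{H}_{1}(p)=\varphi(p)$. Closing the loop via $x(0)=x(1)$ yields
\[
\varphi\bigl(\phi^{\Sigma}_{f(\eta)}(x(0))\bigr)=x(0),
\]
so $e_{\mathfrak{f}}(x,\eta)=x(0)\in\Sigma$ is a positive leaf-wise intersection point with time-shift $f(\eta)>0$.

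Conversely, reversing this construction turns any positive leaf-wise intersection point $p_{0}$ into a critical point of $A_{\mathfrak{f}}$ mapping to $p_{0}$, giving surjectivity. When $\varphi$ admits no periodic leaf-wise intersection points, the time-shift attached to $p_{0}$ is uniquely determined; strict monotonicity of $f$ then pins down $\eta$, and the explicit three-piece description recovers $x$ from $(x(0),\eta)$, so $e_{\mathfrak{f}}$ is injective. Finally, the action identity \eqref{eq:value of perturbed functional at crit} follows by splitting $\int x^{*}\lambda$ over the three subintervals: the first contributes $f(\eta)$ as in part (1), the middle contributes $0$, and the third contributes $\int_{0}^{1}\lambda(X_{H}(t,x))\,dt$ since $X_{H}$ is supported in $[1/2,1]$; combined with the vanishing Lagrange multiplier term and $\int_{0}^{1}H(t,x)\,dt=\int_{1/2}^{1}H(t,x)\,dt$, this yields the stated formula. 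The one slightly delicate step is concluding $F(x)\equiv 0$ on $[0,t_{0}]$, where we must combine the pointwise conservation along $X_{F}$ with the integral constraint; once this is in place everything reduces to careful bookkeeping over the three subintervals.
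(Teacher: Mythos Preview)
Your argument is correct and complete. The paper does not actually supply its own proof of this lemma; it simply cites \cite{CieliebakFrauenfelder2009,AlbersFrauenfelder2008} and moves on. Your three-interval decomposition $[0,t_{0}]\cup[t_{0},1/2]\cup[1/2,1]$ is exactly the mechanism those references use, and your identification of the ``slightly delicate'' step---combining conservation of $F$ along $X_{F}$ with the integral constraint $\int\chi F(x)\,dt=0$ to force $F(x)\equiv 0$ on $[0,t_{0}]$---is the one genuinely nontrivial point in the argument.

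One small remark: your computation of the action in part~(2) actually gives
\[
A_{\mathfrak{f}}(x,\eta)=f(\eta)+\int_{0}^{1}\{\lambda(X_{H}(t,x))-H(t,x)\}\,dt,
\]
with a plus sign rather than the minus sign printed in \eqref{eq:value of perturbed functional at crit}. Your derivation is the correct one; the sign in the displayed formula appears to be a typo in the paper. This is harmless for everything downstream, since only the estimate $|A_{\mathfrak{f}}(x,\eta)-f(\eta)|\le\kappa(H)$ is ever used (cf.\ Lemma~\ref{lem:action bounds lemma} and the proof of Proposition~\ref{pro:useful dichotomy}).
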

Let $\mathfrak{f}=(F,f,\chi,H)$ be as in part (2) of the previous
lemma. As stated in the Introduction, we want to be able to associate
to a leaf-wise intersection point $p\in\mbox{LW}_{+}(\Sigma,\varphi)$
a free homotopy class $\alpha\in[S^{1},M]$. It is natural to define
\[
\mbox{LW}_{+}(\Sigma,\varphi,\alpha):=e_{\mathfrak{f}}(\mbox{Crit}(A_{\mathfrak{f}},\alpha)).
\]

The following lemma, based on a well known argument (see for example
\cite[Proposition 3.1]{Schwarz2000}) implies that $\mbox{LW}_{+}(\Sigma,\varphi,\alpha)$
is well defined. 
\begin{lem}
\label{lem:Independence lemma}Suppose $\Sigma$ is a fibrewise starshaped
hypersurface and $\varphi\in\mbox{\emph{Ham}}_{c}(T^{*}M,\omega)$.
Suppose $H_{0},H_{1}\in\mathcal{H}$ both generate $\varphi$. Let
$F\in\mathcal{D}(\Sigma)$, $f\in\mathcal{F}$ and $\chi\in\mathcal{X}_{0}$.
Set $\mathfrak{f}_{i}:=(F,f,\chi,H_{i})\in\mathfrak{F}_{0}''$ for
$i=0,1$. Fix $p\in\mbox{\emph{LW}}_{+}(\Sigma,\varphi)$ and $\alpha\in[S^{1},M]$.
Then there exists $(x_{0},\eta_{0})\in\mbox{\emph{Crit}}(A_{\mathfrak{f}_{0}},\alpha)$
such that $e_{\mathfrak{f}_{0}}(x_{0},\eta_{0})=p$ if and only if
there exists $(x_{1},\eta_{1})\in\mbox{\emph{Crit}}(A_{\mathfrak{f}_{1}},\alpha)$
such that $e_{\mathfrak{f}_{1}}(x_{1},\eta_{1})=p$. \end{lem}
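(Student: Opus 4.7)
The plan has two parts: first construct a critical point $(x_1,\eta_1)$ of $A_{\mathfrak{f}_1}$ with $e_{\mathfrak{f}_1}(x_1,\eta_1)=p$, then verify it lies in the free homotopy class $\alpha$. By symmetry it suffices to pass from $\mathfrak{f}_0$ to $\mathfrak{f}_1$.

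Given $(x_0,\eta_0)\in\mbox{Crit}(A_{\mathfrak{f}_0},\alpha)$ with $e_{\mathfrak{f}_0}(x_0,\eta_0)=p$, Lemma \ref{lem:key lemma}(2) says the time-shift of $p$ is $\eta^{\star}:=f(\eta_0)$. I set $\eta_1:=\eta_0$, so $f(\eta_1)=\eta^{\star}$, and define
\[
x_1(t) := \begin{cases}\phi^\Sigma_{\eta^{\star}\bar\chi(t)}(p), & t\in[0,t_0],\\ \phi^\Sigma_{\eta^{\star}}(p), & t\in[t_0,1/2],\\ \phi^{H_1}_t(\phi^\Sigma_{\eta^{\star}}(p)), & t\in[1/2,1].\end{cases}
\]
The piecewise formula is continuous at $t=1/2$ since $H_1$ vanishes on $[0,1/2]$, and it defines a loop since $\phi^{H_1}_1=\varphi$ satisfies $\varphi(\phi^\Sigma_{\eta^{\star}}(p))=p$. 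A direct check of \eqref{eq:sec eq} confirms $(x_1,\eta_1)\in\mbox{Crit}(A_{\mathfrak{f}_1})$ and $e_{\mathfrak{f}_1}(x_1,\eta_1)=p$.

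The nontrivial step is the free homotopy identity $[\pi\circ x_1]=[\pi\circ x_0]$. Observe that $x_0,x_1$ agree on $[0,1/2]$ and, on $[1/2,1]$, equal the Hamiltonian trajectories $\gamma_i(t):=\phi^{H_i}_t(q)$, where $q:=\phi^\Sigma_{\eta^{\star}}(p)$; both run from $q$ to $p$. I introduce the convex homotopy $H^s:=(1-s)H_0+sH_1\in\mathcal{H}$ (the defining condition $H\equiv 0$ on $[0,1/2]$ is preserved under convex combinations), set $\psi_s:=\phi^{H^s}_1$, and consider the two-parameter map $\Gamma(s,t):=\phi^{H^s}_t(q)$ on $[0,1]\times[1/2,1]$. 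Its boundary consists of $\gamma_0,\gamma_1$ on the vertical sides, the constant $q$ on the bottom, and the curve $L(s):=\psi_s(q)$ on the top. Since $\psi_0=\psi_1=\varphi$, $L$ is a loop at $p$ in $T^*M$, and $\Gamma$ null-homotopes the loop $\gamma_0*L*\gamma_1^{-1}$ based at $q$. Conjugating by the Reeb path shared by $x_0$ and $x_1$, this yields $[x_1]=[x_0]\cdot[L]$ in $\pi_1(T^*M,p)$.

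The identity $[\pi\circ x_0]=[\pi\circ x_1]$ therefore reduces to showing that $L$ is null-homotopic in $T^*M$ (equivalently, $\pi\circ L$ is null-homotopic in $M$ via the homotopy equivalence $\pi$). I expect this to be the principal obstacle: it is a Schwarz-type assertion, modeled on \cite[Proposition 3.1]{Schwarz2000}, that in the exact symplectic manifold $(T^*M,d\lambda)$ the orbit through $q$ of the loop $s\mapsto\psi_s\circ\varphi^{-1}$ in $\mbox{Ham}_{c}(T^*M,\omega)$ based at the identity is null-homotopic. The key input is the exactness $\omega=d\lambda$, which provides a bounding disk for $L$ via the Liouville form and lets the aspherical-case argument of Schwarz extend from contractible loops to every free homotopy class (compare the footnote on Frauenfelder--Schlenk in the introduction). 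Granting this, $[\pi\circ x_1]=[\pi\circ x_0]=\alpha$, completing the argument.
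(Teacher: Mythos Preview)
Your reduction to showing that the loop $L(s)=\psi_s(q)$ is null-homotopic is correct and is close in spirit to the paper's argument. However, the final step contains a genuine gap: the assertion that ``exactness $\omega=d\lambda$ provides a bounding disk for $L$ via the Liouville form'' is not a valid argument. Exactness of the symplectic form places no constraint on the homotopy class of a loop in $T^*M$ (consider $T^*S^1$), and neither \cite[Proposition~3.1]{Schwarz2000} nor the Frauenfelder--Schlenk result you allude to concerns homotopy classes of orbits---both are statements about invariance of the \emph{action spectrum}, which is a different matter entirely.

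The missing ingredient is in fact simpler than what you propose. The family $(H^s)$ has support contained in the compact set $K:=\mbox{supp}(H_0)\cup\mbox{supp}(H_1)$, so for any $q'\in T^*M\setminus K$ the orbit $s\mapsto\psi_s(q')$ is the constant loop at $q'$, hence contractible. Since $T^*M$ is connected, the continuous evaluation map $T^*M\to\Lambda T^*M$, $x\mapsto(s\mapsto\psi_s(x))$, has image contained in a single component of the free loop space; as that component contains a constant loop, every orbit---in particular $L$---is contractible. This is precisely the mechanism the paper uses: it constructs a loop $(\varphi_t)$ in $\mbox{Ham}_c(T^*M,\omega)$ (by running forward along $K_0:=H_0+f(\eta)F^\chi$ and then backward along $K_1$) whose orbit through $p$ is a reparametrization of $x_0*x_1^{-1}$, and then argues via the same connectedness-of-the-evaluation-map trick that all orbits of $(\varphi_t)$ are contractible. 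The paper invokes the Arnold conjecture to exhibit one contractible orbit, though as noted the compact-support argument already suffices. Your convex-interpolation loop $(\psi_s\circ\varphi^{-1})$ plays the same role as the paper's $(\varphi_t)$; only your proposed justification for null-homotopy was incorrect.
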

\begin{proof}
Suppose $p\in\mbox{LW}_{+}(\Sigma,\varphi)$. Thus there exists $\eta\in\mathbb{R}$
such that $\varphi(\phi_{f(\eta)}^{F}(p))=(p)$. Set $K_{i}:=H_{i}+f(\eta)F^{\chi}$
for $i=0,1$. Then $p$ is a fixed point of $\phi_{1}^{K_{0}}$ and
$\phi_{1}^{K_{1}}$, and if $x_{i}(t):=\phi_{t}^{K_{i}}(p)$ then
$(x_{i},\eta)\in\mbox{Crit}(A_{\mathfrak{f}_{i}})$. Note that by
construction $K_{0}(1,\cdot)\equiv0\equiv K_{1}(1,\cdot)$. Thus we
may define a loop $(\varphi_{t})_{t\in S^{1}}\subseteq\mbox{Ham}_{c}(T^{*}M,\omega)$
by \[
\varphi_{t}:=\begin{cases}
\phi_{2t}^{K_{0}}, & 0\leq t\leq1/2,\\
\phi_{1-2t}^{K_{1}}, & 1/2\leq t\leq1.
\end{cases}
\]
The flow $\varphi_{t}$ is the flow associated to the Hamiltonian
$G\in C_{c}^{\infty}(S^{1}\times T^{*}M,\mathbb{R})$ defined by \[
G(t,\cdot):=\begin{cases}
K_{0}(2t,\cdot), & 0\leq t\leq1/2,\\
-K_{1}(1-2t,\cdot), & 1/2\leq t\leq1.
\end{cases}
\]
Now consider the map $e_{\varphi}:T^{*}M\rightarrow\Lambda T^{*}M$
which sends a point in $T^{*}M$ to its orbit under $(\varphi_{t})$.
Then $\mbox{im}(e_{\varphi})$ is contained in a connected component
of $\Lambda T^{*}M$ (as $M$ is connected). But from the proof of
the Arnold conjecture for cotangent bundles we know that for any 1-periodic
compactly supported Hamiltonian function there exists at least one
contractible 1-periodic solution of the associated Hamiltonian system.
Thus $\mbox{im}(e_{\varphi})\cap\Lambda_{0}T^{*}M\ne\emptyset$, and
hence every loop in the image of $e_{\varphi}$ is contractible; in
particular the loop $e_{\varphi}(p)$ is contractible. But $e_{\varphi}(p)$
is a reparametrization of the loop $x_{0}*x_{1}^{-1}$. Thus necessarily
$x_{0}$ and $x_{1}$ belong to the same component $\Lambda_{\alpha}T^{*}M$
of $\Lambda T^{*}M$.
\end{proof}
Next, we quote the following result due to Albers and Frauenfelder.
\begin{prop}
\label{prop:-generically no plwip}\cite[Theorem 3.3]{AlbersFrauenfelder2008}
Suppose $\dim\, M\geq2$. Then if $\Sigma$ is a non-degenerate fibrewise
starshaped hypersurface then there exists a generic set $\mathcal{G}(\Sigma)\subseteq\mbox{\emph{Ham}}_{c}(T^{*}M,\omega)$
such that if $\varphi\in\mathcal{G}(\Sigma)$ then there are no periodic
leaf-wise intersection points: \[
\mbox{\emph{LW}}_{+}(\Sigma,\varphi)\cap\left\{ x(t)\,:\,(x,T)\in\mathcal{P}(\Sigma),\, t\in S^{1}\right\} =\emptyset.
\]

\end{prop}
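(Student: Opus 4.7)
My proof proposal proceeds by a Baire category argument, reducing the existence of a periodic leaf-wise intersection point to an intersection condition for pairs of $1$-dimensional submanifolds in the $2n$-dimensional ambient space $T^{*}M$, and then showing this intersection can be avoided by arbitrarily small Hamiltonian perturbations.

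\textbf{Step 1: Reformulation.} First I would observe that $p \in \mbox{LW}_{+}(\Sigma,\varphi) \cap \{x(t) : (x,T) \in \mathcal{P}(\Sigma)\}$ if and only if $p$ lies on a closed Reeb orbit $\gamma \subseteq \Sigma$ with $\varphi(\gamma)\cap\gamma \neq \emptyset$. Indeed, if $p$ is a periodic leaf-wise intersection point, then $\mathcal{L}_{p}$ is closed, equal to the image of some $(x,T) \in \mathcal{P}(\Sigma)$, and $\varphi(\phi_{\eta}^{\Sigma}(p)) = p$ places both $\phi_{\eta}^{\Sigma}(p)$ and $p$ on the same closed orbit $\gamma$. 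Conversely any point in $\gamma \cap \varphi^{-1}(\gamma)$ yields such a $p$.

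\textbf{Step 2: Countability via non-degeneracy.} Because $\Sigma$ is non-degenerate, for each $T > 0$ the elements of $\mathcal{P}(\Sigma)$ with period $\leq T$ are isolated in $\Lambda T^{*}M$. Since $\Sigma$ is compact and the set of such orbits is therefore discrete in a compact space, it is finite. Taking $T \in \mathbb{N}$, the collection $\{\gamma_{i}\}_{i\in\mathbb{N}}$ of images of closed Reeb orbits in $\Sigma$ is at most countable, and each $\gamma_{i}$ is a compact embedded curve (or a finite union thereof accounting for multiple covers, but the image set is the same as the underlying simple orbit).

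\textbf{Step 3: Openness and density of $U_{\gamma} := \{\varphi : \varphi(\gamma)\cap\gamma = \emptyset\}$.} Openness follows from compactness of $\gamma$ and continuity of $\varphi \mapsto \varphi(\gamma)$ in the $C^{0}$ topology: if $\varphi(\gamma)$ and $\gamma$ are disjoint compact sets, a neighborhood of $\varphi$ preserves this. For density, fix $\varphi_{0} \in \mbox{Ham}_{c}(T^{*}M,\omega)$; I want to produce arbitrarily $C^{\infty}$-close $\varphi \in U_{\gamma}$. Consider the evaluation map
\[
\mbox{ev}: \mbox{Ham}_{c}(T^{*}M,\omega) \times \gamma \to T^{*}M, \qquad (\varphi,p) \mapsto \varphi(p).
\]
For fixed $p$, the partial derivative in $\varphi$ is surjective onto $T_{\varphi(p)}T^{*}M$: one realizes any tangent vector $v$ at $\varphi(p)$ as $X_{G}(\varphi(p))$ for an appropriate compactly supported $G$ localized near $\varphi(p)$. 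Hence $\mbox{ev}$ is a submersion. By the parametric transversality theorem (Sard--Smale, applied in the $C^{k}$ setting with $k$ large and then intersected over $k$), for a residual set of $\varphi$ the map $\gamma \to T^{*}M,\ p \mapsto \varphi(p)$ is transverse to the inclusion $\gamma \hookrightarrow T^{*}M$. Since $\dim \gamma + \dim \gamma = 2 < 2n = \dim T^{*}M$ (here we use $n \geq 2$), transversality forces $\varphi(\gamma)\cap\gamma = \emptyset$, so $\varphi \in U_{\gamma}$. Thus $U_{\gamma}$ is dense.

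\textbf{Step 4: Conclusion.} Set $\mathcal{G}(\Sigma) := \bigcap_{i} U_{\gamma_{i}}$, a countable intersection of open dense subsets, hence residual in $\mbox{Ham}_{c}(T^{*}M,\omega)$. Any $\varphi \in \mathcal{G}(\Sigma)$ satisfies the claim by Step 1.

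The main technical subtlety is the application of the Sard--Smale theorem in $\mbox{Ham}_{c}(T^{*}M,\omega)$, which is a Fréchet rather than Banach manifold. The standard workaround is to work with the Banach manifolds $\mbox{Ham}_{c}^{k}(T^{*}M,\omega)$ of $C^{k}$-regular compactly supported Hamiltonian diffeomorphisms, show each $U_{\gamma} \cap \mbox{Ham}_{c}^{k}$ is open and dense for every $k$, and then pass back to the $C^{\infty}$ setting by a standard approximation argument (cf.\ the analogous step in \cite{AlbersFrauenfelder2008}). Beyond this, every ingredient is elementary: non-degeneracy to get countability, compactness for openness, and the dimension count $2 < 2n$ for density.
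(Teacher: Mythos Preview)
The paper does not actually prove this statement: it is simply quoted from \cite[Theorem 3.3]{AlbersFrauenfelder2008} without argument (see the sentence ``Next, we quote the following result due to Albers and Frauenfelder'' immediately preceding the proposition). There is therefore no proof in the paper to compare against.

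That said, your argument is correct and is essentially the same as the one in \cite{AlbersFrauenfelder2008}: reduce the problem to showing that for each closed Reeb orbit $\gamma$ the condition $\varphi(\gamma)\cap\gamma=\emptyset$ is open and dense, use non-degeneracy to make the collection of $\gamma$'s countable, and invoke the dimension count $1+1<2n$ together with parametric transversality for density. The Banach/Fr\'echet issue you flag is handled in the cited reference exactly as you outline. One minor cosmetic point: in Step~1 you should note that for a closed leaf the time-shift is only defined modulo the period, so positivity of the time-shift is automatic and the restriction to $\mbox{LW}_{+}$ is harmless; you implicitly use this but do not say it.
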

It will be important to be able to control the size of $\left|A_{\mathfrak{f}}(x,\eta)\right|$
in terms of the size of $\left|\eta\right|$ and vice versa for $(x,\eta)\in\mbox{Crit}(A_{\mathfrak{f}})$.
This leads to the following definition.
\begin{defn}
\label{Define-a-semi-norm}Define a semi-norm $\kappa:C_{c}^{\infty}(S^{1}\times T^{*}M,\mathbb{R})\rightarrow[0,\infty)$
by \[
\kappa(H):=\sup_{(t,x)\in S^{1}\times\Lambda T^{*}M}\left|\int_{0}^{1}\lambda(X_{H}(t,x))-H(t,x)dt\right|.
\]
Note that \[
\kappa(H)=\sup\left\{ \left|\eta\right|\,:\,\eta\in\mathcal{A}(A^{H})\right\} ,
\]
where $A^{H}$ is the standard action functional \eqref{eq:standard action functional}.
As remarked in the introduction, since $\mathcal{A}(A^{H})$ depends
only on the element $\phi_{1}^{H}\in\mbox{Ham}_{c}(T^{*}M,\omega)$,
we may regard $\kappa$ as being defined on $\mbox{Ham}_{c}(T^{*}M,\omega)$.
Given $a\geq0$ let $\mathcal{H}(a)\subseteq\mathcal{H}$ denote the
subset of elements $H\in\mathcal{H}$ with $\kappa(H)\leq a$. 
\end{defn}
The following lemma is immediate from \eqref{eq:value of perturbed functional at crit}.
\begin{lem}
\label{lem:action bounds lemma}Suppose $\mathfrak{f}=(F,f,\chi,H)\in\mathfrak{F}_{0}$
with $H\in\mathcal{H}(c)$ for some $c>0$. Then if $(x,\eta)\in\mbox{\emph{Crit}}(A_{\mathfrak{f}})$
and $-\infty<a<b<\infty$, \[
\eta\in(a,b)\ \ \ \Rightarrow\ \ \ f(a)-c<A_{\mathfrak{f}}(x,\eta)<f(b)+c.
\]
Now suppose that $a-c>0$. Then\[
A_{\mathfrak{f}}(x,\eta)\in(a,b)\ \ \ \Rightarrow\ \ \ f^{-1}(a-c)<\eta<f^{-1}(b+c).
\]
\end{lem}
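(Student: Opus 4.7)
The proof is a direct algebraic consequence of the value formula \eqref{eq:value of perturbed functional at crit} combined with the definition of $\kappa$. First I would unify the two subcases that make up $\mathfrak{F}_0=\mathfrak{F}_0'\cup\mathfrak{F}_0''$. For $\mathfrak{f}\in\mathfrak{F}_0''$, Lemma~\ref{lem:key lemma}(2) gives
\[
A_{\mathfrak{f}}(x,\eta)=f(\eta)-\int_{0}^{1}\{\lambda(X_{H}(t,x))-H(t,x)\}\,dt
\]
for every $(x,\eta)\in\mbox{Crit}(A_{\mathfrak{f}})$; for $\mathfrak{f}\in\mathfrak{F}_0'$ we have $H\equiv 0$ and Lemma~\ref{lem:key lemma}(1) simply says $A_{\mathfrak{f}}(x,\eta)=f(\eta)$, which is the same formula with the integral term vanishing identically. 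In either case, Definition~\ref{Define-a-semi-norm} together with the hypothesis $H\in\mathcal{H}(c)$ yields the single estimate
\[
|A_{\mathfrak{f}}(x,\eta)-f(\eta)|\leq\kappa(H)\leq c.
\]

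From this point both implications are pure calculus applied to the strictly increasing function $f\in\mathcal{F}$. For the first implication, if $\eta\in(a,b)$ then monotonicity of $f$ gives $f(a)<f(\eta)<f(b)$, and combining with the two-sided estimate above produces
\[
f(a)-c<A_{\mathfrak{f}}(x,\eta)<f(b)+c.
\]
For the second implication I rearrange the same estimate to $A_{\mathfrak{f}}(x,\eta)-c\leq f(\eta)\leq A_{\mathfrak{f}}(x,\eta)+c$, so the assumption $A_{\mathfrak{f}}(x,\eta)\in(a,b)$ places $f(\eta)$ in $(a-c,b+c)$. The hypothesis $a-c>0$ is precisely what guarantees that $f^{-1}(a-c)$ is defined (recall from the definition of $\mathcal{F}$ that $f$ takes values in $\mathbb{R}^{+}$ with $\lim_{\eta\to-\infty}f(\eta)=0$, so its inverse is defined on a subinterval of $\mathbb{R}^{+}$). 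Applying the strictly increasing function $f^{-1}$ then yields $\eta\in(f^{-1}(a-c),f^{-1}(b+c))$.

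As the author already notes, there is no real obstacle: the only point requiring any care is treating the two pieces of $\mathfrak{F}_{0}$ on an equal footing, which works because the correction integral in \eqref{eq:value of perturbed functional at crit} vanishes trivially when $H\equiv 0$.
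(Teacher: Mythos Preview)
Your proposal is correct and matches the paper's approach exactly: the paper simply declares the lemma ``immediate from \eqref{eq:value of perturbed functional at crit}'', and you have written out precisely that immediate argument, including the observation that the $\mathfrak{F}_0'$ case is the trivial specialization $H\equiv 0$.
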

\begin{cor}
\label{cor:cptness of critical set}Fix $0<c<a<b<\infty$. Suppose
$\mathfrak{f}=(F,f,\chi,H)\in\mathfrak{F}_{0}$ with $H\in\mathcal{H}(c)$.
Then the set $\mbox{\emph{Crit}}^{(a,b)}(A_{\mathfrak{f}})$ is compact.\end{cor}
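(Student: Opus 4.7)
The plan is to carry out the standard two-step compactness argument: first deduce a priori bounds on both the Lagrange multiplier $\eta$ and on the loop $x$, and then bootstrap to $C^\infty$ bounds that allow one to apply Arzel\`a--Ascoli.

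First I would control the Lagrange multiplier. Apply the second implication of Lemma~\ref{lem:action bounds lemma}: since $a-c>0$ and $H\in\mathcal{H}(c)$, any $(x,\eta)\in\mathrm{Crit}^{(a,b)}(A_\mathfrak{f})$ satisfies
\[
f^{-1}(a-c)<\eta<f^{-1}(b+c).
\]
Because $f\in\mathcal{F}$ is strictly increasing with $\lim_{\eta\to-\infty}f(\eta)=0$, both bounds are finite real numbers, so $\eta$ ranges in a bounded interval $I\subset\mathbb{R}$.

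Next I would deduce a $C^0$ bound on $x$. From \eqref{eq:sec eq}, a critical loop satisfies the ODE
\[
\dot x(t)=f(\eta)\,\chi(t)X_F(x(t))+X_H(t,x(t)).
\]
Both $X_F$ and $X_H$ are compactly supported, and $|f(\eta)|$ is bounded on $I$, so $\|\dot x\|_{C^0}$ is bounded by a constant $C_1=C_1(F,H,f,I)$. Hence if $x(t)$ ever enters the compact set $K:=\mathrm{supp}(X_F)\cup\mathrm{supp}(X_H)$, it remains within the $C_1$-neighbourhood of $K$; if it never enters $K$ then the right-hand side vanishes and $x$ is constant, again trapped in a compact set because $x(0)$ must be of action $>a-c>0$, which, via the ODE and the action integral, constrains the constant to lie in a bounded region. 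Either way there is a compact set $K'\subseteq T^*M$, independent of the critical point, such that $x(S^1)\subseteq K'$.

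Given a sequence $(x_n,\eta_n)\in\mathrm{Crit}^{(a,b)}(A_\mathfrak{f})$, I pass to a subsequence with $\eta_n\to\eta_\infty\in\overline{I}$. Since $\|x_n\|_{C^0}$ is bounded and $\|\dot x_n\|_{C^0}$ is bounded, Arzel\`a--Ascoli gives a $C^0$-convergent subsequence. Differentiating the ODE and using boundedness of $(x_n)$ in $C^0$, one obtains bounds on every higher derivative by induction (standard elliptic bootstrapping for a smooth ODE with smooth coefficients on a fixed compact set), so by a diagonal argument we extract a subsequence converging in $C^\infty$ to some $(x_\infty,\eta_\infty)$. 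Passing to the limit in \eqref{eq:sec eq} shows $(x_\infty,\eta_\infty)$ is a critical point, and by continuity of $A_\mathfrak{f}$ we have $A_\mathfrak{f}(x_\infty,\eta_\infty)\in[a,b]$. The main (minor) obstacle is the distinction between the open and closed action window: strictly speaking the argument proves $\mathrm{Crit}^{[a,b]}(A_\mathfrak{f})$ is compact and thus $\mathrm{Crit}^{(a,b)}(A_\mathfrak{f})$ is precompact in $C^\infty$, which is what is actually used in the Floer-theoretic applications below.
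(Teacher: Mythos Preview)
Your proposal is correct and follows the same overall strategy as the paper: bound $\eta$ via Lemma~\ref{lem:action bounds lemma}, obtain a $C^0$ bound on $x$, then apply Arzel\`a--Ascoli and the ODE to conclude.

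One point where the paper's argument is cleaner than yours is the $C^0$ bound on $x$. Your case split is muddled: in your ``second case'' (constant loop avoiding $K$), the integral constraint $\int_0^1\chi(t)F(x)\,dt=0$ forces $F(x(0))=0$, i.e.\ $x(0)\in\Sigma\subseteq K$, so this case is actually vacuous; your action-based justification for trapping the constant is not correct, though fortunately irrelevant. The paper instead notes (via ``$0$ is a regular value of $F$'') that the constraint $\int\chi F(x)=0$ forces $x(\mathrm{supp}(\chi))$ to meet $\Sigma$, and then the uniform bound on $\|\dot x\|_{C^0}$ (from compactly supported $X_F,X_H$ and bounded $f(\eta)$) confines $x(S^1)$ to a fixed compact neighbourhood of $\Sigma$. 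This is the same mechanism you are reaching for, just stated more directly.

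Your final remark about the open versus closed action window is well taken: as written, both your argument and the paper's literally establish that $\mathrm{Crit}^{(a,b)}(A_\mathfrak{f})$ is precompact (the paper's ``precompact, and hence compact'' is a slight overstatement without the hypothesis $a,b\notin\mathcal{A}(A_\mathfrak{f})$). In all subsequent applications this hypothesis is in force, so the distinction is harmless.
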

\begin{proof}
Arguing similarly to Lemma \ref{lem:action bounds lemma}, we see
that if $(x,\eta)\in\mbox{Crit}^{(a,b)}(A_{\mathfrak{f}})$ then $\eta\in(f^{-1}(a-c),f^{-1}(b+c))$.
In particular, $\left|\eta\right|$ is bounded. Since $F$ and $H$
are compactly supported and $0$ is a regular value of $F$, there
exists a compact set $V\subseteq T^{*}M$ such that $x(S^{1})\subseteq V$
for all $(x,\eta)\in\mbox{Crit}(A_{\mathfrak{f}})$. Since $\left|\eta\right|$
is bounded, the Arzela-Ascoli theorem together with the first equation
in \eqref{eq:sec eq} then imply that $\mbox{Crit}^{(a,b)}(A_{\mathfrak{f}})$
is precompact, and hence compact.
\end{proof}
In fact, it will be most convenient to actually require $f(\eta)=\eta$
in the action interval we work with.
\begin{defn}
Given $a>0$ denote by $\mathcal{F}(a)\subseteq\mathcal{F}$ the subset
of functions $f\in\mathcal{F}$ that satisfy $f(\eta)=\eta$ for all
$\eta\in[a,\infty)$.
\end{defn}
We next address the non-degeneracy issue.
\begin{defn}
\label{def:non degen for H}An element $\mathfrak{f}\in\mathfrak{F}_{0}'$
is called \textbf{regular }if $A_{\mathfrak{f}}$ is a \textbf{Morse-Bott
}function, and $\mbox{Crit}(A_{\mathfrak{f}})$ is a discrete union
of circles. If $\mathfrak{f}=(F,f,\chi,0)$ with $F\in\mathcal{D}(\Sigma)$
then $\mathfrak{f}$ is regular if and only if $\Sigma$ is non-degenerate
in the sense of Definition \ref{def:non degen}. In particular, a
generic element of $\mathfrak{F}_{0}'$ is regular (cf. Theorem \ref{thm:nondeg is generic}).
An element $\mathfrak{f}\in\mathfrak{F}_{0}''$ is called \textbf{regular
}if $A_{\mathfrak{f}}$ is a \textbf{Morse }function. Given a fibrewise
starshaped hypersurface $\Sigma$, there is a residual subset $\mathcal{R}(\Sigma)\subseteq\mathcal{H}$
such that if $F\in\mathcal{D}(\Sigma)$ and $H\in\mathcal{R}(\Sigma)$
then for any $f\in\mathcal{F}$ and $\chi\in\mathcal{X}_{0}$ the
quadruple $(F,f,\chi,H)$ is regular. See \cite[Proposition 3.9]{AlbersFrauenfelder2010}.
We denote by \[
\mathfrak{F}_{0,\textrm{reg}}=\mathfrak{F}_{0,\textrm{reg}}'\cup\mathfrak{F}_{0,\textrm{reg}}''
\]
the set of regular elements of $\mathfrak{F}_{0}$.
\end{defn}
Given $J\in\mathcal{J}$ we denote by $\nabla_{J}A_{\mathfrak{f}}$
the gradient of $A_{\mathfrak{f}}$ with respect to the inner product
$\left\langle \left\langle \cdot,\cdot\right\rangle \right\rangle _{J}$
(see \eqref{eq:metric Jg}). A quick computation tells us\[
\nabla_{J}A_{\mathfrak{f}}(x,\eta)=\left(J_{t}(x)(\dot{x}-f(\eta)\chi(t)X_{F}(x)-X_{H}(t,x)),-f'(\eta)\int_{0}^{1}\chi(t)F(x)dt\right).
\]

\begin{defn}
A \textbf{gradient flow line}\emph{ }of $A_{\mathfrak{f}}$ (with
respect to $J\in\mathcal{J}$) is a map $u:\mathbb{R}\rightarrow\Lambda T^{*}M\times\mathbb{R}$
such that \begin{equation}
\partial_{s}u+\nabla_{J}A_{\mathfrak{f}}(u)=0.\label{eq:rf eq}
\end{equation}
In components $u=(x,\eta)$ this reads:\[
\partial_{s}x+J_{t}(x)(\partial_{t}x-f(\eta)\chi(t)X_{F}(x)-X_{H}(t,x))=0;
\]
\[
\partial_{s}\eta-f'(\eta)\int_{0}^{1}\chi(t)F(x)dt=0.
\]
Given $0<a<b<\infty$, denote by $\mathcal{M}^{(a,b)}(\nabla_{J}A_{\mathfrak{f}})$
the set of gradient flow lines $u:\mathbb{R}\rightarrow\Lambda T^{*}M\times\mathbb{R}$
of $A_{\mathfrak{f}}$ that satisfy $a<A_{\mathfrak{f}}(u(s))<b$
for all $s\in\mathbb{R}$. Given $\alpha\in[S^{1},M]$, let $\mathcal{M}^{(a,b)}(\nabla_{J}A_{\mathfrak{f}},\alpha)$
denote the subset of $\mathcal{M}^{(a,b)}(\nabla_{J}A_{\mathfrak{f}})$
consisting of those flow lines $u=(x,\eta)$ that satisfy $[\pi\circ x(s,\cdot)]=\alpha$
for all $s\in\mathbb{R}$. 
\end{defn}
Fix $\mathfrak{f}\in\mathfrak{F}_{0,\textrm{reg}}$. It is well known
that the non-degeneracy assumption that $A_{\mathfrak{f}}$ is Morse(-Bott)
implies that every element $u\in\mathcal{M}^{(a,b)}(\nabla_{J}A_{\mathfrak{f}})$
is asymptotically convergent at each end to elements of $\mbox{Crit}^{(a,b)}(A_{\mathfrak{f}})$.
That is, the limits \[
\lim_{s\rightarrow\pm\infty}u(s,t)=:(x_{\pm}(t),\eta_{\pm}),\ \ \ \lim_{s\rightarrow\infty}\partial_{t}u(s,t)=0,
\]
exist, and the convergence is uniform in $t$, and the limits $(x_{\pm},\eta_{\pm})$
belong to $\mbox{Crit}^{(a,b)}(A_{\mathfrak{f}})$ (see for instance
\cite{Salamon1999}). Moreover, if $E(u)$ denotes the \textbf{energy}\emph{
}of a gradient flow line:\[
E(u):=\int_{-\infty}^{\infty}\left\Vert \partial_{s}u(s)\right\Vert _{J}^{2}ds,
\]
then if $u\in\mathcal{M}^{(a,b)}(\nabla_{J}A_{\mathfrak{f}})$ is
asymptotically convergent to $(x_{\pm},\eta_{\pm})\in\mbox{Crit}^{(a,b)}(A_{\mathfrak{f}})$
it holds that \[
A_{\mathfrak{f}}(x_{-},\eta_{-})-A_{\mathfrak{f}}(x_{+},\eta_{+})=E(u)>0.
\]
Given $u\in\mathcal{M}^{(a,b)}(\nabla_{J}A_{\mathfrak{f}})$, the
linearization of the gradient flow equation gives rise to a Fredholm
operator $D_{u}$. There exists a residual subset $\mathcal{J}_{\textrm{reg}}(\mathfrak{f})$
such that if $J\in\mathcal{J}_{\textrm{reg}}(\mathfrak{f})$ then
for every $0<a<b<\infty$ and every $u\in\mathcal{M}^{(a,b)}(\nabla_{J}A_{\mathfrak{f}})$
the operator $D_{u}$ is surjective.
\begin{defn}
\label{def:acs def}Suppose $S$ is a fibrewise starshaped hypersurface.
An $\omega$-compatible almost complex structure $J$ on $T^{*}M$
is called \textbf{convex }on $T^{*}M\backslash D^{\circ}(S)$ if the
following three conditions hold:

\[
J(\xi_{S})=\xi_{S},\ \ \ \omega(J(p)Y(p),Y(p))=1,\ \ \ d_{p}\phi_{t}^{Y}\circ J(p)=J(p)\circ d_{p}\phi_{t}^{Y}\ \ \ \mbox{for all }p\in S.
\]

Here $\phi_{t}^{Y}$ is the semi-flow of $Y$ on $T^{*}M\backslash D^{\circ}(S)$.
Denote by $\mathcal{J}(S)\subseteq\mathcal{J}$ the set of all time
dependent almost complex structures $J=(J_{t})_{t\in S^{1}}$ such
that each $J_{t}$ is convex and independent of $t$ on $T^{*}M\backslash D^{\circ}(S)$. 
\end{defn}
Our motivation for studying such almost complex structures is the
following lemma, which is based on a well known argument using the
maximum principle.
\begin{lem}
\label{lem:why convex good}Suppose $\Sigma,S$ are fibrewise starshaped
hypersurfaces with $D(\Sigma)\subseteq D^{\circ}(S)$. Suppose $\mathfrak{f}=(F,f,H,\chi)\in\mathfrak{F}_{0,\textrm{\emph{reg}}}$,
where $F\in\mathcal{D}(\Sigma)$ is such that $\mbox{\emph{supp}}(X_{F})\subseteq D^{\circ}(S)$.
Fix $J\in\mathcal{J}(S)$. Then for any $0<a<b<\infty$ and any $u\in\mathcal{M}^{(a,b)}(\nabla_{J}A_{\mathfrak{f}})$
we have $\mbox{\emph{im}}(u)\subseteq D(S)$. 
\end{lem}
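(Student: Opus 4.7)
The plan is the classical maximum principle argument. First I introduce a radial coordinate on the exterior of $S$: since $Y$ is transverse to $S$ and points outward, the Liouville flow yields a diffeomorphism $\Phi: S \times [0,\infty) \to T^*M \setminus D^{\circ}(S)$, $(p,t) \mapsto \phi_t^Y(p)$, and I define $\rho: T^*M \setminus D^{\circ}(S) \to [0,\infty)$ by $\rho \circ \Phi(p,t) := t$. Then $\rho^{-1}(0) = S$, $\rho > 0$ strictly outside $D(S)$, and $Y \cdot \rho \equiv 1$. Writing $u = (x,\eta)$, set
$$\Omega := \{(s,t) \in \mathbb{R} \times S^1 : x(s,t) \notin D(S)\}$$
and $v := \rho \circ x$ on $\Omega$. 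The goal becomes to show $\Omega = \emptyset$.

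The heart of the argument is the subharmonicity $\Delta v \geq 0$ on $\Omega$. On $\Omega$ the vector field $X_F$ vanishes by the hypothesis $\mbox{supp}(X_F) \subseteq D^{\circ}(S)$, and (as is tacit in the setting in which this lemma is applied) $X_H$ likewise vanishes there, so the first component of the Rabinowitz gradient flow equation
$$\partial_s x + J_t(x)\bigl(\partial_t x - f(\eta)\chi(t) X_F(x) - X_H(t,x)\bigr) = 0$$
collapses on $\Omega$ to the honest pseudo-holomorphic equation $\partial_s x + J_t(x)\partial_t x = 0$. The three conditions of Definition \ref{def:acs def} are exactly what is required to make $\rho$ $J$-plurisubharmonic on $T^*M \setminus D^{\circ}(S)$: the invariance of $J$ along the Liouville flow propagates the pointwise normalization $\omega(JY,Y)=1$ outward from $S$, while $J(\xi_S) = \xi_S$ and $\omega$-compatibility together control the behavior on the contact distribution. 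A direct computation along the $J$-holomorphic map $x$ then yields $\Delta v \geq 0$ on $\Omega$; this is the standard calculation underlying Hofer's monotonicity lemma and the SFT-style confinement arguments used in the references cited in the text.

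Finally, the asymptotic convergence $u(s,\cdot) \to (x_\pm, \eta_\pm) \in \mbox{Crit}^{(a,b)}(A_{\mathfrak{f}})$ as $s \to \pm\infty$, combined with the critical point equation \eqref{eq:sec eq} and the support conditions on $X_F$ and $X_H$, forces $x_\pm(S^1) \subseteq D(\Sigma) \subseteq D^{\circ}(S)$. Thus $\Omega$ is relatively compact in $\mathbb{R} \times S^1$, with $v \equiv 0$ on $\partial \Omega$ (since $x \in S$ there) and $v > 0$ on $\Omega$. The strong maximum principle applied to the subharmonic function $v$ on this relatively compact domain then forces $\Omega = \emptyset$, proving $\mbox{im}(u) \subseteq D(S)$. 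The main obstacle is the plurisubharmonicity computation $\Delta v \geq 0$, which is a direct but slightly intricate unpacking of how the three convexity conditions combine with the $J$-holomorphic equation; a secondary subtlety is ensuring the critical-point confinement that makes $\Omega$ bounded, which is trivial when $H \equiv 0$ and otherwise rests on a support condition on $H$ consistent with the paper's running setup.
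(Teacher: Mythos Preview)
Your proposal is correct and follows precisely the approach the paper indicates: the text states the lemma ``is based on a well known argument using the maximum principle'' and gives no further proof, and your outline is exactly that standard argument (Liouville-radial coordinate, reduction to a $J$-holomorphic equation outside $D(S)$, plurisubharmonicity of $\rho$ from the three convexity conditions, and the strong maximum principle on the relatively compact set $\Omega$).

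Two small corrections worth noting. First, your claim $x_\pm(S^1)\subseteq D(\Sigma)$ is too strong when $H\not\equiv 0$: on $[1/2,1]$ the loop flows by $X_H$ and can leave $D(\Sigma)$; what you actually need (and what suffices) is $x_\pm(S^1)\subseteq D^{\circ}(S)$, which follows once $\mathrm{supp}(X_H)\subseteq D^{\circ}(S)$. Second, as you yourself flag, that support condition on $H$ is not literally in the hypotheses of the lemma; it is however implicit in how the paper deploys the result (cf.\ the remark in Section~\ref{sec:Continuation-homomorphisms} that $S$ is a ``fixed large'' hypersurface), so this is a harmless omission in the statement rather than a flaw in your argument.
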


\subsection{\label{sub:Floer-homology-of}Floer homology of the Rabinowitz action
functional}

$\ $\vspace{6 pt}

We now define the Rabinowitz Floer chain complex associated to the
action functional $A_{\mathfrak{f}}$ for $\mathfrak{f}\in\mathfrak{F}_{0,\textrm{reg}}$.
The construction is slightly different depending as to whether $\mathfrak{f}\in\mathfrak{F}_{0,\textrm{reg}}'$
or $\mathfrak{f}\in\mathfrak{F}_{0,\textrm{reg}}''$. We begin with
the latter case, since this is somewhat easier. 

Fix $\mathfrak{f}=(F,f,\chi,H)\in\mathfrak{F}_{0,\textrm{reg}}''$,
$\alpha\in[S^{1},M]$ and $0<a<b<\infty$. Suppose $v:=(x,\eta)$
is a critical point of $A_{\mathfrak{f}}$. Then $x$ is a 1-periodic
orbit of the time-dependent Hamiltonian $G:=f(\eta)F^{\chi}+H$. Since
$A_{\mathfrak{f}}$ is Morse, $x$ is a non-degenerate orbit, and
hence the \textbf{Conley-Zehnder index }$\mu_{\textrm{CZ}}(x;G)$
of $x$ as an orbit of $G$ is a well defined integer. See for instance
\cite{SalamonZehnder1992} or \cite{AbbondandoloSchwarz2006} (the
latter in particular for non-contractible loops) for the definition
of the Conley-Zehnder index, although note that our sign conventions
match \cite{AbbondandoloPortaluriSchwarz2008} not \cite{SalamonZehnder1992}
or \cite{AbbondandoloSchwarz2006}. We define $\mu(v):=\mu_{\textrm{CZ}}(x;G)$.
Let $\mbox{Crit}_{k}(A_{\mathfrak{f}})$ denote those critical points
$v$ with index $\mu(v)=k$. Denote by $CF_{k}^{(a,b)}(A_{\mathfrak{f}},\alpha)$
the $\mathbb{Z}_{2}$-vector space\[
CF_{k}^{(a,b)}(A_{\mathfrak{f}},\alpha):=\mbox{Crit}_{k}^{(a,b)}(A_{\mathfrak{f}},\alpha)\otimes\mathbb{Z}_{2}.
\]
Choose $J\in\mathcal{J}_{\textrm{reg}}(\mathfrak{f})$. Given $v_{\pm}\in\mbox{Crit}^{(a,b)}(A_{\mathfrak{f}},\alpha)$
denote by $\mathcal{M}(v_{-},v_{+})$ the moduli space of maps $u\in\mathcal{M}^{(a,b)}(\nabla_{J}A_{\mathfrak{f}},\alpha)$
that are asymptotically convergent to $v_{\pm},$ divided out by the
translation $\mathbb{R}$-action. Then $\mathcal{M}(v_{-},v_{+})$
carries the structure of a smooth manifold of dimension $\mu(v_{-})-\mu(v_{+})-1$.
Under certain conditions (see Theorem \ref{thm:main theorem of rfh}
below) the manifolds $\mathcal{M}(v_{-},v_{+})$ are compact up to
breaking. Assuming this is the case, the boundary operator $\partial$
on $CF^{(a,b)}(A_{\mathfrak{f}},\alpha)$ is defined via:\[
\partial v:=\sum_{w\in\textrm{Crit}^{(a,b)}(A_{\mathfrak{f}},\alpha)}\#_{2}\mathcal{M}_{0}(v,w)w,
\]
where $\mathcal{M}_{0}(v,w)$ denotes the possibly empty zero-dimensional
component of $\mathcal{M}(v,w)$, and $\#_{2}$ denotes the cardinality
taken modulo $2$. It turns out that $\partial$ has degree $-1$
with respect to the grading $\mu$. We denote by $HF^{(a,b)}(A_{\mathfrak{f}},\alpha)$
the resulting homology, which is independent of the choice of almost
complex structure $J\in\mathcal{J}_{\textrm{reg}}(\mathfrak{f})$
we chose. 

Now let us consider the case $\mathfrak{f}=(F,f,\chi,0)\in\mathfrak{F}_{0,\textrm{reg}}'$.
Suppose $v:=(x,\eta)$ is a critical point of $A_{\mathfrak{f}}$.
Then $x$ is a 1-periodic orbit of the time-dependent Hamiltonian
$G:=f(\eta)F^{\chi}$. Since $A_{\mathfrak{f}}$ is Morse-Bott, $x$
is a transversely non-degenerate orbit, and hence the \textbf{transverse
Conley-Zehnder index }$\mu_{\textrm{CZ}}^{\tau}(x;G)$ of $x$ as
an orbit of $X_{G}$ is a well defined integer (see for instance \cite[Section 3]{AbbondandoloSchwarz2009}
for the definition of the transverse Conley-Zehnder index). 

Pick a Morse function $h:\mbox{Crit}(A_{\mathfrak{f}})\rightarrow\mathbb{R}$,
and denote by $\mbox{Crit}(h)\subseteq\mbox{Crit}(A_{\mathfrak{f}})$
the set of critical points of $h$. Define an augmented grading $\mu:\mbox{Crit}(h)\rightarrow\mathbb{Z}$
by \[
\mu(v):=\mu_{\textrm{CZ}}^{\tau}(x;G)+i_{h}(v),\ \ \ v=(x,\eta),
\]
where $i_{h}(v)\in\{0,1\}$ is the Morse index of $v$. Let $\mbox{Crit}_{k}^{(a,b)}(h,\alpha):=\{v\in\mbox{Crit}(h)\cap\mbox{Crit}^{(a,b)}(A_{\mathfrak{f}},\alpha)\,:\,\mu(v)=k\}$.
Given $k\in\mathbb{Z}$, define \[
CF_{k}^{(a,b)}(A_{\mathfrak{f}},\alpha):=\mbox{Crit}_{k}^{(a,b)}(h,\alpha)\otimes\mathbb{Z}_{2}.
\]
One now defines the boundary operator in much the same way as before,
only this time one must take $\mathcal{M}(v_{-},v_{+})$ to be the
moduli space of \textbf{gradient flow lines with cascades} of $h$.
We refer the reader to \cite[Appendix A]{Frauenfelder2004} for more
information. We emphasize once again that in order to be able to define
the Floer homology we need the manifolds $\mathcal{M}(v_{-},v_{+})$
to be compact up to breaking, which is \textbf{not} always the case.

\subsection{Admissible quadruples}

$\ $\vspace{6 pt}
\begin{defn}
\label{Fix-.-Ad}Fix $\alpha\in[S^{1},M]$. A quadruple $\mathfrak{q}=(\mathfrak{f},a,b,J)$
consisting of $\mathfrak{f}\in\mathfrak{F}_{0,\textrm{reg}}$, $J\in\mathcal{J_{\textrm{reg}}}(\mathfrak{f})$
and $0<a<b<\infty$ is called \textbf{$\alpha$-admissible }if the
following conditions are satisfied:
\begin{enumerate}
\item $\mathcal{A}(A_{\mathfrak{f}},\alpha)\cap\{a,b\}=\emptyset$;
\item The set $\mbox{Crit}^{(a,b)}(A_{\mathfrak{f}},\alpha)$ is compact;
\item There exist constants $C_{\textrm{loop}},C_{\textrm{mult}}>0$ such
that for all $u=(x,\eta)\in\mathcal{M}^{(a,b)}(\nabla_{J}A_{\mathfrak{f}},\alpha)$
it holds that $\left\Vert x\right\Vert _{L^{\infty}}<C_{\textrm{loop}}$
and $\left\Vert \eta\right\Vert _{L^{\infty}}<C_{\textrm{mult}}$.
\end{enumerate}
A quadruple $\mathfrak{q}$ is simply called \textbf{admissible }if
it is $\alpha$-admissible for all $\alpha\in[S^{1},M]$.
\end{defn}
The next result follows by standard arguments in Floer homology, see
for instance \cite{Salamon1990}.
\begin{thm}
\label{thm:main theorem of rfh}Fix $\alpha\in[S^{1},M]$. If $\mathfrak{q}=(\mathfrak{f},a,b,J)$
is an $\alpha$-admissible quadruple, then the Floer homology $HF^{(a,b)}(A_{\mathfrak{f}},\alpha)$
is well defined (that is, the manifolds $\mathcal{M}(v_{-},v_{+})$
are compact up to breaking, see above). 
\end{thm}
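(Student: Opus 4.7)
The strategy is the standard Floer-theoretic compactness argument: show that any sequence $u_{n} \in \mathcal{M}(v_{-},v_{+})$ has a subsequence converging, in the Gromov--Floer sense, either to an element of $\mathcal{M}(v_{-},v_{+})$ or to a broken trajectory whose pieces lie in moduli spaces $\mathcal{M}(v_{-},w_{1}), \mathcal{M}(w_{1},w_{2}),\dots,\mathcal{M}(w_{k},v_{+})$ with each $w_{i}\in\mathrm{Crit}^{(a,b)}(A_{\mathfrak{f}},\alpha)$ (in the $\mathfrak{F}_{0,\mathrm{reg}}''$ case), respectively to a broken cascade in the Morse--Bott case $\mathfrak{f}\in\mathfrak{F}_{0,\mathrm{reg}}'$ (for which we invoke the machinery of \cite[Appendix A]{Frauenfelder2004}).

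First I would establish a uniform energy bound: if $u\in\mathcal{M}^{(a,b)}(\nabla_{J}A_{\mathfrak{f}},\alpha)$ is asymptotic to $v_{\pm}\in\mathrm{Crit}^{(a,b)}(A_{\mathfrak{f}},\alpha)$, then as noted earlier
\[
E(u)=A_{\mathfrak{f}}(v_{-})-A_{\mathfrak{f}}(v_{+})<b-a.
\]
Condition (3) of admissibility provides the uniform $L^{\infty}$ bound $\|x\|_{L^{\infty}}<C_{\mathrm{loop}}$ and $\|\eta\|_{L^{\infty}}<C_{\mathrm{mult}}$, so all flow lines stay inside a fixed compact subset of $\Lambda T^{*}M \times \mathbb{R}$. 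This is precisely what would otherwise have to be proved by hand using a convex almost complex structure and the maximum principle (as in Lemma \ref{lem:why convex good}); by encoding it into admissibility we can proceed abstractly.

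Next I would rule out bubbling. Because $\omega=d\lambda$ is exact on $T^{*}M$, there are no non-constant $J$-holomorphic spheres, and so the usual Hofer energy/bubbling argument (applied to the first component of the gradient flow equation, which is a perturbed Cauchy--Riemann equation in $x$) gives a uniform $C^{0}$ bound on $\partial_{s}x$ and $\partial_{t}x$. For the Lagrange multiplier component, the bound on $\eta$ together with the compact support of $X_{F}$ and $X_{H}$ and the second flow equation
\[
\partial_{s}\eta=f'(\eta)\int_{0}^{1}\chi(t)F(x)\,dt
\]
yields a $C^{0}$ bound on $\partial_{s}\eta$. Elliptic bootstrapping on the first equation, standard for the perturbed Cauchy--Riemann operator, then upgrades these $C^{0}$ bounds to $C^{\infty}_{\mathrm{loc}}$ bounds on $u_{n}$. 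Arzel\`a--Ascoli produces a subsequence converging in $C^{\infty}_{\mathrm{loc}}$ to a gradient flow line $u_{\infty}$ satisfying the same action bounds.

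Finally, the asymptotic convergence of $u_{\infty}$ at $s=\pm\infty$ is governed by standard exponential decay estimates at non-degenerate (resp.\ Morse--Bott non-degenerate) critical points; condition (1) ensures the asymptotic limits lie strictly inside $(a,b)$, while condition (2) says the asymptotic limit set is compact, so after passing to a further subsequence we recover limiting critical points $w_{\pm}\in\mathrm{Crit}^{(a,b)}(A_{\mathfrak{f}},\alpha)$. The usual soft-reparametrization argument (extracting the sequence of translates $u_{n}(\,\cdot+s_{n},\,\cdot)$ for appropriate $s_{n}$) then decomposes the limit as a broken trajectory with all intermediate asymptotes in the same action window. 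I expect the main technical point to be handling the Morse--Bott case $\mathfrak{f}\in\mathfrak{F}_{0,\mathrm{reg}}'$, where one must verify convergence of cascades along the auxiliary Morse function $h$; this requires the additional Morse-theoretic compactness of unparametrized gradient trajectories of $h$ on the discrete circles of critical points, but since these circles are compact and lie in $\mathrm{Crit}^{(a,b)}(A_{\mathfrak{f}},\alpha)$ by condition (2), Frauenfelder's framework applies verbatim. The remaining details are entirely parallel to \cite{Salamon1990} and \cite[Appendix A]{Frauenfelder2004}, so we omit them.
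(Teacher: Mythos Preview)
Your proposal is correct and aligns with the paper's treatment: the paper does not give a proof at all, merely remarking that the result ``follows by standard arguments in Floer homology, see for instance \cite{Salamon1990}'', and what you have written is precisely an outline of those standard arguments (uniform energy bound, $L^{\infty}$ bounds from admissibility, no bubbling by exactness of $\omega$, elliptic bootstrapping, and the breaking/cascade analysis via \cite{Frauenfelder2004} in the Morse--Bott case). There is nothing to add.
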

We will now find conditions under which a quadruple $\mathfrak{q}=(\mathfrak{f},a,b,J)$
is admissible. The first step is the following two preliminary lemmas,
which are minor modifications of the argument of \cite[Lemma 2.11]{AlbersFrauenfelder2010c}.
\begin{lem}
\label{lem:1}Suppose $\mathfrak{f}=(F,f,\chi,H)\in\mathfrak{F}_{0}$
and $J\in\mathcal{J}$. There exist constants $k,T>0$ depending only
on $F$ such that if $x\in\Lambda T^{*}M$ satisfies \[
x(\mbox{\emph{supp}}(\chi))\subseteq U_{k}(F):=F^{-1}(-k,k)
\]
then it holds that \[
\frac{2}{3}\left(A_{\mathfrak{f}}(x,\eta)-T\left\Vert \nabla_{J}A_{\mathfrak{f}}(x,\eta)\right\Vert _{J}-\kappa(H)\right)\leq f(\eta)\leq2\left(A_{\mathfrak{f}}(x,\eta)+T\left\Vert \nabla_{J}A_{\mathfrak{f}}(x,\eta)\right\Vert _{J}+\kappa(H)\right).
\]
\end{lem}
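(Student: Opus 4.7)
The strategy is to substitute the Hamiltonian-equation ``error'' into the action functional and extract $f(\eta)$ as the dominant term. First, define
\[
\zeta(t) := \dot x(t) - f(\eta)\chi(t) X_F(x(t)) - X_H(t,x(t)),
\]
so that the first component of $\nabla_J A_{\mathfrak{f}}(x,\eta)$ is $J_t(x)\zeta(t)$. Since each $J_t$ is $\omega$-compatible, $\|J\zeta\|_{L^2}=\|\zeta\|_{L^2}\le \|\nabla_J A_{\mathfrak{f}}(x,\eta)\|_J$. Starting from the definition of $A_{\mathfrak{f}}$, writing $\int x^*\lambda=\int_0^1 \lambda_{x(t)}(\dot x(t))\,dt$ and substituting $\dot x = f(\eta)\chi X_F + X_H + \zeta$, one rearranges to obtain the key decomposition
\[
A_{\mathfrak{f}}(x,\eta) = f(\eta)\!\int_0^1\!\chi(t)[\lambda(X_F)-F](x(t))\,dt + \int_0^1\![\lambda(X_H)-H](t,x(t))\,dt + \int_0^1\!\lambda_{x(t)}(\zeta(t))\,dt.
\]

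Next I would estimate the three terms. Because $F\in\mathcal{D}(\Sigma)$, along $\Sigma$ we have $X_F=R_\Sigma$ and $F=0$, hence $\lambda(X_F)-F\equiv \lambda_\Sigma(R_\Sigma)=1$ on $\Sigma$. By continuity, there is $k=k(F)>0$ such that $|\lambda(X_F)(p)-F(p)-1|\le 1/2$ for all $p\in U_k(F)$. Combined with $\int_0^1 \chi\,dt = \bar\chi(1)=1$ (which holds for any $\chi\in\mathcal{X}$) and the hypothesis $x(\mathrm{supp}(\chi))\subseteq U_k(F)$, the coefficient $c_1:=\int_0^1\chi[\lambda(X_F)-F]\,dt$ of $f(\eta)$ lies in $[1/2,3/2]$. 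The middle term is bounded in absolute value by $\kappa(H)$ directly from Definition~\ref{Define-a-semi-norm}.

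The main obstacle is the gradient-error term $|\int_0^1\lambda(\zeta)\,dt|$, which I want to control by $T\|\nabla_J A_{\mathfrak{f}}(x,\eta)\|_J$ for some $T$ depending only on $F$. The idea is to show $x$ remains in a compact subset $V\subseteq T^*M$ determined only by $F$ (and the ambient choice of metric/support data that enters $T$): choose $k$ small enough that $U_k(F)$ is precompact, which is possible because $F$ is constant outside a compact set (as $X_F$ is compactly supported); then on $\mathrm{supp}(\chi)$ the hypothesis confines $x$ to this precompact set, while on $[0,1]\setminus\mathrm{supp}(\chi)$ one has $\dot x = X_H+\zeta$ with $X_H$ compactly supported, so a Gr\"onwall-type argument together with $\|\zeta\|_{L^1}\le\|\zeta\|_{L^2}$ keeps $x$ in a slightly enlarged compact set $V$. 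On $V$, the Liouville form $\lambda$ is uniformly bounded by a constant $T=T(F)$, yielding $|\int\lambda(\zeta)\,dt|\le T\|\zeta\|_{L^2}\le T\|\nabla_J A_{\mathfrak{f}}\|_J$. This pointwise confinement of $x$ to a set where $\lambda$ is bounded is the delicate step, since a priori $\lambda$ grows linearly in the fibre direction.

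Putting the three estimates into the key decomposition gives
\[
\bigl|A_{\mathfrak{f}}(x,\eta) - f(\eta)c_1\bigr|\le T\|\nabla_J A_{\mathfrak{f}}(x,\eta)\|_J + \kappa(H),
\]
and dividing through by $c_1\in[1/2,3/2]$ (positivity of $f$ ensures $f(\eta)>0$) produces the claimed two-sided bound with constants $2/3$ and $2$.
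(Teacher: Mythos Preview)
Your decomposition and overall strategy coincide with the paper's proof: substitute $\dot x = f(\eta)\chi X_F + X_H + \zeta$, extract the dominant term $f(\eta)\int_0^1\chi[\lambda(X_F)-F]\,dt$, pin its coefficient to $[\tfrac12,\tfrac32]$ using continuity near $\Sigma$ together with $\int_0^1\chi=1$, absorb the $H$-contribution into $\kappa(H)$, and control $\int_0^1\lambda(\zeta)\,dt$ by $T\|\zeta\|_{L^2}\le T\|\nabla_J A_{\mathfrak f}(x,\eta)\|_J$. The paper does exactly this, with only the cosmetic difference that it bounds $\lambda(X_F)\in[\tfrac12+k,\tfrac32-k]$ and $|F|\le k$ separately on $U_k(F)$ rather than combining them into a single bound on $\lambda(X_F)-F$.

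The one substantive divergence is your handling of the ``delicate step.'' The paper simply \emph{defines} $T:=\|\lambda|_{U_k(F)}\|_\infty$ and applies Cauchy--Schwarz directly; there is no confinement argument for $x$ on $[0,1]\setminus\mathrm{supp}(\chi)$. Your proposed Gr\"onwall-type argument does not give what you claim: the ``slightly enlarged compact set $V$'' would depend on the support of $X_H$ (so $T$ would depend on $H$, contradicting the lemma's assertion that $T=T(F)$), and more seriously on $\|\zeta\|_{L^1}$, which is not bounded a priori since the lemma is stated for \emph{arbitrary} $(x,\eta)\in\Lambda T^*M\times\mathbb R$, not just for pairs with small gradient. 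Thus the bound $|\int\lambda(\zeta)|\le T\|\zeta\|_{L^2}$ you extract would have $T$ depending on the very quantity you are trying to control. You should drop the confinement argument and follow the paper's direct choice $T=\|\lambda|_{U_k(F)}\|_\infty$.
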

\begin{proof}
In this proof and the next we denote by $\left\Vert \cdot\right\Vert _{2}$
the norm\[
\left\Vert \xi\right\Vert _{2}:=\int_{0}^{1}\omega(J\xi,\xi)dt,
\]
so that \[
\left\Vert (\xi,b)\right\Vert _{J}=\sqrt{\left\Vert \xi\right\Vert _{2}^{2}+b^{2}}.
\]
There exists $k>0$ such that\[
\frac{1}{2}+k\leq\lambda(X_{F}(p))\leq\frac{3}{2}-k\ \ \ \mbox{for all }p\in U_{k}(F).
\]
Set \[
T=T(F):=\left\Vert \lambda|_{U_{k}(F)}\right\Vert _{\infty}.
\]
For any $(x,\eta)\in\Lambda T^{*}M\times\mathbb{R}$ with $x(\mbox{supp}(\chi))\subseteq U_{k}(F)$,
we have\begin{align*}
A_{\mathfrak{f}}(x,\eta) & =\int_{0}^{1}\lambda(\dot{x})dt-f(\eta)\int_{0}^{1}F^{\chi}(t,x)dt-\int_{0}^{1}H(t,x)dt\\
 & =f(\eta)\int_{0}^{1}\lambda(\chi(t)X_{F}(x))dt+\int_{0}^{1}\lambda(\dot{x}-f(\eta)\chi(t)X_{F}(x))dt\\
 & -f(\eta)\int_{0}^{1}\chi(t)F(x)dt-\int_{0}^{1}H(t,x)dt\\
 & \geq f(\eta)\int_{0}^{1}\chi(t)\lambda(X_{F}(x))dt-\left|\int_{0}^{1}\lambda(\dot{x}-f(\eta)\chi(t)X_{F}(x)-X_{H}(t,x))dt\right|\\
 & -f(\eta)\int_{0}^{1}\chi(t)F(x)dt-\kappa(H)\\
 & \geq\left(\frac{1}{2}+k\right)f(\eta)-T\left\Vert \dot{x}-f(\eta)\chi(t)X_{F}(x)-X_{H}(t,x)\right\Vert _{2}-f(\eta)k-\kappa(H)\\
 & \geq\frac{1}{2}f(\eta)-T\left\Vert \dot{x}-f(\eta)\chi(t)X_{F}(x)-X_{H}(t,x)\right\Vert _{2}-\kappa(H)\\
 & \geq\frac{1}{2}f(\eta)-T\left\Vert \nabla_{J}A_{\mathfrak{f}}(x,\eta)\right\Vert _{J}-\kappa(H),
\end{align*}
and similarly \begin{align*}
A_{\mathfrak{f}}(x,\eta) & =\int_{0}^{1}\lambda(\dot{x})dt-f(\eta)\int_{0}^{1}F^{\chi}(t,x)dt-\int_{0}^{1}H(t,x)dt\\
 & =f(\eta)\int_{0}^{1}\lambda(\chi(t)X_{F}(x))dt+\int_{0}^{1}\lambda(\dot{x}-f(\eta)\chi(t)X_{F}(t,x))dt\\
 & -f(\eta)\int_{0}^{1}\chi(t)F(x)dt-\int_{0}^{1}H(t,x)dt\\
 & \leq f(\eta)\int_{0}^{1}\chi(t)\lambda(X_{F}(x))dt+\left|\int_{0}^{1}\lambda(\dot{x}-f(\eta)\chi(t)X_{F}(x)-X_{H}(t,x))dt\right|\\
 & -f(\eta)\int_{0}^{1}\chi(t)F(x)dt+\kappa(H)\\
 & \leq\left(\frac{3}{2}-k\right)f(\eta)+T\left\Vert \dot{x}-f(\eta)\chi(t)X_{F}(x)-X_{H}(t,x)\right\Vert _{2}+f(\eta)k+\kappa(H)\\
 & \leq\frac{3}{2}f(\eta)+T\left\Vert \nabla_{J}A_{\mathfrak{f}}(x,\eta)\right\Vert _{J}+\kappa(H).
\end{align*}
\end{proof}
\begin{lem}
\label{lem:lemma 2} Suppose $\mathfrak{f}=(F,f,\chi,H)\in\mathfrak{F}_{0}$
and $J\in\mathcal{J}$. For every $k>0$ there exists $\rho=\rho(k,F)>0$
such that if $(x,\eta)\in\Lambda T^{*}M\times\mathbb{R}$ satisfies:
\[
\left\Vert \nabla_{J}A_{\mathfrak{f}}(x,\eta)\right\Vert _{J}\leq\rho f'(\eta),
\]
then $x(\mbox{\emph{supp}}(\chi))\subseteq U_{k}(F)$.\end{lem}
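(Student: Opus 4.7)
The plan is to combine two consequences of the bound $\|\nabla_J A_{\mathfrak f}(x,\eta)\|_J\leq\rho f'(\eta)$: a weighted-mean bound on $F\circ x$ coming from the $\mathbb{R}$-component of the gradient, and an oscillation bound for $F\circ x$ on $\mathrm{supp}(\chi)$ coming from the loop component. Together these will force $|F\circ x|$ to be uniformly small on $\mathrm{supp}(\chi)$.

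To set things up I would write $\xi(t):=\dot x(t)-f(\eta)\chi(t)X_F(x(t))-X_H(t,x(t))$. The hypothesis, together with $f'>0$ and the fact that $f'\leq 1$ on $\mathcal F$, yields
\[
\|\xi\|_{L^2}\leq\rho f'(\eta)\leq\rho,\qquad \left|\int_0^1\chi(t)F(x(t))\,dt\right|\leq\rho.
\]
The key structural observation is that on $\mathrm{supp}(\chi)=[0,t_0]$ the Hamiltonian vector field $X_H$ vanishes: if $\mathfrak f\in\mathfrak F_0'$ because $H\equiv 0$, and if $\mathfrak f\in\mathfrak F_0''$ because $t_0<1/2$ while $H(t,\cdot)\equiv 0$ on $[0,1/2]$ by definition of $\mathcal H$. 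Since $dF(X_F)=0$, this gives the identity $\frac{d}{dt}F(x(t))=dF(\xi(t))$ for every $t\in[0,t_0]$.

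The oscillation estimate then follows by Cauchy--Schwarz: for any $s,t\in[0,t_0]$,
\[
|F(x(t))-F(x(s))|\leq \|dF\|_\infty\int_0^{t_0}|\xi(\tau)|\,d\tau\leq \|dF\|_\infty\,\|\xi\|_{L^2}\leq \|dF\|_\infty\,\rho,
\]
where $\|dF\|_\infty$ is finite because $F$ is constant outside a compact set (so $X_F$, and hence $dF$, is compactly supported). For the mean-value step, observe that $\int_0^{t_0}\chi(t)\,dt=\bar\chi(t_0)=1$, so $\chi$ is a probability density on $[0,t_0]$ and $\int_0^{t_0}\chi F(x)\,dt$ is a convex combination of the values of $F\circ x$ on $[0,t_0]$. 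By the intermediate value theorem there exists $t^{*}\in[0,t_0]$ with $F(x(t^{*}))=\int_0^{t_0}\chi F(x)\,dt$, hence $|F(x(t^{*}))|\leq\rho$. Combining this with the oscillation bound gives $|F(x(t))|\leq \rho(1+\|dF\|_\infty)$ for every $t\in[0,t_0]$, and choosing
\[
\rho:=\frac{k}{2(1+\|dF\|_\infty)}
\]
yields $x(\mathrm{supp}(\chi))\subseteq U_k(F)$.

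There is no substantial obstacle in this argument; it is essentially a weighted mean-value/oscillation trick, and the work has already been done by choosing the right definition of $\mathfrak F_0$. The only two delicate care-points are: (i) one genuinely needs $X_H\equiv 0$ on $[0,t_0]$ for the identity $\frac{d}{dt}F(x)=dF(\xi)$, which is precisely what $\mathfrak F_0$ enforces; and (ii) the norm $\|dF\|_\infty$ is measured with respect to the $g_J$-metric, so $\rho$ depends implicitly on $J$ as well as on $(k,F)$. This last dependence is suppressed in the displayed notation $\rho(k,F)$ but is harmless since $J$ is fixed throughout.
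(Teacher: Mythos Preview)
Your proof is correct and uses the same two ingredients as the paper: the $\mathbb{R}$-component of the gradient controls the $\chi$-weighted mean of $F\circ x$, and the loop component controls its oscillation on $\mathrm{supp}(\chi)$ via $\tfrac{d}{dt}F(x)=dF(\xi)$ (which, as you correctly emphasise, relies on $X_H\equiv0$ on $\mathrm{supp}(\chi)$, i.e.\ on $\mathfrak f\in\mathfrak F_0$). The only difference is packaging: the paper argues by a dichotomy---either $x(\mathrm{supp}(\chi))$ crosses from $U_{k/2}(F)$ to the complement of $U_k(F)$, forcing the loop component to be large, or it stays entirely outside $U_{k/2}(F)$, forcing the multiplier component to be large---whereas you combine mean and oscillation directly in a forward estimate. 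Your observation (ii) about the implicit $J$-dependence of $\rho$ through $\|dF\|_\infty$ applies equally to the paper's $\|\nabla F\|_\infty$ and is indeed harmless for the applications.
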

\begin{proof}
To begin with, arguing exactly as in \cite[Lemma 2.11, Claim 2]{AlbersFrauenfelder2010c}
(which only uses the loop component of the $\nabla_{J}A_{\mathfrak{f}}(x,\eta)$),
one sees that if $x(\mbox{supp}(\chi))\cap(T^{*}M\backslash U_{k}(F))\ne\emptyset$
and\emph{ }$x(\mbox{supp}(\chi))\cap U_{k/2}(F)\ne\emptyset$ then
\[
\left\Vert \nabla_{J}A_{\mathfrak{f}}(x,\eta)\right\Vert _{J}\geq\frac{k}{2\left\Vert \nabla F\right\Vert _{\infty}}.
\]
Next, if $x(\mbox{supp}(\chi))\subseteq T^{*}M\backslash U_{k/2}(F)$
then looking at the second component of the gradient equation, \[
\left\Vert \nabla_{J}A_{\mathfrak{f}}(x,\eta)\right\Vert _{J}\geq\left|f'(\eta)\int_{0}^{1}\chi(t)F(x)dt\right|\geq f'(\eta)\frac{k}{2}.
\]
 Thus if \[
\rho:=\rho(k,F):=\min\left\{ \frac{k}{2},\frac{k}{2\left\Vert \nabla F\right\Vert _{L^{\infty}}}\right\} ,
\]
then using the fact that $f'(\eta)\leq1$ for all $\eta\in\mathbb{R}$
as $f\in\mathcal{F}$, we see that if $\left\Vert \nabla_{J}A_{\mathfrak{f}}(x,\eta)\right\Vert _{J}\leq\rho f'(\eta)$
then both of the two previous options cannot happen, and hence we
must have $x(\mbox{supp}(\chi))\subseteq U_{k}(F)$.
\end{proof}
Putting these two results together we deduce:
\begin{cor}
\label{cor:the linfty cor}Suppose $\mathfrak{f}=(F,f,\chi,H)\in\mathfrak{F}_{0}$
and $J\in\mathcal{J}$. There exist constants $\rho,T>0$ depending
only on $F$ such that if $(x,\eta)\in\Lambda T^{*}M\times\mathbb{R}$
satisfies\[
\left\Vert \nabla_{J}A_{\mathfrak{f}}(x,\eta)\right\Vert _{J}<\rho f'(\eta)
\]
then\[
\frac{2}{3}\left(A_{\mathfrak{f}}(x,\eta)-T\left\Vert \nabla_{J}A_{\mathfrak{f}}(x,\eta)\right\Vert _{J}-\kappa(H)\right)\leq f(\eta)\leq2\left(A_{\mathfrak{f}}(x,\eta)+T\left\Vert \nabla_{J}A_{\mathfrak{f}}(x,\eta)\right\Vert _{J}+\kappa(H)\right).
\]
\end{cor}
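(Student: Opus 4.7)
The proof of this corollary is essentially just a concatenation of the two preceding lemmas, so the plan is straightforward. First I would invoke Lemma \ref{lem:1} to extract the constants $k = k(F)$ and $T = T(F)$; recall that these are chosen so that on $U_k(F) = F^{-1}(-k,k)$ the inequality $\tfrac{1}{2}+k \leq \lambda(X_F) \leq \tfrac{3}{2}-k$ holds, and $T = \|\lambda|_{U_k(F)}\|_\infty$. Both depend only on $F$.

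Next I would feed this particular $k$ into Lemma \ref{lem:lemma 2} to obtain
\[
\rho := \rho(k,F) = \min\Bigl\{\tfrac{k}{2},\ \tfrac{k}{2\|\nabla F\|_{L^\infty}}\Bigr\},
\]
which also depends only on $F$. Then under the hypothesis
\[
\|\nabla_J A_{\mathfrak{f}}(x,\eta)\|_J < \rho f'(\eta),
\]
Lemma \ref{lem:lemma 2} gives at once that $x(\mathrm{supp}(\chi)) \subseteq U_k(F)$.

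With this containment in hand, the hypothesis of Lemma \ref{lem:1} is satisfied, and its conclusion is exactly the asserted two-sided bound
\[
\tfrac{2}{3}\bigl(A_{\mathfrak{f}}(x,\eta) - T\|\nabla_J A_{\mathfrak{f}}(x,\eta)\|_J - \kappa(H)\bigr) \leq f(\eta) \leq 2\bigl(A_{\mathfrak{f}}(x,\eta) + T\|\nabla_J A_{\mathfrak{f}}(x,\eta)\|_J + \kappa(H)\bigr).
\]
There is no genuine obstacle here; the only mild subtlety is being careful that the choice of $k$ in Lemma \ref{lem:lemma 2} matches the $k$ appearing in Lemma \ref{lem:1}, so that the same $F$-dependent constants appear on both sides and the corollary's claim that $\rho$ and $T$ depend only on $F$ (not on $f$, $\chi$, $H$, or $J$) is honestly justified.
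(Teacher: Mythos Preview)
Your proposal is correct and is exactly the argument the paper has in mind: the paper does not even spell out a proof, merely writing ``Putting these two results together we deduce'' before stating the corollary. Your care in matching the $k$ from Lemma~\ref{lem:1} with the input to Lemma~\ref{lem:lemma 2} is precisely the point, and your observation that $\rho$ and $T$ then depend only on $F$ is the content of Remark~\ref{rem:Observe-the constants}.
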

\begin{rem}
\label{rem:Observe-the constants}The constants $\rho(F)$ and $T(F)$
depend continuously on $F$, and depend only on the behavior of $F$
close to $F^{-1}(0)$.
\end{rem}
We now further refine the class of functions $f$ that we consider.
\begin{defn}
\label{def:strange def}Given $a,r>0$ let $\mathcal{F}(a,r)\subseteq\mathcal{F}(a)$
denote those functions that satisfy the additional condition:
\begin{itemize}
\item There exists $A>0$ such that \begin{equation}
Af'(-A)>r.\label{eq:the constant A}
\end{equation}

\end{itemize}
\end{defn}
\begin{rem}
\label{messy f}Given $a>0$ it is possible to construct a function
$f\in\bigcap_{r>0}\mathcal{F}(a,r)$. To do this one first considers
a function $f_{1}\in\mathcal{F}(a)$ such that $f_{1}(\eta)=e^{\eta}$
for $\eta\leq\log\, a/2$. Then for each $n\in\mathbb{N}$, $n\geq\log\, a/2$
one can choose $\varepsilon_{n}>0$ with $\varepsilon_{n}\rightarrow0$
such that $f_{1}$ can be modified on each interval $(-n-1/2,-n+1/2)$
to a new function $f\in\mathcal{F}(a)$ with the property that $f'(\eta)=1$
for $\eta\in(-n-\varepsilon_{n},-n+\varepsilon_{n})$.

A rough construction of this is as follows: given $n>\log\, a/2$
let \[
\delta_{n}:=\frac{1}{2}\left(e^{-n+1/2}-e^{-n-1/2}\right).
\]
Let $f_{2}$ denote the (non-smooth) function such that $f_{2}=f_{1}$
on $\mathbb{R}\backslash\left(\bigcup_{n\geq\log\, a/2}(-n-1/2,-n+1/2)\right)$
and on each interval $(-n-1/2,-n+1/2)$ is the piecewise linear function
\[
f_{2}(\eta)=\begin{cases}
e^{-n-1/2}, & -n-1/2<\eta\leq-n-\delta_{n},\\
\eta+e^{-n-1/2}+n+\delta_{n}, & -n-\delta_{n}\leq\eta\leq-n+\delta_{n},\\
e^{-n+1/2}, & -n+\delta_{n}\leq\eta<-n+1/2.
\end{cases}
\]
Note that $f_{2}$ is continuous by the choice of $\delta_{n}$. Now
set $\varepsilon_{n}:=\frac{1}{2}\delta_{n}$. Then one can construct
a smooth function $f\in\mathcal{F}(a)$ such that $f=f_{2}$ on $\mathbb{R}\backslash\left(\bigcup_{n\geq\log\, a/2}(-n-1/2,-n-\varepsilon_{n})\cup(-n+\varepsilon_{n},-n+\frac{1}{2})\right)$.
See Figure \ref{fig:The-function} below. By construction $f'(-n)=1$
for each $n\geq\log\, a/2$, and hence $f\in\bigcap_{r>0}\mathcal{F}(a,r)$. 

\begin{figure}
\includegraphics[scale=0.6]{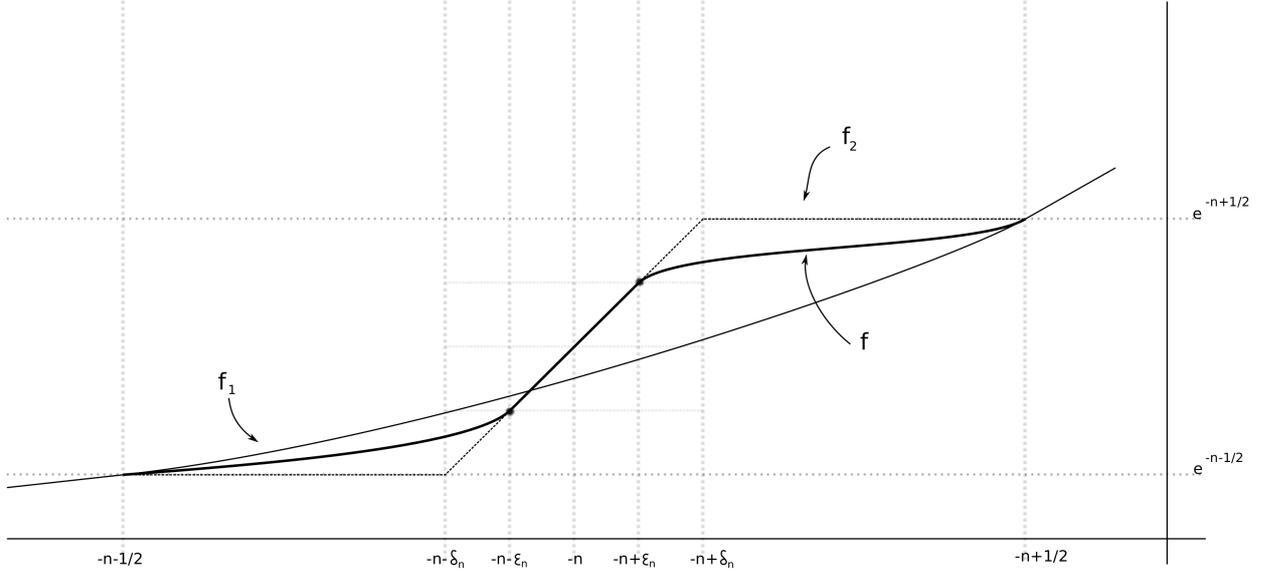}\caption{\label{fig:The-function}The function $f$}

\end{figure}

\end{rem}
The following lemma is elementary, but for the convenience of the
reader we include a proof. 
\begin{lem}
\label{lem:about the set F}For any $a,r>0$ the set $\mathcal{F}(a,r)$
is non-empty and path-connected. If $a'\leq a$ and $r'\geq r$ then
$\mathcal{F}(a',r')\subseteq\mathcal{F}(a,r)$.\end{lem}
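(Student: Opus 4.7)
The non-emptiness and the inclusion $\mathcal{F}(a', r') \subseteq \mathcal{F}(a, r)$ (for $a' \leq a$ and $r' \geq r$) are essentially by inspection. Non-emptiness follows from Remark \ref{messy f}, which furnishes an $f_* \in \bigcap_{r' > 0} \mathcal{F}(a, r')$; in particular $f_* \in \mathcal{F}(a, r)$. For the inclusion, $a' \leq a$ gives $[a, \infty) \subseteq [a', \infty)$, so the condition $f(\eta) = \eta$ on $[a', \infty)$ is stronger than on $[a, \infty)$, whence $\mathcal{F}(a') \subseteq \mathcal{F}(a)$; and $A f'(-A) > r' \geq r$ trivially gives the weaker inequality.

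The substantive content is path-connectedness. The naive straight-line homotopy between arbitrary $f_0, f_1 \in \mathcal{F}(a, r)$ need not preserve the spike condition $\exists\, A > 0 : A f'(-A) > r$, since the witnessing values of $A$ attached to $f_0$ and $f_1$ may differ, and at intermediate parameters the pointwise lower bound on $f_s'$ may be too small to sustain either of them. My plan is to use $f_*$ as a hub: I will show that for every $f \in \mathcal{F}(a, r)$, the affine path $f_s := (1-s) f + s f_*$, $s \in [0, 1]$, lies entirely in $\mathcal{F}(a, r)$; concatenating two such paths $f_0 \rightsquigarrow f_*$ and $f_* \rightsquigarrow f_1$ then connects any pair.

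All the conditions defining $\mathcal{F}(a)$ (smoothness, strict positivity of $f_s$ and $f_s'$, the upper bound $f_s' \leq 1$, the limit $\lim_{\eta \to -\infty} f_s(\eta) = 0$, and the identity $f_s(\eta) = \eta$ on $[a, \infty)$) are convex and therefore automatically inherited by $f_s$. The main obstacle is the spike condition. At $s = 0$ the $A$ witnessing $f \in \mathcal{F}(a, r)$ works. For $s \in (0, 1]$ the key observation is that $f_* \in \mathcal{F}(a, r/s)$, so I may pick $A_s > 0$ with $A_s f_*'(-A_s) > r/s$; since $f_s' = (1-s) f' + s f_*' \geq s f_*'$ pointwise, this yields
\[
A_s f_s'(-A_s) \geq s A_s f_*'(-A_s) > r,
\]
as required. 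Note that the $A_s$ may blow up as $s \to 0^+$, but this is harmless because we only need some $A_s$ for each $s$ separately; it is precisely the simultaneous membership of $f_*$ in every $\mathcal{F}(a, r')$ (with $r'$ arbitrary) that lets us absorb the factor of $s$.
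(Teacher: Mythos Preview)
Your proof is correct, and it takes a genuinely different route from the paper's. The paper argues path-connectedness by first observing that the linear homotopy works when $f_0$ and $f_1$ satisfy the spike condition with the \emph{same} constant $A$, and then shows one can always arrange this: given $f$ with constant $A$ and any $B>A$, it constructs reparametrizations $\lambda_s$ (identity on $[0,\infty)$, shifting by $s(B-A)$ on $(-\infty,-sB]$) and checks that $f_s:=f\circ\lambda_s$ stays in $\mathcal{F}(a,r)$ while moving the witnessing constant from $A$ to $B$. Any two functions can then be joined by first stretching one until the constants match, then interpolating linearly.

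Your hub argument sidesteps the reparametrization entirely: by exploiting that the special function $f_*$ from Remark~\ref{messy f} lies in $\mathcal{F}(a,r')$ for \emph{every} $r'>0$, you can always find a spike of $f_*$ large enough to absorb the factor $s$ in $f_s'\geq s f_*'$, so the straight-line path from any $f$ to $f_*$ never leaves $\mathcal{F}(a,r)$. This is shorter and avoids building and verifying the auxiliary family $(\lambda_s)$; the trade-off is that it leans more heavily on the explicit construction in Remark~\ref{messy f}, whereas the paper's argument is internal to the definition of $\mathcal{F}(a,r)$ and would go through for any single $f_*\in\mathcal{F}(a,r)$.
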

\begin{proof}
We have already proved that $\mathcal{F}(a,r)$ is non-empty (see
Remark \ref{messy f} above). To show that $\mathcal{F}(a,r)$ is
path-connected, first observe that if $f_{0},f_{1}\in\mathcal{F}(a,r)$
both satisfy \eqref{eq:the constant A} with the \textbf{same}\emph{
}constant $A>0$ then the linear homotopy $f_{s}:=sf_{1}+(1-s)f_{0}$
is contained in $\mathcal{F}(a,r)$ for all $s\in[0,1]$. It therefore
suffices to show that if $f\in\mathcal{F}(a,r)$ satisfies \eqref{eq:the constant A}
with respect to some $A>0$, then given any $B>A$ we can find a new
function $f_{1}\in\mathcal{F}(a,r)$ that satisfies \eqref{eq:the constant A}
with respect to $B$, and such that we may find a homotopy $(f_{s})_{s\in[0,1]}\subseteq\mathcal{F}(a,r)$
with $f_{0}=f$. 

In order to do this, let $(\lambda_{s})_{s\in[0,1]}$ denote a family
of smooth functions $\lambda_{s}:\mathbb{R}\rightarrow\mathbb{R}$
such that::\[
\lambda_{s}(\eta)=\begin{cases}
\eta, & 0\leq\eta<\infty,\\
\eta+s(B-A), & -\infty\leq\eta\leq-sB;
\end{cases}\ \ \ 0<\lambda_{s}'\leq1
\]
(such functions $\lambda_{s}$ exist as $A<B$). Set $f_{s}:=f\circ\lambda_{s}$.
We claim that $f_{s}\in\mathcal{F}(a,r)$ for each $s\in[0,1]$. It
is clear that $f_{s}\in\mathcal{F}(a)$ for each $s\in[0,1]$. Moreover,
\[
(A+s(B-A))f_{s}'(-A-s(B-A))=(A+s(B-A))f'(-A)\geq Af'(-A)>r.
\]
Thus $f_{s}$ satisfies \eqref{eq:the constant A} with respect to
$A+s(B-A)$ for each $s\in[0,1]$. The last statement of the lemma
is immediate, and hence this completes the proof.
\end{proof}
The next result uses the same idea as \cite[Proposition 5.5]{CieliebakFrauenfelderOancea2010},
and shows that for a suitable choice of $f\in\mathcal{F}$ one can
bound the $\eta$ component of gradient flow lines with action in
a fixed interval.
\begin{prop}
\label{pro:useful dichotomy}Fix $F\in\mathcal{D}$ and $0<a<b<\infty$.
Let $\rho,T>0$ be the constants associated to $F$ from Corollary
\ref{cor:the linfty cor}. Let $f\in\mathcal{F}\left(\frac{a}{6},\frac{b-a}{\min\{\rho,a/4T\}}\right)$
and $H\in\mathcal{H}(a/2)$. Choose $\chi$ such that $\mathfrak{f}:=(F,f,\chi,H)\in\mathfrak{F}_{0}$
and choose $J\in\mathcal{J}$. There exists a constant $C_{\textrm{\emph{mult}}}>0$
depending only on $a,b,F$ and $f$, such that if $u=(x,\eta)\in\mathcal{M}^{(a,b)}(\nabla_{J}A_{\mathfrak{f}})$,
then $\left\Vert \eta\right\Vert _{L^{\infty}}\leq C_{\textrm{\emph{mult}}}$. \end{prop}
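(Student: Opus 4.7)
The plan is to control $\|\eta\|_{L^\infty}$ by a dichotomy argument modeled on \cite[Lemma 2.11]{AlbersFrauenfelder2010c}, separating the upper and lower bounds on $\eta$. Set $\delta:=\min\{\rho,a/(4T)\}$ and pick $A_0>0$ witnessing the defining condition $A_0f'(-A_0)>(b-a)/\delta$ of $\mathcal{F}(a/6,(b-a)/\delta)$. First, since $A_{\mathfrak{f}}$ is Morse(-Bott) and $u$ has action in $(a,b)$, the flow line is asymptotic at $\pm\infty$ to critical points $v_\pm$; applying Corollary~\ref{cor:the linfty cor} with $\nabla_J A_{\mathfrak{f}}(v_\pm)=0$ and $\kappa(H)\leq a/2$ gives $a/3\leq f(\eta_\pm)\leq 2b+a$, hence $\eta_\pm$ lies in a compact interval. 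Second, the flow line satisfies the global energy bound $\int_{\mathbb R}\|\nabla_J A_{\mathfrak{f}}(u(s))\|_J^2\,ds\leq b-a$, and in particular $\int_{\mathbb R}(\partial_s\eta)^2\,ds\leq b-a$, while pointwise $|\partial_s\eta|=|f'(\eta)\int_0^1\chi F(x)\,dt|\leq\|F\|_\infty$.

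Call a time $s$ \emph{good} if $\|\nabla_J A_{\mathfrak{f}}(u(s))\|_J<\delta f'(\eta(s))$, and \emph{bad} otherwise. At a good time Corollary~\ref{cor:the linfty cor} applies with $\delta$ in place of $\rho$; using $T\|\nabla_J A_{\mathfrak{f}}\|_J\leq T\delta f'(\eta)\leq T\delta\leq a/4$ and $\kappa(H)\leq a/2$, it collapses to
\[
\frac{a}{6}\leq f(\eta(s))\leq 2b+\frac{3a}{2}.
\]
Since $f(\eta)=\eta$ for $\eta\geq a/6$, every good time satisfies $\eta(s)\in[a/6,\,2b+3a/2]$.

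For the upper bound, every component $(s_-,s_+)$ of $\{s:\eta(s)>2b+3a/2\}$ consists entirely of bad times with $\eta\geq a/6$, hence $f'(\eta)=1$ and $\|\nabla_J A_{\mathfrak{f}}\|_J\geq\delta$ there. By the energy bound $s_+-s_-\leq(b-a)/\delta^2$, and then the Lipschitz estimate $|\partial_s\eta|\leq\|F\|_\infty$ yields
\[
\sup_{\mathbb R}\eta\leq 2b+\frac{3a}{2}+\|F\|_\infty\cdot\frac{b-a}{\delta^2}.
\]

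The delicate step is the lower bound, which is exactly what the condition $A_0f'(-A_0)>(b-a)/\delta$ is designed to deliver; this is the main obstacle, since unlike $f$ on $[a/6,\infty)$, where $f'\equiv1$, the derivative $f'$ can be arbitrarily small and need not be monotonic on $(-\infty,a/6)$, so the dichotomy alone does not trap $\eta$ from below. Choosing $A_0$ large enough that $-A_0$ lies below the asymptotic bound from Step~1 (which is permitted by Remark~\ref{messy f}, since the constructed $f$ admits $A_0$ arbitrarily large), any component $[s_1,s_2]$ of $\{\eta\leq-A_0\}$ has $\eta(s_1)=\eta(s_2)=-A_0$ and consists of bad times. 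On the one hand, Cauchy--Schwarz in the $\eta$-component gives $|\eta(s)-\eta(s_1)|^2\leq(s-s_1)(b-a)$. On the other hand, at the entry time $s_1$, $f'(\eta(s_1))=f'(-A_0)>(b-a)/(A_0\delta)$, so the bad-time inequality provides a definite lower bound $\|\nabla_J A_{\mathfrak{f}}\|_J\geq\delta f'(-A_0)>(b-a)/A_0$ in a neighborhood of $s_1$ (controlled via the uniform continuity of $f'$ together with the Lipschitz bound on $\eta$). Feeding this into the energy estimate caps the total length $s_2-s_1$ and hence, via the Cauchy--Schwarz bound, the depth of the excursion, yielding $\inf_{\mathbb R}\eta\geq-A_0-C$ for an explicit $C=C(a,b,F,f)$. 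Combining this with the upper bound and with the asymptotic bounds from Step~1 produces the desired constant $C_{\mathrm{mult}}$.
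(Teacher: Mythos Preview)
Your overall framework (good/bad dichotomy, energy bound, asymptotic control of $\eta_\pm$) and your upper bound are correct and parallel the paper's argument. The gap is in the lower bound.

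You consider a component $[s_1,s_2]$ of $\{\eta\le -A_0\}$ and note that at the endpoint $s_1$ the bad-time inequality gives $\|\nabla_J A_{\mathfrak f}\|_J\ge\delta f'(-A_0)$. You then assert that this ``definite lower bound'' persists on a neighbourhood and, fed into the energy estimate, caps $s_2-s_1$. This does not follow. The neighbourhood on which $f'(\eta(s))$ stays comparable to $f'(-A_0)$ has length controlled only by the modulus of continuity of $f'$ and the Lipschitz bound $|\partial_s\eta|\le\|F\|_\infty$; the energy estimate on that neighbourhood bounds the \emph{neighbourhood's} length, not $s_2-s_1$. Once $\eta$ drifts deeper into the region where $f'$ is tiny (and $f'$ is not assumed monotone, as you yourself emphasise), the bad-time inequality $\|\nabla_J A_{\mathfrak f}\|_J\ge\delta f'(\eta)$ gives almost nothing, and nothing you have written prevents $s_2-s_1$ (and hence the depth of the excursion via Cauchy--Schwarz) from being arbitrarily large. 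So the argument, as it stands, does not close.

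What the paper does instead is the device you are missing: rather than working on components of $\{\eta\le -A_0\}$, for \emph{every} $s$ it introduces the forward waiting time
\[
\sigma_u(s):=\inf\bigl\{\sigma\ge 0:\|\nabla_J A_{\mathfrak f}(u(s+\sigma))\|_J\le\rho_1 f'(\eta(s+\sigma))\bigr\},
\]
and sets $i_u(s):=\inf_{[s,s+\sigma_u(s)]}f'(\eta)$. The energy bound then gives $\sigma_u(s)\le (b-a)/(\rho_1^2 i_u(s)^2)$ and, via Cauchy--Schwarz, $|\eta(s)-\eta(s+\sigma_u(s))|\le (b-a)/(\rho_1 i_u(s))$. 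Since $s+\sigma_u(s)$ is a good time, $\eta(s+\sigma_u(s))\ge a/6$, whence $\eta(s)>-(b-a)/(\rho_1 i_u(s))$ for all $s$. The point is that the factor $i_u(s)$ on the right already encodes the worst behaviour of $f'(\eta)$ along the \emph{entire} waiting interval, which is exactly what your approach lacks. One then obtains the contradiction with $A f'(-A)>(b-a)/\rho_1$ at a time where $\eta=-A$. Your parenthetical appeal to Remark~\ref{messy f} to enlarge $A_0$ is also misplaced: the proposition is stated for a fixed $f\in\mathcal F(a/6,(b-a)/\delta)$, so only the existence of one such $A$ is available, and the argument must use precisely that $A$.
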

\begin{proof}
First note that \begin{equation}
\lim_{s\rightarrow\pm\infty}\eta(s)\geq\frac{a}{2}.\label{eq:limpos}
\end{equation}
Indeed, this follows from the fact that by \eqref{eq:value of perturbed functional at crit},
\[
A_{\mathfrak{f}}(x_{\pm},\eta_{\pm})=f(\eta_{\pm})-\int_{0}^{1}\{\lambda(X_{H}(t,x_{\pm}))-H(t,x_{\pm})\}dt,
\]
and hence \[
f(\eta_{\pm})\geq A_{\mathfrak{f}}(x_{\pm},\eta_{\pm})-\kappa(H)\geq\frac{a}{2}.
\]
Since $f\in\mathcal{F}(a/6)$ one therefore has $\eta_{\pm}\geq a/2$. 

It will be convenient to define \[
\rho_{1}:=\min\left\{ \rho,\frac{a}{4T}\right\} ,
\]
so that $f\in\mathcal{F}\left(\frac{a}{6},\frac{b-a}{\rho_{1}}\right)$.
By definition of the set $\mathcal{F}\left(\frac{a}{6},\frac{b-a}{\rho_{1}}\right)$,
there exists \textbf{$A>0$} such that\begin{equation}
f'(-A)A>\frac{b-a}{\rho_{1}}.\label{eq:B ass}
\end{equation}
Fix $u\in\mathcal{M}^{(a,b)}(\nabla_{J}A_{\mathfrak{f}})$. Define
a function $\sigma_{u}:\mathbb{R}\rightarrow[0,\infty)$ by \begin{equation}
\sigma_{u}(s):=\inf\left\{ \sigma\geq0\,:\,\left\Vert \nabla_{J}A_{\mathfrak{f}}(u(s+\sigma))\right\Vert _{J}\leq\rho_{1}f'(\eta(s+\sigma))\right\} ;\label{eq:sigma u of s}
\end{equation}
$\sigma_{u}$ is well defined as $\lim_{s\rightarrow\infty}f'(\eta(s))=1$
(from \eqref{eq:limpos} and the fact that $f\in\mathcal{F}(a/6)$),
and $\lim_{s\rightarrow\infty}\left\Vert \nabla_{J}A_{\mathfrak{f}}(u(s))\right\Vert _{J}=0$.
Next define

\[
i_{u}(s):=\inf_{s\leq r\leq s+\sigma_{u}(s)}f'(\eta(r)).
\]
Note that \[
E(u)=\int_{-\infty}^{\infty}\left\Vert \nabla_{J}A_{\mathfrak{f}}(u(s))\right\Vert _{J}^{2}ds=\lim_{s\rightarrow-\infty}A_{\mathfrak{f}}(u(s))-\lim_{s\rightarrow\infty}A_{\mathfrak{f}}(u(s))\leq b-a.
\]
Next, \begin{align*}
E(u) & \geq\int_{s}^{s+\sigma_{u}(s)}\left\Vert \nabla_{J}A_{\mathfrak{f}}(u(r))\right\Vert _{J}^{2}dr\\
 & \geq\int_{s}^{s+\sigma_{u}(s)}\rho_{1}^{2}f'(\eta(r))^{2}dr\\
 & \geq\rho_{1}^{2}i_{u}(s)^{2}\sigma_{u}(s),
\end{align*}
and hence \begin{equation}
\sigma_{u}(s)\leq\frac{b-a}{\rho_{1}^{2}i_{u}(s)^{2}}.\label{eq:sigma u}
\end{equation}
Now observe that \begin{align*}
\left|\eta(s)-\eta(s+\sigma_{u}(s))\right| & \leq\int_{s}^{s+\sigma_{u}(s)}\left|\partial_{r}\eta(r)\right|dr\\
 & \leq\left(\sigma_{u}(s)\int_{s}^{s+\sigma_{u}(s)}\left|\partial_{r}\eta(r)\right|^{2}dr\right)^{1/2}\\
 & \leq\left(\sigma_{u}(s)\int_{s}^{s+\sigma_{u}(s)}\left\Vert \nabla_{J}A_{\mathfrak{f}}(u(r))\right\Vert _{J}^{2}dr\right)^{1/2}\\
 & \leq(\sigma_{u}(s)E(u))^{1/2}\\
 & \leq\frac{b-a}{\rho_{1}i_{u}(s)},
\end{align*}
where the last line used \eqref{eq:sigma u}. Next, Corollary \ref{cor:the linfty cor}
implies that for any $s\in\mathbb{R}$, \begin{align*}
f(\eta(s+\sigma_{u}(s)) & \geq\frac{2}{3}\left(A_{\mathfrak{f}}(u(s+\sigma_{u}(s)))-T\left\Vert \nabla_{J}A_{\mathfrak{f}}(u(s+\sigma_{u}(s)))\right\Vert _{J}-\kappa(H)\right)\\
 & \geq\frac{2}{3}\left(a-T\rho_{1}f'(\eta(s+\sigma_{u}(s)))-\frac{a}{2}\right).
\end{align*}
Since $\rho_{1}\leq a/4T$ and $f'\leq1$, we deduce that \[
f(\eta(s+\sigma_{u}(s))\geq\frac{a}{6}.
\]
Since $f\in\mathcal{F}(a/6)$ we see that \[
\eta(s+\sigma_{u}(s))\geq\frac{a}{6}>0,
\]
and thus \[
\eta(s)\geq\frac{a}{6}-\frac{b-a}{\rho_{1}i_{u}(s)}>-\frac{b-a}{\rho_{1}i_{u}(s)}.
\]
In particular, \[
f'(\eta(s))\eta(s)\geq i_{u}(s)\eta(s)>-\frac{(b-a)}{\rho_{1}}.
\]
Using \eqref{eq:limpos}, if there exists some $s_{0}\in\mathbb{R}$
such that $\eta(s_{0})<-A$ then by continuity there exists $s_{1}\in\mathbb{R}$
such that $\eta(s_{1})=-A$. But then we obtain a contradiction via
\eqref{eq:B ass} \[
-\frac{b-a}{\rho_{1}}>-f'(-A)A=f'(\eta(s_{1}))\eta(s_{1})>-\frac{b-a}{\rho_{1}}.
\]
It follows that $\eta(s)>-A$ for all $s\in\mathbb{R}$.

Now we address the upper bound. Define a new function $\widetilde{\sigma}_{u}:\mathbb{R}\rightarrow[0,\infty)$
by \begin{equation}
\widetilde{\sigma}_{u}(s):=\inf\left\{ \sigma\geq0\,:\,\left\Vert \nabla_{J}A_{\mathfrak{f}}(u(s+\sigma))\right\Vert _{J}\leq\rho_{1}f'(-A)\right\} .\label{eq:new sigma}
\end{equation}
Arguing as above we see that for any $s\in\mathbb{R}$, \[
\widetilde{\sigma}_{u}(s)\leq\frac{b-a}{\rho_{1}^{2}f'(-A)^{2}},
\]
and hence\begin{equation}
\left|\eta(s)-\eta(s+\widetilde{\sigma}_{u}(s))\right|\leq\frac{b-a}{\rho_{1}f'(-A)}<A,\label{eq:sigma twiddle}
\end{equation}
where the last inequality used \eqref{eq:B ass} again. Then by Corollary
\ref{cor:the linfty cor} we see that for any $s\in\mathbb{R}$,\begin{align*}
f(\eta(s+\widetilde{\sigma}_{u}(s)) & \leq2\left(A_{\mathfrak{f}}(u(s+\widetilde{\sigma}_{u}(s)))+T\left\Vert \nabla_{J}A_{\mathfrak{f}}(u(s+\widetilde{\sigma}_{u}(s)))\right\Vert _{J}+\kappa(H)\right)\\
 & \leq2(b+T\rho_{1}f'(-A)+a/2)\leq2a+2b,
\end{align*}
and hence $\eta(s+\widetilde{\sigma}_{u}(s))\leq2a+2b$. Thus by \eqref{eq:sigma twiddle},\[
\eta(s)<2a+2b+A.
\]
We conclude that \[
\sup_{s\in\mathbb{R}}\left|\eta(s)\right|<C_{\textrm{mult}}=C_{\textrm{mult}}(a,b,f):=2a+2b+A.
\]

\end{proof}
Proposition \ref{pro:useful dichotomy} prompts the following definition.
\begin{defn}
\label{def:strange def 2}Given $F\in\mathcal{D}$ and $0<a<b<\infty$,
let \[
\mathcal{F}(F,a,b):=\mathcal{F}\left(\frac{a}{6},\frac{b-a}{\rho_{1}}\right),
\]
where $\rho_{1}=\min\{\rho,a/4T\}$ and $\rho=\rho(F)$ and $T=T(F)$
are the constants from Corollary \ref{cor:the linfty cor}. 
\end{defn}
The following result is the main one of this section. 
\begin{thm}
\label{thm:admissable yes}Fix $\alpha\in[S^{1},M]$. Suppose $\mathfrak{f}=(F,f,\chi,H)\in\mathfrak{F}_{0,\textrm{\emph{reg}}}$
and $0<a<b<\infty$ are such that $a,b\notin\mathcal{A}(A_{\mathfrak{f}},\alpha)$.
Suppose also that $f\in\mathcal{F}(F,a,b)$ where $F\in\mathcal{D}(\Sigma)$,
and $H\in\mathcal{H}(a/2)$. Let $S$ denote a fibrewise starshaped
hypersurface such that $D(\Sigma)\subseteq D^{\circ}(S)$ and such
that $\mbox{\emph{supp}}(X_{F})\subseteq D^{\circ}(S)$. Choose $J\in\mathcal{J}_{\textrm{\emph{reg}}}(\mathfrak{f})\cap\mathcal{J}(S)$.
Then the quadruple $\mathfrak{q}:=(\mathfrak{f},a,b,J)$ is $\alpha$-admissible.\end{thm}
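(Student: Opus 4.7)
The plan is to verify the three defining conditions of $\alpha$-admissibility directly, after checking that the numerical hypotheses line up with the corresponding results already proved in this section. Condition (1), $\mathcal{A}(A_\mathfrak{f}, \alpha) \cap \{a, b\} = \emptyset$, is part of the hypothesis, so nothing is required. For condition (2), I would apply Corollary \ref{cor:cptness of critical set}: the assumption $H \in \mathcal{H}(a/2)$ gives $\kappa(H) \leq a/2$, and since $0 < a/2 < a < b < \infty$ the corollary yields compactness of $\mbox{Crit}^{(a,b)}(A_\mathfrak{f})$. Intersecting with the closed subset $\Lambda_{\alpha} T^{*}M \times \mathbb{R}$ preserves compactness, so $\mbox{Crit}^{(a,b)}(A_\mathfrak{f}, \alpha)$ is compact as well.

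The substantive step is condition (3), the uniform $L^{\infty}$-bounds on the loop and multiplier components of gradient flow lines $u = (x, \eta) \in \mathcal{M}^{(a,b)}(\nabla_{J} A_{\mathfrak{f}})$. The bound on $\eta$ is precisely the content of Proposition \ref{pro:useful dichotomy}: the hypotheses $f \in \mathcal{F}(F, a, b) = \mathcal{F}(a/6, (b-a)/\rho_{1})$ and $H \in \mathcal{H}(a/2)$ match exactly what that proposition demands, so it supplies the constant $C_{\mbox{mult}}$. For the loop bound, the assumptions $D(\Sigma) \subseteq D^{\circ}(S)$, $\mbox{supp}(X_{F}) \subseteq D^{\circ}(S)$, and $J \in \mathcal{J}(S)$ are set up precisely so that Lemma \ref{lem:why convex good} applies, forcing $\mbox{im}(u) \subseteq D(S)$. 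Taking $C_{\mbox{loop}}$ to be any constant that bounds the $L^{\infty}$-norm of points of the compact set $D(S)$ (with respect to a fixed background metric on $T^{*}M$) then gives the required bound on $\|x\|_{L^{\infty}}$.

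Since every $u \in \mathcal{M}^{(a,b)}(\nabla_{J} A_{\mathfrak{f}}, \alpha)$ is in particular an element of $\mathcal{M}^{(a,b)}(\nabla_{J} A_{\mathfrak{f}})$, both bounds transfer directly to the $\alpha$-restricted moduli space. I do not anticipate any genuine obstacle: the analytic work, namely the multiplier bound, was carried out in Proposition \ref{pro:useful dichotomy}, and the maximum-principle argument behind the loop bound is packaged into Lemma \ref{lem:why convex good}. The present theorem is the synthesis step that combines these ingredients with Corollary \ref{cor:cptness of critical set} into the precise form needed to define Rabinowitz Floer homology in the next subsection.
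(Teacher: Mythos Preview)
Your proposal is correct and matches the paper's own proof essentially word for word: the paper simply states that the result is immediate from Corollary \ref{cor:cptness of critical set}, Lemma \ref{lem:why convex good}, and Proposition \ref{pro:useful dichotomy}, which is precisely the decomposition you give. Your additional remarks explaining why the numerical hypotheses line up with those of the cited results are accurate and make the argument more transparent than the paper's terse version.
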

\begin{proof}
Immediate from Corollary \ref{cor:cptness of critical set}, Lemma
\ref{lem:why convex good} and Proposition \ref{pro:useful dichotomy}.\end{proof}
\begin{rem}
\label{independent of f}In fact, one can show using an argument based
on Floer's \textbf{bifurcation method }(see \cite[Proposition 4.11]{CieliebakFrauenfelderOancea2010})
that in the situation above, $HF^{(a,b)}(A_{F^{\chi},f}^{H},\alpha)$
is actually \textbf{independent }of the choice of $f\in\mathcal{F}(F,a,b)$.
Nevertheless, for the purposes of the present paper we do not need
this observation, and we will make no use of it. 
\end{rem}

\subsection{\label{sub:Truncating-the-function}Truncating the function $f$}

$\ $\vspace{6 pt}

\textbf{A posteriori}, we discover that one can truncate the function
$f$ at infinity without affecting the Floer homology. Indeed, fix
$\alpha\in[S^{1},M]$ and a non-degenerate fibrewise starshaped hypersurface
$\Sigma$ and $0<a<b<\infty$ such that $a,b\notin\mathcal{A}(\Sigma,\alpha)$.
Choose $F\in\mathcal{D}(\Sigma)$, $f\in\mathcal{F}(F,a,b)$ and $\chi\in\mathcal{X}$.
Set $\mathfrak{f}:=(F,f,\chi,0)\in\mathfrak{F}_{0,\textrm{reg}}'$.
Suppose $R>2a+2b+A+1$, where $A>0$ is the constant associated to
$f$ from \eqref{eq:the constant A}. Let $\bar{f}:\mathbb{R}\rightarrow\mathbb{R}^{+}$
denote a smooth function such that $\bar{f}\equiv f$ on $(-\infty,R-1]$
and such that $\bar{f}(\eta)=R$ for $\eta\in[R+1,\infty)$, with
$0\leq\bar{f}'(\eta)\leq1$ on all of $\mathbb{R}$. We will call
such a function $\bar{f}$ an \textbf{$R$-truncation}\emph{ }of $f$.
Let $\bar{\mathfrak{f}}:=(F,\bar{f},\chi,0)$.

Consider the Rabinowitz action functional $A_{\bar{\mathfrak{f}}}$.
This functional will have many more critical points than $A_{\mathfrak{f}}$,
as $\bar{f}'$ is no longer strictly positive everywhere (i.e. one
can no longer deduce \eqref{eq:sec eq} from \eqref{eq:first eq}).
However if $(x,\eta)$ is a critical point of $A_{\bar{\mathfrak{f}}}$
with $\bar{f}'(\eta)=0$ then we necessarily have $\eta\geq R-1$,
and hence $A_{\bar{\mathfrak{f}}}(x,\eta)=\bar{f}(\eta)\geq R-1$
by Lemma \ref{lem:key lemma}.1. In particular, $(x,\eta)\notin\mbox{Crit}^{(a,b)}(A_{\bar{\mathfrak{f}}})$.
We conclude that \[
\mbox{Crit}^{(a,b)}(A_{\bar{\mathfrak{f}}},\alpha)=\mbox{Crit}^{(a,b)}(A_{\mathfrak{f}},\alpha).
\]
In particular, this implies the Rabinowitz Floer complexes $CF^{(a,b)}(A_{\bar{\mathfrak{f}}},\alpha)$
and $CF^{(a,b)}(A_{\mathfrak{f}},\alpha)$ coincide as \textbf{groups}.\emph{
}Moreover the proof of Proposition \ref{pro:useful dichotomy} shows
that the $\eta$-component of a gradient flow line $u=(x,\eta)\in\mathcal{M}^{(a,b)}(\nabla_{J}A_{\bar{\mathfrak{f}}})$
never escapes the interval $(-A,2a+2b+A]$ (for any $J\in\mathcal{J}$).
In particular, $\eta$ never escapes $(-\infty,R-1)$. Since $f\equiv\bar{f}$
on $(-\infty,R-1]$, it follows that the differential of the two Floer
complexes (with respect to a suitably chosen almost complex structure)
is also the same, whence it follows that \[
HF^{(a,b)}(A_{\bar{\mathfrak{f}}},\alpha)\cong HF^{(a,b)}(A_{\mathfrak{f}},\alpha).
\]
We will use this observation in the proof of Lemma \ref{lem:its an iso}
below.

\subsection{\label{sub:Inclusion/Quotient-maps}Inclusion/Quotient maps}

$\ $\vspace{6 pt}

Let us make the following observation. Suppose we are given $a,b,c,d>0$
such that $a<\min\{b,c\}$ and $d>\max\{b,c\}$. Fix $\alpha\in[S^{1},M]$.
Suppose $\Sigma$ is a non-degenerate fibrewise starshaped hypersurface
and $F\in\mathcal{D}(\Sigma)$, and suppose that $a,b,c,d\notin\mathcal{A}(\Sigma,\alpha)$
and $f\in\mathcal{F}(F,a,d)$. Choose $\chi\in\mathcal{X}$ and $H\in\mathcal{H}(a/2)$
such that $\mathfrak{f}=(F,f,\chi,H)\in\mathfrak{F}_{0,\textrm{reg}}$.
Fix an almost complex structure $J\in\mathcal{J}_{\textrm{ref}}(\mathfrak{f})\cap\mathcal{J}(S)$,
where $S$ is a fibrewise starshaped hypersurface such that $D(\Sigma)\subseteq D^{\circ}(S)$.
Our hypotheses imply that the three Floer homology groups \[
HF^{(a,b)}(A_{\mathfrak{f}},\alpha),\ HF^{(a,d)}(A_{\mathfrak{f}},\alpha)\ \mbox{and }HF^{(c,d)}(A_{\mathfrak{f}},\alpha)
\]
are all well defined. There are natural chain maps between the three
groups given by \[
CF^{(a,b)}(A_{\mathfrak{f}_{i}},\alpha)\overset{\textrm{inclusion}}{\rightarrow}CF^{(a,d)}(A_{\mathfrak{f}_{i}},\alpha)
\]
and \[
CF^{(a,d)}(A_{\mathfrak{f}_{i}},\alpha)\overset{\textrm{quotient}}{\rightarrow}CF^{(a,d)}(A_{\mathfrak{f}_{i}},\alpha)/CF^{(a,c)}(A_{\mathfrak{f}_{i}},\alpha)=CF^{(c,d)}(A_{\mathfrak{f}_{i}},\alpha).
\]
We denote by \begin{equation}
i:HF^{(a,b)}(A_{\mathfrak{f}},\alpha)\rightarrow HF^{(c,d)}(A_{\mathfrak{f}},\alpha)\label{eq:inclusionquotientmap}
\end{equation}
the induced map on homology given by the composition of these two
maps. It is clear that if \[
\mathcal{A}(\Sigma,\alpha)\cap[a,c]=\mathcal{A}(\Sigma,\alpha)\cap[b,d]=\emptyset
\]
then $i$ is an isomorphism.

\subsection{\label{sub:The-Floer-homology}The Floer homology groups $HF^{(a,\infty)}(A_{F,f},\alpha)$}

$\ $\vspace{6 pt}

In this section we extend the definition of $HF^{(a,b)}$ to cover
the case $b=\infty$. Suppose $\Sigma\subseteq T^{*}M$ is a non-degenerate
fibrewise starshaped hypersurface. From this moment on it will be
convenient to work with just \textbf{one} function $f$, instead of
picking a function $f$ for each action interval $(a,b)$. For this
purpose, set $\ell:=\ell(\Sigma)$ and choose \begin{equation}
f\in\bigcap_{r>0}\mathcal{F}(\ell/12,r)\label{eq:one f choice}
\end{equation}
(such functions exist by Remark \ref{messy f}). This function $f$
has the desirable property%
\footnote{As a result, from now on we will abandon the notation $\mathcal{F}(F,a,b)$
and solely work with functions $f$ satisfying \eqref{eq:one f choice}
instead.%
} that given any $F\in\mathcal{D}(\Sigma)$ and any $\ell/2<a<b<\infty$
we have $f\in\mathcal{F}(F,a,b)$. 

Fix $F\in\mathcal{D}(\Sigma)$ and $\alpha\in[S^{1},M]$. Then for
any $\ell/2<a<b<\infty$ such that $a,b\notin\mathcal{A}(\Sigma,\alpha)$,
the Floer homology $HF^{(a,b)}(A_{F,f},\alpha)$ is defined. Moreover
if $c>b$ also satisfies $c\notin\mathcal{A}(\Sigma,\alpha)$, then
from Section \ref{sub:Inclusion/Quotient-maps} there is a natural
map $HF^{(a,b)}(A_{F,f},\alpha)\rightarrow HF^{(a,c)}(A_{F,f},\alpha)$.
These maps form a directed system, and hence we can define\[
HF^{(a,\infty)}(A_{F,f},\alpha):=\underset{b\rightarrow\infty}{\underrightarrow{\lim}}HF^{(a,b)}(A_{F,f},\alpha).
\]
We denote by \begin{equation}
\iota_{a}^{b}:HF^{(a,b)}(A_{F,f},\alpha)\rightarrow HF^{(a,\infty)}(A_{F,f},\alpha)\label{eq:iota}
\end{equation}
the induced map. Since we also have natural maps $HF^{(a,c)}(A_{F,f},\alpha)\rightarrow HF^{(b,c)}(A_{F,f},\alpha)$,
there is an induced map \[
\pi_{a}^{b}:HF^{(a,\infty)}(A_{F,f},\alpha)\rightarrow HF^{(a,b)}(A_{F,f},\alpha).
\]
For future use, given $3\ell/4<a<b<\infty$ with $a,b\notin\mathcal{A}(\Sigma,\alpha)$
let us denote by \[
Z(a,b):=\pi_{3\ell/4}^{b}\circ\iota_{3\ell/4}^{a},
\]
 so that $Z(a,b)$ is a map \begin{equation}
Z(a,b):HF^{(3\ell/4,a)}(A_{F,f},\alpha)\rightarrow HF^{(b,\infty)}(A_{F,f},\alpha).\label{eq:the map Z-1}
\end{equation}

Note that $Z(a,b)=0$ if $a<b$.

\section{\label{sec:Continuation-homomorphisms}Continuation homomorphisms}

In this section we develop the theory of \textbf{continuation homomorphisms
}for the Rabinowitz action functional $A_{\mathfrak{f}}$. Continuation
homomorphisms in Floer theory were introduced originally by Floer
in \cite{Floer1989}, and are a powerful tool for proving invariance
results for Floer homology. The main reason for introducing the function
$f$ is that, as we will see below, these Floer homology groups behave
well with respect to monotone homotopies. This is in contrast to the
usual Rabinowitz Floer homology groups (see for instance \cite{CieliebakFrauenfelder2009}),
for which it is not known whether they behave well with respect to
monotone homotopies, see Remark \ref{The-innocent-looking} below.

\subsection{Continuation maps}

$\ $\vspace{6 pt}

We begin with a discussion of continuation maps in the most general
form that we will need. From now on we will be somewhat sloppy in
our treatment of almost complex structures; wherever possible we will
suppress them from the notation and from our discussion. Sometimes
however we will be forced to include them in our notation (see for
instance \eqref{continuation eq-1} below). In general the reader
should think of $(J_{s})_{s\in[0,1]}$ as a generically chosen family
of almost complex structures that all lie in $\mathcal{J}(S)$ for
some fixed large fibrewise starshaped hypersurface $S$. We will not
specify precisely what conditions $(J_{s})$ must satisfy, and will
content ourselves with merely stating that these conditions are generically
satisfied. In keeping with our new policy of supressing the mention
of $J$, from now on we will refer to a triple $(\mathfrak{f},a,b)$
as being \textbf{admissible} if $(\mathfrak{f},a,b,J)$ is admissible
(in the sense of Definition \ref{Fix-.-Ad}).\newline

Suppose we are given a smooth family $\mathfrak{f}_{s}=(F_{s},f_{s},\chi_{s},H_{s})\subseteq\mathfrak{F}_{0}$
for $s\in[0,1]$. Assume that $\mathfrak{f}_{0}$ and $\mathfrak{f}_{1}$
lie in $\mathfrak{F}_{0,\textrm{reg}}$. Let us fix once and for all
a smooth cut-off function $\beta:\mathbb{R}\rightarrow[0,1]$ such
that $\beta(s)=0$ for $s\leq0$ and $\beta(s)=1$ for $s\geq1$,
and $0\leq\beta'(s)\leq2$ for all $s\in\mathbb{R}$. Let $\mathcal{N}(\nabla A_{\mathfrak{f}_{s}},\alpha)$
denote the set of maps $u=(x,\eta):\mathbb{R}\rightarrow\Lambda_{\alpha}T^{*}M\times\mathbb{R}$
that satisfy\[
\partial_{s}u+\nabla_{J_{\beta(s)}}A_{\mathfrak{f}_{\beta(s)}}(u)=0.
\]
It would be more accurate to write $\mathcal{N}(\nabla_{J_{\beta(s)}}A_{\mathfrak{f}_{\beta(s)}},\alpha)$,
but we omit the {}``$J_{s}$'' and the {}``$\beta$'' in order
to make the notation slightly less cumbersome. Thus $\mathcal{N}(\nabla A_{\mathfrak{f}_{s}},\alpha)$
is the set of maps $u=(x,\eta):\mathbb{R}\rightarrow\Lambda_{\alpha}T^{*}M\times\mathbb{R}$
that satisfy:\begin{equation}
\begin{cases}
\partial_{s}x+J_{\beta(s),t}(x)(\partial_{t}x-f_{\beta(s)}(\eta)\chi_{\beta(s)}(t)X_{F_{\beta(s)}}(x)+X_{H_{\beta(s)}}(t,x))=0\\
\partial_{s}\eta-f_{\beta(s)}'(\eta(s))\int_{0}^{1}F_{\beta(s)}(x)dt=0.
\end{cases}\label{continuation eq-1}
\end{equation}
If $u=(x,\eta)$ satisfies \eqref{continuation eq-1} and has finite
energy $E(u)<\infty$ then as before the limits\begin{equation}
\lim_{s\rightarrow\pm\infty}u(s,t)=:v_{\pm}(t)=(x_{\pm}(t),\eta_{\pm}),\ \ \ \lim_{s\rightarrow\pm\infty}\partial_{s}u(s,t)=0\label{cont 2 eq-1}
\end{equation}
exist and are uniform in the $t$-variable. Moreover $v_{-}\in\mbox{Crit}(A_{\mathfrak{f}_{0}},\alpha)$
and $v_{+}\in\mbox{Crit}(A_{\mathfrak{f}_{1}},\alpha)$. 

Given $u\in\mathcal{N}(\nabla A_{\mathfrak{f}_{s}},\alpha)$ and $-\infty\leq s_{0}\leq s_{1}\leq\infty$,
set \[
\Delta_{s_{0}}^{s_{1}}(u):=\int_{s_{0}}^{s_{1}}\left(\frac{\partial}{\partial s}A_{\mathfrak{f}_{\beta(s)}}\right)(u(s))ds.
\]
Write $\Delta(u):=\Delta_{-\infty}^{\infty}(u)$. Following Ginzburg
\cite{Ginzburg2007}, given $C\geq0$ let us say the family $(\mathfrak{f}_{s})$
is\textbf{ $C$-bounded }if for every $u\in\mathcal{N}(\nabla A_{\mathfrak{f}_{s}},\alpha)$
and every $-\infty\leq s_{0}\leq s_{1}\leq\infty$ it holds that\[
\Delta_{s_{0}}^{s_{1}}(u)\leq C.
\]
In order to explain the relevance of the term $\Delta_{s_{0}}^{s_{1}}(u)$,
given $a,b>0$ denote by $\mathcal{N}_{a}^{b}(\nabla A_{\mathfrak{f}_{s}},\alpha)$
the subset of $\mathcal{N}(\nabla A_{\mathfrak{f}_{s}},\alpha)$ consisting
of those maps $u$ that satisfy\[
\lim_{s\rightarrow-\infty}A_{\mathfrak{f}_{\beta(s)}}(u(s))\leq b,\ \ \ \lim_{s\rightarrow\infty}A_{\mathfrak{f}_{\beta(s)}}(u(s))\geq a.
\]
Then if $u\in\mathcal{N}_{a}^{b}(\nabla A_{\mathfrak{f}_{s}},\alpha)$
one readily checks that\begin{equation}
E(u)\leq b-a+\Delta(u);\label{eq:en eq}
\end{equation}
\begin{equation}
\sup_{s\in\mathbb{R}}A_{\mathfrak{f}_{\beta(s)}}(u(s))\leq b+\sup_{s\in\mathbb{R}}\Delta_{-\infty}^{s}(u);\label{eq:sup eq}
\end{equation}

\begin{equation}
\inf_{s\in\mathbb{R}}A_{\mathfrak{f}_{\beta(s)}}(u(s))\geq a-\sup_{s\in\mathbb{R}}\Delta_{s}^{\infty}(u).\label{eq:inf eq}
\end{equation}
In particular,\[
\lim_{s\rightarrow\infty}A_{\mathfrak{f}_{\beta(s)}}(u(s))\leq b+\Delta(u);
\]
\[
\lim_{s\rightarrow\infty}A_{\mathfrak{f}_{\beta(s))}}(u(s))\geq a-\Delta(u).
\]

\begin{defn}
\label{def:admissible families}Fix a family $(\mathfrak{f}_{s})_{s\in[0,1]}$
as above, and fix $a,b>0$ and $C\geq0$. We say that $\{(\mathfrak{f}_{s}),a,b,C\}$
is an \textbf{$\alpha$-admissible family }if 
\begin{enumerate}
\item The triples $(\mathfrak{f}_{0},a,b)$ and $(\mathfrak{f}_{1},a+C,b+C)$
are $\alpha$-admissible. Thus $HF^{(a,b)}(A_{\mathfrak{f}_{0}},\alpha)$
and $HF^{(a+C,b+C)}(A_{\mathfrak{f}_{1}},\alpha)$ are well defined.
\item The family $(\mathfrak{f}_{s})$ is $C$-bounded.
\item There exist constants $C_{\textrm{loop}},C_{\textrm{mult}}>0$ such
that if $u=(x,\eta)\in\mathcal{N}_{a}^{b}(\nabla A_{\mathfrak{f}_{s}},\alpha)$
then it holds that $\left\Vert x\right\Vert _{L^{\infty}}<C_{\textrm{loop}}$
and $\left\Vert \eta\right\Vert _{L^{\infty}}<C_{\textrm{mult}}$.
\end{enumerate}
\end{defn}
The following basic theorem follows from standard Floer homological
methods; see for instance \cite[Section 4.4]{BiranPolterovichSalamon2003}
or \cite[Section 3.2.3]{Ginzburg2007}.
\begin{thm}
\textbf{\emph{\label{thm:(Continuity-property-of}(Continuity properties
of filtered Floer homology)}}
\begin{enumerate}
\item Suppose $\{(\mathfrak{f}_{s}),a,b,C\}$ is an $\alpha$-admissible
family. Then there exists a chain map \[
\Psi:CF^{(a,b)}(A_{\mathfrak{f}_{0}},\alpha)\rightarrow CF^{(a+C,b+C)}(A_{\mathfrak{f}_{1}},\alpha)
\]
which induces a homomorphism \[
\psi:HF^{(a,b)}(A_{\mathfrak{f}_{0}},\alpha)\rightarrow HF^{(a+C,b+C)}(A_{\mathfrak{f}_{1}},\alpha).
\]

\item Suppose $c,d>0$ are such that $a\leq c$ and $b\leq d$. Suppose
in addition that $\{(\mathfrak{f}_{s}),c,d,C\}$ is $\alpha$-admissible.
Then the following diagram commutes:\[
\xymatrix{HF^{(a,b)}(A_{\mathfrak{f}_{0}},\alpha)\ar[r]^{\psi}\ar[d]_{i} & HF^{(a+C,b+C)}(A_{\mathfrak{f}_{1}},\alpha)\ar[d]^{i}\\
HF^{(c,d)}(A_{\mathfrak{f}_{0}},\alpha)\ar[r]_{\psi} & HF^{(c+C,d+C)}(A_{\mathfrak{f}_{1}},\alpha)
}
\]
Here the vertical maps are the maps from \eqref{eq:inclusionquotientmap}.
\end{enumerate}
\end{thm}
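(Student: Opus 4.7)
The plan is to execute the standard Floer continuation argument, with Definition~\ref{def:admissible families} supplying exactly the compactness input needed and the $C$-bounded hypothesis playing the role of ``monotonicity''.

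\emph{Step 1 (Moduli space of continuation trajectories).} For $v_\pm \in \mathrm{Crit}(A_{\mathfrak{f}_{0,1}}, \alpha)$, I would introduce the moduli space $\mathcal{N}(v_-,v_+)$ of finite-energy solutions $u\in\mathcal{N}(\nabla A_{\mathfrak{f}_s},\alpha)$ satisfying the asymptotic conditions \eqref{cont 2 eq-1} with the given $v_\pm$. Since equation \eqref{continuation eq-1} is no longer $\mathbb{R}$-translation invariant, there is no quotient. Choosing the $s$-family $(J_s)$ generically inside $\mathcal{J}(S)$, the universal moduli space is cut out transversally and $\mathcal{N}(v_-,v_+)$ is a smooth manifold of dimension $\mu(v_-)-\mu(v_+)$; in the Morse–Bott case (when $\mathfrak{f}_0$ or $\mathfrak{f}_1$ is in $\mathfrak{F}_{0,\textrm{reg}}'$) I would replace this by the analogous space of cascades in the sense of \cite[Appendix A]{Frauenfelder2004}.

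\emph{Step 2 (The chain map).} Suppose $v_-\in\mathrm{Crit}^{(a,b)}(A_{\mathfrak{f}_0},\alpha)$. The identity
\[
A_{\mathfrak{f}_1}(v_+)-A_{\mathfrak{f}_0}(v_-)=-E(u)+\Delta(u),
\]
combined with $\Delta(u)\le C$ and $E(u)\ge 0$, forces $A_{\mathfrak{f}_1}(v_+)< b+C$, so $\mathcal{N}(v_-,v_+)$ is empty unless $v_+\in\mathrm{Crit}^{(-\infty,b+C)}(A_{\mathfrak{f}_1},\alpha)$. Any $u$ with $v_+\in\mathrm{Crit}^{(a+C,b+C)}$ automatically lies in $\mathcal{N}_a^b(\nabla A_{\mathfrak{f}_s},\alpha)$ by \eqref{eq:sup eq}--\eqref{eq:inf eq}; admissibility condition (3) then yields uniform $L^\infty$-bounds on both components of $u$, and energy is bounded by $b-a+C$ via \eqref{eq:en eq}. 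These ingredients, together with the absence of $J$-holomorphic bubbling in $T^*M$ and the assumption that the endpoint complexes in (1) of Definition~\ref{def:admissible families} are well defined, are precisely what Floer's compactness theorem needs to conclude that the zero-dimensional stratum $\mathcal{N}_0(v_-,v_+)$ is a finite set. Setting
\[
\Psi(v_-):=\sum_{v_+\in\mathrm{Crit}(A_{\mathfrak{f}_1},\alpha)}\#_2\mathcal{N}_0(v_-,v_+)\,v_+,
\]
the previous action estimate shows $\Psi$ descends to a homomorphism $CF^{(a,b)}(A_{\mathfrak{f}_0},\alpha)\to CF^{(a+C,b+C)}(A_{\mathfrak{f}_1},\alpha)$: any $v_+$ with $A_{\mathfrak{f}_1}(v_+)\le a+C$ represents $0$ in the quotient. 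The chain map identity $\partial\circ\Psi=\Psi\circ\partial$ follows by the standard compactification of the one-dimensional components $\mathcal{N}(v_-,v_+)$, whose ends consist of either a $\nabla A_{\mathfrak{f}_0}$-trajectory glued to a continuation trajectory, or a continuation trajectory glued to a $\nabla A_{\mathfrak{f}_1}$-trajectory, with no other breaking possible thanks again to the uniform estimates of Step 1. This defines $\psi$ on homology.

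\emph{Step 3 (Naturality).} For the commuting square, note that at the chain level both $\Psi$ and the inclusion/quotient map $i$ act tautologically on generators: $\Psi$ counts continuation trajectories and $i$ is the canonical chain map associated with the inclusion of subcomplexes. Since $(\mathfrak{f}_s,c,d,C)$ is also admissible, the same moduli spaces compute $\Psi$ in either row of the diagram, while the action-filtration argument from Step 2 shows that $\Psi$ respects the subcomplexes $CF^{\le a}\subseteq CF^{\le c}$ and $CF^{<b}\subseteq CF^{<d}$. Both compositions are therefore represented by the same underlying chain map, so the square commutes already on chains and a fortiori on homology.

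\emph{Main obstacle.} The only nontrivial point is Step 2: one must leverage the $C$-bounded property to convert the action upper bound at $s=-\infty$ into an action upper bound at $s=+\infty$, and use the uniform bounds on $(x,\eta)$ from admissibility condition (3) to prevent escape to infinity in the multiplier variable. Transversality is routine via generic choice of $(J_s)\subseteq\mathcal{J}(S)$, and bubbling is absent in $T^*M$, so once the moduli spaces are confined to a compact set in $\Lambda T^*M\times\mathbb{R}$ the rest of the argument proceeds verbatim as in \cite[Section 4.4]{BiranPolterovichSalamon2003}.
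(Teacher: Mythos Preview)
Your proposal is correct and is precisely the standard Floer continuation argument that the paper has in mind; in fact the paper gives no proof at all beyond the sentence ``follows from standard Floer homological methods; see for instance \cite[Section 4.4]{BiranPolterovichSalamon2003} or \cite[Section 3.2.3]{Ginzburg2007}'', so your sketch is already more detailed than what appears there. The key points---using the $C$-bound on $\Delta_{s_0}^{s_1}(u)$ to control the action shift, invoking condition~(3) of Definition~\ref{def:admissible families} for the $L^\infty$ bounds on $(x,\eta)$, and exploiting exactness of $T^*M$ to rule out bubbling---are exactly the ones the cited references use, and your treatment of the filtration in Step~3 is the standard one.
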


\subsection{\label{sub:Monotone-homotopies-and}Monotone homotopies}

$\ $\vspace{6 pt}

In this section we suppose we are given two non-degenerate fibrewise
starshaped hypersurfaces $\Sigma$ and $\Sigma'$ with the property
that \[
D(\Sigma')\subseteq D(\Sigma).
\]

Let us first fix a smooth family $(\Sigma_{s})_{s\in[0,1]}$ of fibrewise
starshaped hypersurfaces such that:
\begin{enumerate}
\item $\Sigma_{0}=\Sigma$ and $\Sigma_{1}=\Sigma'$;
\item For generic $s\in[0,1]$, $\Sigma_{s}$ is non-degenerate;
\item for any $0\leq s_{0}\leq s_{1}\leq1$ one has $D(\Sigma_{s_{1}})\subseteq D(\Sigma_{s_{0}})$.
\end{enumerate}
We will call such a family a \textbf{concentric }family of fibrewise
starshaped hypersurfaces. Given such a family $(\Sigma_{s})$ it is
possible%
\footnote{For example, one could first let $\widetilde{F}_{s}$ denote the Hamiltonian
constructed at the start of Section \ref{sub:-estimates-for} (see
\eqref{eq:nice H}) below for $\Sigma=\Sigma_{s}$, and then set $F_{s}:=(\widetilde{F}_{s})_{R}$
as in Section \ref{sub:-estimates-for} for some $R>1$.%
} to choose a smooth family $(F_{s})_{s\in[0,1]}\subseteq\mathcal{D}$
of Hamiltonians such that $F_{s}\in\mathcal{D}(\Sigma_{s})$ and such
that $\partial_{s}F_{s}(q,p)\geq0$ for all $(q,p)\in T^{*}M$. For
the remainder of this section we fix such a family $(F_{s})$.

Set \[
\ell:=\min_{s\in[0,1]}\ell(\Sigma_{s})>0,
\]
and fix once and for all a function $f\in\bigcap_{r>0}\mathcal{F}\left(\ell/12,r\right)$.
By construction, given any $s\in[0,1]$ such that $\Sigma_{s}$ is
non-degenerate, and any $\alpha\in[S^{1},M]$, $\chi\in\mathcal{X}$,
and $\ell/2<a<b<\infty$ such that $\mathcal{A}(\Sigma_{s},\alpha)\cap\{a,b\}=\emptyset$,
the Floer homology $HF^{(a,b)}(A_{F_{s}^{\chi},f},\alpha)$ is well
defined. In this section we will only ever use $\chi\equiv1$, so
let us set \[
\mathfrak{f}_{s}:=(F_{s},f,1,0).
\]
Now let us fix $\alpha\in[S^{1},M]$. Suppose we are given $\ell/2<a<b<\infty$
such that $a,b\notin\mathcal{A}(\Sigma,\alpha)\cup\mathcal{A}(\Sigma',\alpha)$.
Then we claim there exists a chain map \[
\Psi_{0}^{1}:CF^{(a,b)}(A_{\mathfrak{f}_{0}},\alpha)\rightarrow CF^{(a,b)}(A_{\mathfrak{f}_{1}},\alpha)
\]
inducing a homomorphism \[
\psi_{0}^{1}:HF^{(a,b)}(A_{\mathfrak{f}_{0}},\alpha)\rightarrow HF^{(a,b)}(A_{\mathfrak{f}_{1}},\alpha).
\]

This follows readily from our discussion above. Indeed, we claim that
$\{(\mathfrak{f}_{s}),a,b,0\}$ is an $\alpha$-admissible family.
Condition (1) of Definition \ref{def:admissible families} is satisfied
by assumption, and since $\partial_{s}F_{s}\geq0$ we have $\Delta_{s_{0}}^{s_{1}}(u)\leq0$
for all $u\in\mathcal{N}(\nabla A_{\mathfrak{f}_{s}},\alpha)$ and
$-\infty\leq s_{0}\leq s_{1}\leq\infty$, which shows Condition (2)
is satisfied. 
\begin{rem}
\label{The-innocent-looking}The innocent looking fact that $\partial_{s}F_{s}\geq0$
implies $\Delta_{s_{0}}^{s_{1}}(u)\leq0$ is in fact the key point
of the present paper, and the whole point of perturbing the Rabinowitz
action functional with a \textbf{positive }function $f$. In the setting
of 'standard' Rabinowitz Floer homology, the corresponding expression
for $\Delta_{s_{0}}^{s_{1}}(u)$ is given by $-\int_{s_{0}}^{s_{1}}\eta(s)\int_{0}^{1}\partial_{s}F_{\beta(s)}(x(s))ds$
instead of $-\int_{s_{0}}^{s_{1}}f(\eta(s))\int_{0}^{1}\partial_{s}F_{\beta(s)}(x(s))ds$.
Since the Lagrange multiplier $\eta(s)$ could very well become negative,
one cannot conclude from $\partial_{s}F_{s}\geq0$ that $\Delta_{s_{0}}^{s_{1}}(u)\leq0$
in the standard case.
\end{rem}
The existence of a constant $C_{\textrm{loop}}>0$ satisfying the
requirements of Condition (3) follows from the choice of a correct
almost complex structure, and we will say nothing about this (see
the opening paragraph of Section \ref{sec:Continuation-homomorphisms}).
Equations \eqref{eq:en eq}, \eqref{eq:sup eq} and \eqref{eq:inf eq}
show that the proof of Proposition \ref{pro:useful dichotomy} goes
through without change to establish the existence of a constant $C_{\textrm{mult}}>0$
such that Condition (3) of Definition \ref{def:admissible families}
is satisfied. Thus theorem \ref{thm:(Continuity-property-of} proves
the claim. 

Note that there is nothing special about $s=0$ and $s=1$; in general
given any $0\leq s_{0}\leq s_{1}\leq1$ such that $\Sigma_{s_{0}}$
and $\Sigma_{s_{1}}$ are non-degenerate and $a,b\notin\mathcal{A}(\Sigma_{s_{0}},\alpha)\cup\mathcal{A}(\Sigma_{s_{1}},\alpha)$,
this construction gives a map \[
\Psi_{s_{0}}^{s_{1}}:CF^{(a,b)}(A_{\mathfrak{f}_{s_{0}}},\alpha)\rightarrow CF^{(a,b)}(A_{\mathfrak{f}_{s_{1}}},\alpha)
\]
inducing a map\[
\psi_{s_{0}}^{s_{1}}:HF^{(a,b)}(A_{\mathfrak{f}_{s_{0}}},\alpha)\rightarrow HF^{(a,b)}(A_{\mathfrak{f}_{s_{1}}},\alpha)
\]

In fact, we can say rather more about the homomorphisms $(\psi_{s_{0}}^{s_{1}})$.
As with Theorem \ref{thm:(Continuity-property-of} itself, these two
properties follow from standard Floer homological methods. See for
instance \cite[Section 4.4]{BiranPolterovichSalamon2003} or \cite[Section 3.2.3]{Ginzburg2007}. 
\begin{enumerate}
\item Firstly, the maps $(\psi_{s_{0}}^{s_{1}})$ are actually \textbf{independent}
of choice of $(\Sigma_{s})$ in the following sense. Suppose $(\widetilde{\Sigma}_{s})_{s\in[0,1]}$
is another family of fibrewise starshaped hypersurfaces satisfying
the three conditions above, with corresponding definining Hamiltonians
$(\widetilde{F}_{s})$. Let \[
\widetilde{\ell}:=\min_{s\in[0,1]}\ell(\widetilde{\Sigma}_{s}).
\]
Suppose that%
\footnote{This caveat is added solely to ensure that the relevant homology groups
are well defined.%
} $f\in\bigcap_{r>0}\mathcal{F}(\widetilde{\ell}/12,r)$. Set $\widetilde{\mathfrak{f}}_{s}:=(\widetilde{F}_{s},f,1,0)$.
Then $\{(\widetilde{\mathfrak{f}}_{s}),a,b,0\}$ is also an $\alpha$-admissible
family, and hence gives rise to another family of chain maps $(\widetilde{\Psi}_{s_{0}}^{s_{1}})$.
These chain maps are chain homotopic to the original chain maps, and
hence they induce the same map on homology. 
\item The induced maps $(\psi_{s_{0}}^{s_{1}})$ enjoy the following \textbf{functorial
properties} whenever they are defined:\[
\psi_{s_{0}}^{s_{2}}=\psi_{s_{1}}^{s_{2}}\circ\psi_{s_{0}}^{s_{1}}\ \ \ \mbox{whenever}\ \ \ 0\leq s_{0}\leq s_{1}\leq s_{2}\leq1;
\]
\[
\psi_{s_{0}}^{s_{0}}=\mathbb{1}.
\]

\end{enumerate}
The proof of the next lemma requires a little more work, but is by
now standard.
\begin{lem}
\label{lem:its an iso}Suppose in addition that $a,b\notin\mathcal{A}(\Sigma_{s},\alpha)$
for \textbf{\emph{all}} $s\in[0,1]$. Then the homomorphism $\psi_{0}^{1}$
is actually an isomorphism. \end{lem}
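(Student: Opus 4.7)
The plan is the standard continuation-invariance argument: subdivide $[0,1]$ into pieces on which nothing crosses the action window $\{a,b\}$, then show that each small monotone continuation map is inverted, up to a canonical isomorphism, by an anti-monotone reverse continuation map.

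First, since $a,b \notin \mathcal{A}(\Sigma_s,\alpha)$ for all $s \in [0,1]$ and the action spectra vary lower-semicontinuously in $s$ (Remark \ref{rem:The-action-spectru}), compactness of $[0,1]$ yields $\varepsilon > 0$ such that $[a-\varepsilon, a+\varepsilon] \cup [b-\varepsilon, b+\varepsilon]$ is disjoint from $\mathcal{A}(\Sigma_s,\alpha)$ for every $s$. Using that non-degeneracy of $\Sigma_s$ is generic in $s$, and that the uniform $L^\infty$ bounds on $(x,\eta)$ supplied by Proposition \ref{pro:useful dichotomy} depend continuously on the data, choose a partition $0 = \tau_0 < \tau_1 < \cdots < \tau_N = 1$ with each $\Sigma_{\tau_j}$ non-degenerate and fine enough that on each subinterval $[\tau_j, \tau_{j+1}]$ one has $|\Delta(u)| < C_j < \varepsilon/2$ for all $u \in \mathcal{N}_a^b(\nabla A_{\mathfrak{f}_s}, \alpha)$. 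By the functoriality $\psi_0^1 = \psi_{\tau_{N-1}}^{\tau_N} \circ \cdots \circ \psi_{\tau_0}^{\tau_1}$ recorded at the end of Section \ref{sub:Monotone-homotopies-and}, it suffices to show each factor $\psi_{\tau_j}^{\tau_{j+1}}$ is an isomorphism.

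Fix one such subinterval and abbreviate $\mathfrak{f} := \mathfrak{f}_{\tau_j}$, $\mathfrak{g} := \mathfrak{f}_{\tau_{j+1}}$, $C := C_j$. The time-reversed family $\widetilde{\mathfrak{f}}_s := \mathfrak{f}_{\tau_j + \tau_{j+1} - s}$ is \emph{anti}-monotone and satisfies $\Delta \leq C$; equations \eqref{eq:en eq}--\eqref{eq:inf eq} together with Proposition \ref{pro:useful dichotomy} imply that $\{(\widetilde{\mathfrak{f}}_s), a, b, C\}$ is an $\alpha$-admissible family (the proof is unchanged except that the free bound $\Delta \leq 0$ is replaced by the explicit bound $\Delta \leq C$). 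Theorem \ref{thm:(Continuity-property-of} then produces a reverse continuation map
\[
\bar\psi : HF^{(a,b)}(A_{\mathfrak{g}},\alpha) \to HF^{(a+C, b+C)}(A_{\mathfrak{f}},\alpha),
\]
and by the choice of $\varepsilon$ the natural maps $i_{\mathfrak{f}}$ and $i_{\mathfrak{g}}$ from \eqref{eq:inclusionquotientmap} shifting $(a,b)$ to $(a+C, b+C)$ are isomorphisms.

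The key step is now the standard concatenation-of-homotopies argument. The concatenation of the forward homotopy $\mathfrak{f} \leadsto \mathfrak{g}$ with its reverse $\mathfrak{g} \leadsto \mathfrak{f}$ gives a loop of Hamiltonian data based at $\mathfrak{f}$, which is homotopic, through homotopies-of-homotopies with fixed endpoints, to the constant homotopy at $\mathfrak{f}$; the associated parametric moduli spaces yield a chain homotopy between $\bar\Psi \circ \Psi$ and the inclusion chain map $CF^{(a,b)}(A_{\mathfrak{f}},\alpha) \hookrightarrow CF^{(a+C, b+C)}(A_{\mathfrak{f}},\alpha)$. Passing to homology gives $\bar\psi \circ \psi = i_{\mathfrak{f}}$, and the symmetric argument (with the roles of forward and reverse exchanged) gives $\psi \circ \bar\psi = i_{\mathfrak{g}}$. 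Since $i_{\mathfrak{f}}$ and $i_{\mathfrak{g}}$ are isomorphisms, $\psi_{\tau_j}^{\tau_{j+1}}$ is both injective and surjective, and the lemma follows by composition over $j$. The main obstacle is the admissibility of the anti-monotone reversed family: whereas monotonicity provides $\Delta \leq 0$ for free, the reverse only provides $\Delta \leq C$ with $C > 0$, and it is the strengthened hypothesis $a,b \notin \mathcal{A}(\Sigma_s,\alpha)$ for \emph{all} $s$ (rather than only at the endpoints) that lets us subdivide finely enough to ensure $C < \varepsilon$, so that the shifted-window groups $HF^{(a+C,b+C)}$ coincide canonically with $HF^{(a,b)}$.
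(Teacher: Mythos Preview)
Your overall strategy---subdivide, reverse, use the homotopy-of-homotopies argument to identify $\bar\psi\circ\psi$ with the window-shift isomorphism---is exactly the right template, and it is also what the paper does. But there is a genuine gap in your treatment of the reversed (anti-monotone) family, and it is precisely the obstacle that the paper's proof is designed to overcome.

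For the reversed homotopy $\widetilde{\mathfrak{f}}_s$ one has
\[
\Delta_{s_0}^{s_1}(u)\;=\;\int_{s_0}^{s_1}\beta'(s)\,f(\eta(s))\int_0^1\bigl(-\partial_r F_r\bigr)(x)\,dt\,ds\;\ge\;0,
\]
and any upper bound for this quantity must control the factor $f(\eta(s))$. Since $f(\eta)=\eta$ for $\eta$ large, this requires an a~priori bound on $\eta(s)$ along the continuation cylinder. But the only source of such a bound is the argument of Proposition~\ref{pro:useful dichotomy}, which in the $s$-dependent setting relies on the action estimates \eqref{eq:en eq}--\eqref{eq:inf eq}, and those in turn require an a~priori bound on $\Delta$. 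So your claim ``fine enough that on each subinterval one has $|\Delta(u)|<C_j$'' is circular: you are using the $\eta$-bound to get the $\Delta$-bound and the $\Delta$-bound to get the $\eta$-bound. Making the subinterval short shrinks $|\tau_{j+1}-\tau_j|\cdot\sup\|\partial_s F_s\|_\infty$, but it does nothing to control $\sup_s f(\eta(s))$.

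The paper breaks this circle by invoking the truncation device of Section~\ref{sub:Truncating-the-function}: replace $f$ by an $R$-truncation $\bar f$ with $\bar f\le R$, so that unconditionally
\[
\Delta_{s_0}^{s_1}(u)\;\le\;2R\sup_{s}\|\partial_s F_s\|_\infty,
\]
and only \emph{then} subdivides so that this quantity is at most $\varepsilon$. One must of course check that passing to $\bar f$ does not change the Floer groups or the chain complexes in the window $(a,b)$; that is exactly the content of Section~\ref{sub:Truncating-the-function}, and the choice $R>2a+2b+A+1$ guarantees it. Once the $\Delta$-bound is in hand without any appeal to an $\eta$-bound, the rest of your argument (admissibility via Proposition~\ref{pro:useful dichotomy}, Theorem~\ref{thm:(Continuity-property-of}, and the concatenation chain homotopy) goes through verbatim. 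In short: insert the truncation step before running the reverse homotopy, and your proof becomes correct and essentially identical to the paper's.
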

\begin{proof}
Let $\rho_{s},T_{s}>0$ be the constants for $F_{s}$ from Corollary
\ref{cor:the linfty cor} (note $\rho_{s}$ and $T_{s}$ depend continuously
on $s$, cf. Remark \ref{rem:Observe-the constants}). Let\[
\rho_{1}:=\min\left\{ \min_{s}\rho_{s},\frac{a}{4\max_{s}T_{s}}\right\} .
\]
Our choice of $f$ guarantees that there exists $0<\varepsilon<a/4$
and $A>0$ such that \begin{equation}
f(\eta)=\eta\ \ \ \mbox{for all }\eta\geq\frac{a-4\varepsilon}{6};\label{eq:ass 1}
\end{equation}
\begin{equation}
Af'(-A)>\frac{b-a+\varepsilon}{\rho_{1}}.\label{eq:ass 2}
\end{equation}
Shrinking $\varepsilon$ if necessary, we may assume in addition that
\begin{equation}
\mathcal{A}(\Sigma_{s},\alpha)\cap[a,a+\varepsilon]=\mathcal{A}(\Sigma_{s},\alpha)\cap[b,b+\varepsilon]=\emptyset\label{eq:spectrum}
\end{equation}
for all $s\in[0,1]$. Now choose $R>2a+2b+A+1$ and let $\bar{f}$
denote an $R$-truncation of $f$ (see Section \ref{sub:Truncating-the-function}).
Set $\mathfrak{g}_{s}:=(F_{1-s},\bar{f},1,0)$. Our choice of $R$
implies that for every $s\in[0,1]$ such that $\Sigma_{s}$ is non-degenerate,
the Floer homology $HF^{(a,b)}(A_{\mathfrak{g}_{s}},\alpha)$ is well
defined, and moreover \[
HF^{(a,b)}(A_{\mathfrak{f}_{0}},\alpha)\cong HF^{(a,b)}(A_{\mathfrak{g}_{1}},\alpha);
\]
\[
HF^{(a,b)}(A_{\mathfrak{f}_{1}},\alpha)\cong HF^{(a,b)}(A_{\mathfrak{g}_{0}},\alpha).
\]
Now we compute that for $u\in\mathcal{N}(\nabla A_{\mathfrak{g}_{s}},\alpha)$
and $-\infty\leq s_{0}\leq s_{1}\leq\infty$, \begin{align*}
\Delta_{s_{0}}^{s_{1}}(u)= & -\int_{s_{0}}^{s_{1}}\bar{f}(\eta)\int_{0}^{1}\left(\partial_{s}F_{\beta(1-s)}(x)\right)dtds\\
 & \leq2R\sup_{s\in[0,1]}\left\Vert \partial_{s}F_{s}\right\Vert _{\infty}.
\end{align*}
Choose $N\in\mathbb{N}$ and a subdivision $0<i_{0}<i_{1}<\dots<i_{N}=1$
such that \begin{equation}
\max_{0\leq p\leq N-1}\left|i_{p+1}-i_{p}\right|\leq\frac{\varepsilon}{2R\sup_{s\in[0,1]}\left\Vert \partial_{s}F_{s}\right\Vert _{\infty}},\label{eq:choice of subdivision}
\end{equation}
and such that $\Sigma_{i_{p}}$ is non-degenerate for each $p=0,1,\dots,N$.
Now set\[
\mathfrak{g}_{s}^{p}:=\left(F_{(1-s)i_{p+1}-si_{p}},\bar{f},1,0\right).
\]
We claim that $\{(\mathfrak{g}_{s}^{p}),a,b,\varepsilon\}$ is an
$\alpha$-admissible family for each $p=0,1,\dots,N-1$. Indeed, Condition
(1) of Definition \ref{def:admissible families} is obviously satisfied,
and Condition (2) is satisfied by \eqref{eq:choice of subdivision}.
Finally, the reader is invited to check that our two assumptions \eqref{eq:ass 1}
and \eqref{eq:ass 2} together with the equations \eqref{eq:en eq},
\eqref{eq:sup eq} and \eqref{eq:inf eq} mean that the proof of Proposition
\ref{pro:useful dichotomy} goes through to ensure that Condition
(3) is satisfied for each $p=0,1,\dots,N-1$. 

As a result, Theorem \ref{thm:(Continuity-property-of} implies that
for each $p=0,1,\dots,N-1$ there exists a chain map \[
\Phi_{i_{p+1}}^{i_{p}}:CF^{(a,b)}(A_{\mathfrak{f}_{i_{p+1}}},\alpha)\rightarrow CF^{(a+\varepsilon,b+\varepsilon)}(A_{\mathfrak{f}_{i_{p}}},\alpha)
\]
inducing a homomorphism\[
\phi_{i_{p+1}}^{i_{p}}:HF^{(a,b)}(A_{\mathfrak{f}_{i_{p+1}}},\alpha)\rightarrow HF^{(a+\varepsilon,b+\varepsilon)}(A_{\mathfrak{f}_{i_{p}}},\alpha).
\]
Next, note that \eqref{eq:spectrum} and \eqref{eq:inclusionquotientmap}
imply that \[
i:HF^{(a,b)}(A_{\mathfrak{f}_{i_{p}}},\alpha)\cong HF^{(a+\varepsilon,b+\varepsilon)}(A_{\mathfrak{f}_{i_{p}}},\alpha)\ \ \ \mbox{for all }p=0,1,\dots,N,
\]
and consequently we may think of $\phi_{i_{p+1}}^{i_{p}}$ as a map
\[
\phi_{i_{p+1}}^{i_{p}}:HF^{(a,b)}(A_{\mathfrak{f}_{i_{p+1}}},\alpha)\rightarrow HF^{(a,b)}(A_{\mathfrak{f}_{i_{p}}},\alpha).
\]
It is now easy to see from the two properties about the continuation
maps given just before the statement of the lemma that $\phi_{i_{p+1}}^{i_{p}}$
is an isomorphism with inverse given by $\psi_{i_{p}}^{i_{p+1}}$.
It thus follows that if\[
\phi_{1}^{0}:=\phi_{i_{N}}^{i_{N-1}}\circ\dots\circ\phi_{i_{2}}^{i_{1}}\circ\phi_{i_{1}}^{i_{0}},
\]
then $\phi_{1}^{0}$ is the desired inverse to $\psi_{0}^{1}$. 
\end{proof}
We will be interested in a slight generalization of this. 
\begin{prop}
\label{prop:staircase}Suppose we are given two smooth strictly decreasing
families $(a_{s})_{s\in[0,1]}$ and $(b_{s})_{s\in[0,1]}$ such that
$\ell/2<a_{s}<b_{s}<\infty$ for all $s\in[0,1]$ and such that $a_{s},b_{s}\notin\mathcal{A}(\Sigma_{s},\alpha)$
for all $s\in[0,1]$. Then there exists a chain map \[
\Theta_{0}^{1}:CF^{(a_{0},b_{0})}(A_{\mathfrak{f}_{0}},\alpha)\rightarrow CF^{(a_{1},b_{1})}(A_{\mathfrak{f}_{1}},\alpha)
\]
 inducing an isomorphism \[
\theta_{0}^{1}:HF^{(a_{0},b_{0})}(A_{\mathfrak{f}_{0}},\alpha)\rightarrow HF^{(a_{1},b_{1})}(A_{\mathfrak{f}_{1}},\alpha).
\]
Moreover the following diagram commutes:\[
\xymatrix{HF^{(a_{0},b_{0})}(A_{\mathfrak{f}_{0}},\alpha)\ar[rr]^{\psi_{0}^{1}}\ar[dr]_{\theta_{0}^{1}} &  & HF^{(a_{0},b_{0})}(A_{\mathfrak{f}_{1}},\alpha)\\
 & HF^{(a_{1},b_{1})}(A_{\mathfrak{f}_{1}},\alpha)\ar[ur]_{i}
}
\]
where \[
i:HF^{(a_{1},b_{1})}(A_{\mathfrak{f}_{1}},\alpha)\rightarrow HF^{(a_{0},b_{0})}(A_{\mathfrak{f}_{`}},\alpha)
\]
is the map from \eqref{eq:inclusionquotientmap}.\end{prop}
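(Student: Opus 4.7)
The plan is to reduce to the situation handled by Lemma \ref{lem:its an iso} by subdividing $[0,1]$ into finitely many pieces on which the varying action windows $(a_s, b_s)$ can be replaced by constant ones. At each $s_0 \in [0,1]$, since $a_{s_0} \notin \mathcal{A}(\Sigma_{s_0}, \alpha)$ and the latter is closed, the upper-semicontinuity of the action spectrum recorded in Remark \ref{rem:The-action-spectru} together with the continuity of $s \mapsto a_s$ yields a neighborhood $U$ of $s_0$ and some $\varepsilon > 0$ such that $[a_{s_0} - \varepsilon, a_{s_0} + \varepsilon] \cap \mathcal{A}(\Sigma_s, \alpha) = \emptyset$ and $a_s \in (a_{s_0} - \varepsilon, a_{s_0} + \varepsilon)$ for every $s \in U$; the analogous statement holds with $b$ in place of $a$. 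Covering $[0,1]$ by such neighborhoods, extracting a finite subcover, and inserting extra division points at which $\Sigma_{s_p}$ is non-degenerate (which is generic), I obtain a subdivision $0 = s_0 < s_1 < \cdots < s_N = 1$ such that for every $p$ and every $s \in [s_p, s_{p+1}]$,
\[
\mathcal{A}(\Sigma_s, \alpha) \cap \bigl( [a_{s_{p+1}}, a_{s_p}] \cup [b_{s_{p+1}}, b_{s_p}] \bigr) = \emptyset.
\]

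With such a subdivision in hand, on each subinterval $[s_p, s_{p+1}]$ two maps are available to me. First, the inclusion/quotient map
\[
i : HF^{(a_{s_{p+1}}, b_{s_{p+1}})}(A_{\mathfrak{f}_{s_p}}, \alpha) \longrightarrow HF^{(a_{s_p}, b_{s_p})}(A_{\mathfrak{f}_{s_p}}, \alpha)
\]
from \eqref{eq:inclusionquotientmap} is an isomorphism, because by the subdivision property the action spectrum of $\Sigma_{s_p}$ avoids $[a_{s_{p+1}}, a_{s_p}]$ and $[b_{s_{p+1}}, b_{s_p}]$. Second, the monotone continuation map $\psi_{s_p}^{s_{p+1}} : HF^{(a_{s_{p+1}}, b_{s_{p+1}})}(A_{\mathfrak{f}_{s_p}}, \alpha) \to HF^{(a_{s_{p+1}}, b_{s_{p+1}})}(A_{\mathfrak{f}_{s_{p+1}}}, \alpha)$ of Section \ref{sub:Monotone-homotopies-and} is an isomorphism by Lemma \ref{lem:its an iso} applied to the constant window $(a_{s_{p+1}}, b_{s_{p+1}})$ along the restricted homotopy $(\mathfrak{f}_s)_{s \in [s_p, s_{p+1}]}$, since the same subdivision property says this window misses $\mathcal{A}(\Sigma_s, \alpha)$ for every $s$ in the subinterval. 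I then set $\theta_{s_p}^{s_{p+1}} := \psi_{s_p}^{s_{p+1}} \circ i^{-1}$ and take $\theta_0^1$ to be the composition of these elementary isomorphisms; independence from the choice of subdivision and from any refinement is the expected consequence of the two functorial properties of continuation maps listed immediately before Lemma \ref{lem:its an iso}.

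For the commuting triangle I would apply part (2) of Theorem \ref{thm:(Continuity-property-of} (naturality of continuation with respect to the inclusion/quotient map) at each subdivision step, obtaining commutative squares whose vertical arrows are the inclusion/quotient maps. By the subdivision property, these vertical arrows are isomorphisms at every interior stage, so they may be inverted and the resulting squares telescoped to produce $\psi_0^1 = i \circ \theta_0^1$. (For the triangle to be meaningful one implicitly needs $a_0, b_0 \notin \mathcal{A}(\Sigma_1, \alpha)$ so that $HF^{(a_0, b_0)}(A_{\mathfrak{f}_1}, \alpha)$ is defined in the first place.) The main obstacle of the argument is the subdivision step: although upper-semicontinuity of the action spectrum is standard, the subdivision must simultaneously avoid the entire swept-out intervals $[a_{s_{p+1}}, a_{s_p}]$ and $[b_{s_{p+1}}, b_{s_p}]$ on the whole $[s_p, s_{p+1}]$, rather than just at the two endpoints, which is what forces the compactness argument sketched above. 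Once the subdivision is fixed the remainder of the proof is routine homological algebra combined with a direct appeal to Lemma \ref{lem:its an iso}.
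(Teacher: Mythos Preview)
Your argument is essentially the same ``staircase'' method the paper uses: subdivide $[0,1]$ so that on each piece the swept-out intervals $[a_{s_{p+1}},a_{s_p}]$ and $[b_{s_{p+1}},b_{s_p}]$ avoid the action spectrum of every $\Sigma_s$ in that piece, then alternate inclusion/quotient isomorphisms with the monotone continuation isomorphism of Lemma~\ref{lem:its an iso} and compose. The only cosmetic differences are that the paper allows three independent subdivisions (one each for $a$, $b$, and $\Sigma$) rather than your single common one, and it applies the continuation map at the larger window $(a_{s_p},b_{s_p})$ before shrinking, whereas you shrink first and then apply $\psi$ at the smaller window; by part~(2) of Theorem~\ref{thm:(Continuity-property-of} these orderings agree, and your treatment of the commuting triangle via that same naturality is more explicit than the paper's.
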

\begin{proof}
The trick here is to use a {}``staircase'' method to deal with the
fact that the endpoints are changing. This is explained in detail
in \cite[p18]{MacariniSchlenk2010}, but the general idea is the following.
There exists $N\in\mathbb{N}$ and sequences \[
0=i_{0}<i_{1}<\dots<i_{N}=1;
\]
\[
0=j_{0}<j_{1}<\dots<j_{N}=1;
\]
\[
0=k_{0}<k_{1}<\dots<k_{N}=1
\]
such that for all $p\in\{0,1,\dots,N-1\}$, $\Sigma_{k_{p}}$ is non-degenerate
and \begin{equation}
\mathcal{A}(\Sigma_{s},\alpha)\cap[a_{i_{p+1}},a_{i_{p}}]=\mathcal{A}(\Sigma_{s},\alpha)\cap[b_{j_{p+1}},b_{j_{p}}]=\emptyset\ \ \ \mbox{for all }s\in[k_{p},k_{p+1}].\label{eq:staircase}
\end{equation}
We already know from the previous lemma how to build isomorphisms\[
\psi_{p}:HF^{(a_{i_{p}},b_{j_{p}})}(\Sigma_{k_{p}},\alpha)\rightarrow HF_{\mathcal{}}^{(a_{i_{p}},b_{j_{p}})}(\Sigma_{k_{p+1}},\alpha),
\]
and \eqref{eq:staircase} implies that\[
HF^{(a_{i_{p}},b_{i_{p}})}(\Sigma_{k_{p+1}},\alpha)\cong HF_{\mathcal{}}^{(a_{i_{p+1}},b_{j_{p+1}})}(\Sigma_{k_{p+1}},\alpha).
\]
Thus we obtain isomorphisms \[
\theta_{p}:HF_{\mathcal{}}^{(a_{i_{p}},b_{j_{p}})}(\Sigma_{k_{p}},\alpha)\rightarrow HF^{(a_{i_{p+1}},b_{j_{p+1}})}(\Sigma_{k_{p+1}},\alpha),
\]
and the proposition follows with \[
\theta_{0}^{1}:=\theta_{N-1}\circ\dots\circ\theta_{1}\circ\theta_{0}.
\]

\end{proof}

\subsection{Leaf-wise intersections }

$\ $\vspace{6 pt}

In this section we start with a single non-degenerate fibrewise starshaped
hypersurface $\Sigma$. As before, set $\ell:=\ell(\Sigma)$ and fix
once and for all a function $f\in\bigcap_{r>0}\mathcal{F}\left(\ell/12,r\right)$
and a defining Hamiltonian $F\in\mathcal{D}(\Sigma)$. Given a class
$\alpha\in[S^{1},M]$ and a map $\varphi\in\mbox{Ham}_{c}(T^{*}M,\omega)$,
let us denote by $n_{\Sigma,\alpha}(\varphi,(a,b))$ the number of
positive leaf-wise intersections points of $\varphi$ in $\Sigma$
that belong to $\alpha$ and have time-shift $T\in(a,b)$. The following
lemma explains the link between the Floer homology of a suitable perturbed
Rabinowitz action functional $A_{F^{\chi},f}^{H}$ and the number
of positive leaf-wise intersections of $\varphi$. The proof is immediate
from Lemma \ref{lem:key lemma}.2 and Theorem \ref{thm:admissable yes}.
\begin{lem}
\label{lem:lw intersections and frfh}Suppose $\varphi\in\mbox{\emph{Ham}}_{c}(T^{*}M,\omega)$
is generated by $H\in\mathcal{H}$. Choose $\chi\in\mathcal{X}_{0}$
and set $\mathfrak{f}:=(F,f,\chi,H)$. Fix $\ell/2<a<b<\infty$ such
that $H\in\mathcal{H}(a/2)$ and $a,b\notin\mathcal{A}(A_{\mathfrak{f}},\alpha)$.
If $\mathfrak{f}\in\mathfrak{F}_{0,\textrm{\emph{reg}}}''$ (which
we can assume is the case for a generic $\varphi$) then $HF^{(a,b)}(A_{\mathfrak{f}},\alpha)$
is well defined. Moreover, provided $\varphi$ has no periodic leaf-wise
intersection points (which again, we may assume is the case for a
generic $\varphi$ by Proposition \ref{prop:-generically no plwip})
one has \[
n_{\Sigma,\alpha}(\varphi,(a,b))\geq\dim\, HF^{(a,b)}(A_{\mathfrak{f}},\alpha).
\]

\end{lem}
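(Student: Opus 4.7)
The target inequality will be obtained as a composition
\[
\dim HF^{(a,b)}(A_{\mathfrak{f}},\alpha)\le \dim CF^{(a,b)}(A_{\mathfrak{f}},\alpha)=\#\textrm{Crit}^{(a,b)}(A_{\mathfrak{f}},\alpha)\le n_{\Sigma,\alpha}(\varphi,(a,b)).
\]
The middle equality holds by construction of the Floer chain complex as the $\mathbb{Z}_{2}$-vector space on the critical generators, and the first inequality holds for any chain complex. So the real content is to establish two things: the well-definedness of $HF^{(a,b)}(A_{\mathfrak{f}},\alpha)$, and the final inequality converting critical points into leaf-wise intersections. Both are meant to fall out of the preceding machinery.

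For well-definedness I will simply verify the hypotheses of Theorem \ref{thm:admissable yes}. Regularity $\mathfrak{f}\in\mathfrak{F}_{0,\textrm{reg}}''$ and avoidance $a,b\notin\mathcal{A}(A_{\mathfrak{f}},\alpha)$ are among the standing assumptions; the bound $H\in\mathcal{H}(a/2)$ is also assumed. To see that $f\in\mathcal{F}(F,a,b)$, recall the standing choice $f\in\bigcap_{r>0}\mathcal{F}(\ell/12,r)$ from Section \ref{sub:The-Floer-homology}: since $a>\ell/2$ one has $a/6>\ell/12$, so $\mathcal{F}(\ell/12,r)\subseteq \mathcal{F}(a/6,r)$ by Lemma \ref{lem:about the set F}, and taking $r>(b-a)/\rho_{1}$ with $\rho_{1}=\min\{\rho(F),a/4T(F)\}$ yields $f\in\mathcal{F}(F,a,b)$ per Definition \ref{def:strange def 2}. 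Finally one picks any fibrewise starshaped $S$ with $D(\Sigma)\subseteq D^{\circ}(S)$ and $\mathrm{supp}(X_{F})\subseteq D^{\circ}(S)$, and a generic $J\in\mathcal{J}_{\textrm{reg}}(\mathfrak{f})\cap\mathcal{J}(S)$. Theorem \ref{thm:admissable yes} then certifies $(\mathfrak{f},a,b,J)$ as $\alpha$-admissible, and Theorem \ref{thm:main theorem of rfh} furnishes the homology.

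For the second inequality I invoke Lemma \ref{lem:key lemma}.2. Because $\varphi$ has no periodic leaf-wise intersection points, the evaluation $e_{\mathfrak{f}}:\textrm{Crit}(A_{\mathfrak{f}})\to \mathrm{LW}_{+}(\Sigma,\varphi)$, $(x,\eta)\mapsto x(0)$, is injective; on $\textrm{Crit}(A_{\mathfrak{f}},\alpha)$ it produces positive leaf-wise intersection points in the free homotopy class $\alpha$ with time-shift $f(\eta)$. The action bounds of Lemma \ref{lem:action bounds lemma}, applied with $c=a/2\ge \kappa(H)$, pin down the time-shift $f(\eta)$ to the window determined by $A_{\mathfrak{f}}(x,\eta)\in(a,b)$, and this yields the desired count $\#\textrm{Crit}^{(a,b)}(A_{\mathfrak{f}},\alpha)\le n_{\Sigma,\alpha}(\varphi,(a,b))$.

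The argument offers no genuine obstacle, which is why the author rightly describes it as immediate: every ingredient has been prepared in the preceding sections. The only mild moment of care is threading the standing choice of $f\in\bigcap_{r>0}\mathcal{F}(\ell/12,r)$ through to the condition $f\in\mathcal{F}(F,a,b)$ demanded by Theorem \ref{thm:admissable yes}, which is what forced the a priori awkward definition of $\mathcal{F}(a,r)$ in the first place.
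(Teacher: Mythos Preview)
Your proposal is correct and follows the same approach as the paper, which records the proof as ``immediate from Lemma~\ref{lem:key lemma}.2 and Theorem~\ref{thm:admissable yes}.'' You have simply unpacked these two citations: verifying the hypotheses of Theorem~\ref{thm:admissable yes} (including the check $f\in\mathcal{F}(F,a,b)$ via the standing choice \eqref{eq:one f choice} and Lemma~\ref{lem:about the set F}) to get well-definedness, and then invoking the injectivity of $e_{\mathfrak f}$ from Lemma~\ref{lem:key lemma}.2 together with the action bound \eqref{eq:value of perturbed functional at crit} to convert generators into distinct leaf-wise intersection points.
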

Now set $\mathfrak{g}:=(F,f,\chi,0)$, and note that $\mathfrak{g}\in\mathfrak{F}_{0,\textrm{reg}}'$.
Our next application of continuation homomorphisms is to interpolate
between the Floer homology of the perturbed Rabinowitz action functional
$A_{\mathfrak{f}}$ and the non-perturbed one $A_{\mathfrak{g}}$.
This lemma is a simple consequence of Theorem \ref{thm:(Continuity-property-of}.
\begin{lem}
\label{lem:continutations with H}Assume in addition that \[
a-\left\Vert H\right\Vert _{-},a+\left\Vert H\right\Vert _{+},b-\left\Vert H\right\Vert _{-},b+\left\Vert H\right\Vert _{+}\notin\mathcal{A}(\Sigma,\alpha),
\]
Thus both $HF^{(a-\left\Vert H\right\Vert _{-},b-\left\Vert H\right\Vert _{-})}(A_{\mathfrak{g}},\alpha)$
and $HF^{(a+\left\Vert H\right\Vert _{+},b+\left\Vert H\right\Vert _{+})}(A_{\mathfrak{g}},\alpha)$
are well defined. Assume moreover that not only is $H\in\mathcal{H}(a/2)$
but actually \begin{equation}
2\left\Vert H\right\Vert +\kappa(H)\leq\frac{a}{2}.\label{eq:assumption}
\end{equation}
Then there exists a commutative diagram\[
\xymatrix{HF^{(a-\left\Vert H\right\Vert _{-},b-\left\Vert H\right\Vert _{-})}(A_{\mathfrak{g}},\alpha)\ar[rr]\ar[dr] &  & HF^{(a+\left\Vert H\right\Vert _{+},b+\left\Vert H\right\Vert _{+})}(A_{\mathfrak{g}},\alpha)\\
 & HF^{(a,b)}(A_{\mathfrak{f}},\alpha)\ar[ur]
}
\]
\end{lem}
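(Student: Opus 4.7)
The strategy is to realize both diagonal maps as continuation homomorphisms coming from Theorem \ref{thm:(Continuity-property-of}, and then to identify the horizontal map as an inclusion/quotient map via a homotopy-of-homotopies argument. Concretely, define the two interpolating families
\[
\mathfrak{f}_s^{\mathrm{on}} := (F, f, \chi, \beta(s) H), \qquad \mathfrak{f}_s^{\mathrm{off}} := (F, f, \chi, (1-\beta(s)) H), \qquad s \in [0,1],
\]
so that $\mathfrak{f}_0^{\mathrm{on}} = \mathfrak{f}_1^{\mathrm{off}} = \mathfrak{g}$ and $\mathfrak{f}_1^{\mathrm{on}} = \mathfrak{f}_0^{\mathrm{off}} = \mathfrak{f}$. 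For any $u \in \mathcal{N}(\nabla A_{\mathfrak{f}_s^{\mathrm{on}}}, \alpha)$ one computes
\[
\Delta_{s_0}^{s_1}(u) = -\int_{s_0}^{s_1} \beta'(s) \int_0^1 H(t,x(s,t))\,dt\,ds \leq \|H\|_-,
\]
and symmetrically $\Delta_{s_0}^{s_1}(u) \leq \|H\|_+$ for the off-family. Thus $\mathfrak{f}_s^{\mathrm{on}}$ is $\|H\|_-$-bounded and $\mathfrak{f}_s^{\mathrm{off}}$ is $\|H\|_+$-bounded, delivering the correct action shifts in the statement.

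The main obstacle is verifying Condition (3) of Definition \ref{def:admissible families}, namely the $L^\infty$ bounds on the $\eta$-component of homotopy trajectories, since we are no longer in the setting of the original proof of Proposition \ref{pro:useful dichotomy}. The key observation is that the constants $\rho$ and $T$ from Corollary \ref{cor:the linfty cor} depend only on $F$, which is \emph{fixed} along both homotopies, and that $\kappa(\beta(s)H), \kappa((1-\beta(s))H) \leq \kappa(H)$ since $\kappa$ is a seminorm. The hypothesis \eqref{eq:assumption} is tailored precisely to make the dichotomy argument survive the homotopy: from the estimates \eqref{eq:en eq}, \eqref{eq:sup eq}, \eqref{eq:inf eq} the asymptotic action values $f(\eta_\pm)$ are bounded below by $a - \|H\|_- - \kappa(H) - \|H\|_+$, which by \eqref{eq:assumption} exceeds $\ell/12$, so $\eta_\pm \geq \ell/12$ and $f'(\eta_\pm)\to 1$ as in \eqref{eq:limpos}. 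The cascade of inequalities in the proof of Proposition \ref{pro:useful dichotomy}, now applied with $b$ replaced by $b + \kappa(H)$ and $a$ replaced by $a - \|H\|_- - \kappa(H)$, then produces uniform constants $C_{\mathrm{mult}}$ for both families; the loop bound $C_{\mathrm{loop}}$ follows from Lemma \ref{lem:why convex good} applied to an appropriate choice of $J$. Theorem \ref{thm:(Continuity-property-of} then yields the two diagonal maps.

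For commutativity, concatenate the two homotopies into a family $(\mathfrak{h}_s)_{s\in[0,1]}$ interpolating $\mathfrak{g} \rightsquigarrow \mathfrak{f} \rightsquigarrow \mathfrak{g}$ via $H_s = \gamma(s) H$ for a cutoff $\gamma$ vanishing at both endpoints and with $\gamma \equiv 1$ on $[1/3, 2/3]$. The induced continuation map $HF^{(a-\|H\|_-,\,b-\|H\|_-)}(A_\mathfrak{g},\alpha) \to HF^{(a+\|H\|_+,\,b+\|H\|_+)}(A_\mathfrak{g},\alpha)$ is by standard gluing the composition of the two diagonals. On the other hand, $\gamma$ is homotopic (through nonnegative cutoffs) to the constant function $0$, and the standard parametric argument identifying continuation maps through homotopic homotopies shows that the loop continuation is chain homotopic to the trivial continuation of $\mathfrak{g}$ to itself, which at the level of filtered Floer homology is exactly the inclusion/quotient map $i$ of \eqref{eq:inclusionquotientmap}. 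This proves the diagram commutes and completes the proof.
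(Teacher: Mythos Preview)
Your proof is correct and follows essentially the same route as the paper: the linear homotopy $\mathfrak{f}_s=(F,f,\chi,sH)$ (your $\mathfrak{f}_s^{\mathrm{on}}$), the $\Delta$-bound $\leq\|H\|_-$ via $-\beta'(s)\int_0^1 H\,dt$, and the appeal to the proof of Proposition~\ref{pro:useful dichotomy} (made possible by \eqref{eq:assumption}) for the $C_{\mathrm{mult}}$ bound are exactly what the paper does, with the second diagonal ``defined similarly''. You are more explicit than the paper about the commutativity, which the authors leave implicit in the phrase ``simple consequence of Theorem~\ref{thm:(Continuity-property-of}''; your homotopy-of-homotopies argument deforming the concatenated loop to the constant homotopy at $\mathfrak{g}$ is the standard way to see this.
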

\begin{proof}
Let us first build the continuation map \begin{equation}
HF^{(a-\left\Vert H\right\Vert _{-},b-\left\Vert H\right\Vert _{-})}(A_{\mathfrak{g}},\alpha)\rightarrow HF^{(a,b)}(A_{\mathfrak{f}},\alpha).\label{eq:going one way}
\end{equation}
Set \[
\mathfrak{f}_{s}:=(F,f,\chi,sH)\ \ \ \mbox{for }s\in[0,1].
\]
We will verify that $\{(\mathfrak{f}_{s}),a-\left\Vert H\right\Vert _{-},b-\left\Vert H\right\Vert _{-},\left\Vert H\right\Vert _{-}\}$
forms an $\alpha$-admissible family in the sense of Definition \ref{def:admissible families}.
Condition (1) is satisfied by assumption. Suppose $u\in\mathcal{N}(\nabla A_{\mathfrak{f}_{s}},\alpha)$
and $-\infty\leq s_{0}\leq s_{1}\leq\infty$. This time we have \begin{align*}
\Delta_{s_{0}}^{s_{1}}(u) & =-\int_{s_{0}}^{s_{1}}\beta'(s)\int_{0}^{1}H(t,x)dtds\\
 & \leq\int_{0}^{1}\beta'(s)\left\Vert H\right\Vert _{-}ds\\
 & =\left\Vert H\right\Vert _{-}.
\end{align*}
Thus Condition (2) is satisfied. The reader may check that the stronger
assumption \eqref{eq:assumption} implies that the proof of Proposition
\ref{pro:useful dichotomy} goes through to provide the necessary
constant $C_{\textrm{mult}}>0$ to satisfy Condition (3). The existence
of the map \eqref{eq:going one way} now follows from Theorem \ref{thm:(Continuity-property-of}.
The second map is defined similarly. 
\end{proof}
Now set $\mathfrak{h}:=(F,f,1,0)$. We now want to interpolate between
the Floer homology of $A_{\mathfrak{g}}$ and the Floer homology of
$A_{\mathfrak{h}}$. 
\begin{lem}
\label{lem:chi}Suppose $\ell/2<a<b<\infty$ satisfy $a,b\notin\mathcal{A}(\Sigma,\alpha)$.
Then there exists an isomorphsim \[
HF^{(a,b)}(A_{\mathfrak{g}},\alpha)\rightarrow HF^{(a,b)}(A_{\mathfrak{h}},\alpha).
\]

\end{lem}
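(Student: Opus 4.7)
The plan is to adapt the strategy of Lemma \ref{lem:its an iso}, this time performing a continuation in the variable $\chi \in \mathcal{X}$ rather than in the defining Hamiltonian. Let $\chi_s := (1-s)\chi + s$ for $s \in [0,1]$, so that $\chi_0 = \chi$ and $\chi_1 \equiv 1$. A brief check shows $\chi_s \in \mathcal{X}$ for every $s$: one verifies $\bar{\chi}_s$ is strictly increasing on $[0,1]$ for $s>0$, and $\bar{\chi}_s(1)=1$. Set $\mathfrak{f}_s := (F, f, \chi_s, 0)$, so $\mathfrak{f}_0 = \mathfrak{g}$ and $\mathfrak{f}_1 = \mathfrak{h}$. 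The crucial observation is that, by Lemma \ref{lem:key lemma}(1), the critical set of $A_{\mathfrak{f}_s}$ is in action-preserving bijection with $\mathcal{P}(\Sigma)$ for every $s$; in particular $\mathcal{A}(A_{\mathfrak{f}_s},\alpha) = f(\mathcal{A}(\Sigma,\alpha))$ is independent of $s$.

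A direct computation gives
\[
\partial_s A_{\mathfrak{f}_s}(x,\eta) \;=\; -f(\eta)\int_0^1 (1-\chi(t))\, F(x(t))\,dt.
\]
Fix $\varepsilon > 0$ so small that $[a,a+\varepsilon] \cup [b,b+\varepsilon]$ is disjoint from $\mathcal{A}(\Sigma,\alpha)$, and choose a partition $0 = i_0 < i_1 < \dots < i_N = 1$ (to be refined). As already noted in Section \ref{sub:Monotone-homotopies-and}, the proof of Proposition \ref{pro:useful dichotomy} adapts to continuation families via \eqref{eq:en eq}--\eqref{eq:inf eq}, and since the constants $\rho(F), T(F)$ of Corollary \ref{cor:the linfty cor} depend only on $F$, one obtains a uniform $L^\infty$ bound on the $\eta$-component of any $u \in \mathcal{N}_{a}^{b+\varepsilon}(\nabla A_{\mathfrak{f}_s},\alpha)$, and hence on $f(\eta)$ along such flow lines. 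Substituting into the displayed formula, $|\Delta_{s_0}^{s_1}(u)|$ on the subinterval $[i_p,i_{p+1}]$ is $O(i_{p+1}-i_p)$, so for a sufficiently fine mesh every restricted family is $\varepsilon$-bounded in the sense of Definition \ref{def:admissible families}.

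On each piece Theorem \ref{thm:(Continuity-property-of} provides a continuation homomorphism
\[
\psi_p : HF^{(a,b)}(A_{\mathfrak{f}_{i_p}},\alpha) \longrightarrow HF^{(a+\varepsilon, b+\varepsilon)}(A_{\mathfrak{f}_{i_{p+1}}},\alpha),
\]
and by the choice of $\varepsilon$ the inclusion/quotient map \eqref{eq:inclusionquotientmap} identifies the target with $HF^{(a,b)}(A_{\mathfrak{f}_{i_{p+1}}},\alpha)$. Composing these maps over $p$ yields the desired $\psi : HF^{(a,b)}(A_\mathfrak{g},\alpha) \to HF^{(a,b)}(A_\mathfrak{h},\alpha)$. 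Running the identical argument on the reverse homotopy $s \mapsto \chi_{1-s}$ produces $\phi$ in the opposite direction, and the functorial properties of continuation maps recorded just before Lemma \ref{lem:its an iso} show $\phi \circ \psi = \mathbb{1} = \psi \circ \phi$. The one genuinely non-routine ingredient is the uniform $\eta$-bound for the continuation family; once that is in place, everything else is a packaging of constructions already developed in Sections \ref{sub:Floer-homology-of} and \ref{sub:Monotone-homotopies-and}.
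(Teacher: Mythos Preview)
Your overall plan---interpolating linearly between $\chi$ and the constant function $1$ and following the template of Lemma~\ref{lem:its an iso}---is exactly the argument the paper has in mind (the paper simply omits the proof, pointing to Lemma~\ref{lem:its an iso}). The observation that the action spectrum $\mathcal{A}(A_{\mathfrak{f}_s},\alpha)$ is independent of $s$ is correct and is precisely what makes this case ``slightly easier''.

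There is, however, a genuine circularity in your argument for the $L^{\infty}$ bound on $\eta$. You invoke the adaptation of Proposition~\ref{pro:useful dichotomy} to continuation families ``via \eqref{eq:en eq}--\eqref{eq:inf eq}''. But \eqref{eq:sup eq} and \eqref{eq:inf eq} only give useful action bounds along $u$ once $\Delta_{s_0}^{s_1}(u)$ is controlled, and your formula for $\Delta_{s_0}^{s_1}(u)$ contains the factor $f(\eta(s))$, which is precisely what you are trying to bound. In Section~\ref{sub:Monotone-homotopies-and} this loop is broken by monotonicity ($\Delta\le 0$); in the proof of Lemma~\ref{lem:its an iso} it is broken in the reverse direction by replacing $f$ with an $R$-truncation $\bar f$ (Section~\ref{sub:Truncating-the-function}), so that $\bar f(\eta)\le R$ and hence $|\Delta_{s_0}^{s_1}(u)|$ is bounded a priori. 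Your homotopy in $\chi$ has no monotonicity in either direction, so you must use the truncation device for both the forward and the backward map: pass to $\bar{\mathfrak{f}}_s=(F,\bar f,\chi_s,0)$, obtain the a priori bound $|\Delta_{s_0}^{s_1}(u)|\le 2R\,\|F\|_{\infty}\int_0^1|1-\chi(t)|\,dt\cdot|i_{p+1}-i_p|$, then choose the mesh so this is $\le\varepsilon$, run Proposition~\ref{pro:useful dichotomy}, and finally use the content of Section~\ref{sub:Truncating-the-function} to remove the truncation. With that correction your proof goes through.
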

This lemma is proved in a similar fashion (in fact it's slightly easier)
to the proof of Lemma \ref{lem:its an iso}, and as such we omit the
proof. Putting the results of this section together we deduce:
\begin{cor}
\label{cor:lw bound}Assume the hypotheses of Lemma \ref{lem:continutations with H}.
Then \[
n_{\Sigma,\alpha}(\varphi,(a,b))\geq\mbox{\emph{rank}}\left\{ i:HF^{(a-\left\Vert H\right\Vert _{-},b-\left\Vert H\right\Vert _{-})}(A_{\mathfrak{h}},\alpha)\rightarrow HF^{(a+\left\Vert H\right\Vert _{+},b+\left\Vert H\right\Vert _{+})}(A_{\mathfrak{h}},\alpha)\right\} .
\]

\end{cor}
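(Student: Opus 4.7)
The plan is to chain together the three preceding lemmas of this section. First, by Lemma~\ref{lem:lw intersections and frfh}, since $H\in\mathcal{H}(a/2)$ and $a,b\notin\mathcal{A}(A_{\mathfrak{f}},\alpha)$, we have the bound
\[
n_{\Sigma,\alpha}(\varphi,(a,b))\geq\dim HF^{(a,b)}(A_{\mathfrak{f}},\alpha).
\]
Thus it suffices to show that the right-hand rank in the statement is at most $\dim HF^{(a,b)}(A_{\mathfrak{f}},\alpha)$.

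Next, apply Lemma~\ref{lem:continutations with H}: under the assumption \eqref{eq:assumption}, the inclusion/quotient map $i_{\mathfrak{g}}:HF^{(a-\|H\|_{-},b-\|H\|_{-})}(A_{\mathfrak{g}},\alpha)\to HF^{(a+\|H\|_{+},b+\|H\|_{+})}(A_{\mathfrak{g}},\alpha)$ factors through $HF^{(a,b)}(A_{\mathfrak{f}},\alpha)$. Consequently,
\[
\operatorname{rank}(i_{\mathfrak{g}})\leq\dim HF^{(a,b)}(A_{\mathfrak{f}},\alpha).
\]
It remains to identify $\operatorname{rank}(i_{\mathfrak{g}})$ with the rank appearing in the statement, which involves $\mathfrak{h}=(F,f,1,0)$ rather than $\mathfrak{g}=(F,f,\chi,0)$.

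For this, Lemma~\ref{lem:chi} provides isomorphisms
\[
\Phi^{\pm}:HF^{(a\pm\|H\|_{\pm},\,b\pm\|H\|_{\pm})}(A_{\mathfrak{g}},\alpha)\xrightarrow{\ \cong\ } HF^{(a\pm\|H\|_{\pm},\,b\pm\|H\|_{\pm})}(A_{\mathfrak{h}},\alpha).
\]
The key remaining point, and the only non-routine one, is that these isomorphisms intertwine $i_{\mathfrak{g}}$ with its $\mathfrak{h}$-analogue $i_{\mathfrak{h}}$, i.e.\ the square
\[
\xymatrix{HF^{(a-\|H\|_{-},b-\|H\|_{-})}(A_{\mathfrak{g}},\alpha)\ar[r]^{i_{\mathfrak{g}}}\ar[d]_{\Phi^{-}} & HF^{(a+\|H\|_{+},b+\|H\|_{+})}(A_{\mathfrak{g}},\alpha)\ar[d]^{\Phi^{+}}\\
HF^{(a-\|H\|_{-},b-\|H\|_{-})}(A_{\mathfrak{h}},\alpha)\ar[r]_{i_{\mathfrak{h}}} & HF^{(a+\|H\|_{+},b+\|H\|_{+})}(A_{\mathfrak{h}},\alpha)}
\]
commutes. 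This is the naturality of the Lemma~\ref{lem:chi} continuation isomorphism with respect to the inclusion/quotient maps of Section~\ref{sub:Inclusion/Quotient-maps}: on the chain level, the continuation map $\Phi$ constructed via the homotopy from $\chi$ to $1$ preserves the action filtration up to the chosen window, so it descends compatibly to the subcomplex $CF^{(a-\|H\|_{-},b-\|H\|_{-})}$ and the quotient complex $CF^{(a+\|H\|_{+},b+\|H\|_{+})}$, precisely the content of part (2) of Theorem~\ref{thm:(Continuity-property-of}. This is the one step requiring a brief verification; everything else is formal.

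Once this square commutes, $\operatorname{rank}(i_{\mathfrak{g}})=\operatorname{rank}(i_{\mathfrak{h}})$, and chaining the three inequalities yields
\[
n_{\Sigma,\alpha}(\varphi,(a,b))\geq\dim HF^{(a,b)}(A_{\mathfrak{f}},\alpha)\geq\operatorname{rank}(i_{\mathfrak{g}})=\operatorname{rank}(i_{\mathfrak{h}}),
\]
which is the claimed inequality.
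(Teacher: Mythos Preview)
Your argument is correct and is exactly what the paper intends: the paper does not spell out a proof but simply writes ``Putting the results of this section together we deduce'' before stating the corollary, and you have correctly identified the chain Lemma~\ref{lem:lw intersections and frfh} $\Rightarrow$ Lemma~\ref{lem:continutations with H} $\Rightarrow$ Lemma~\ref{lem:chi}, together with the naturality check (via Theorem~\ref{thm:(Continuity-property-of}.(2)) needed to pass the inclusion/quotient map from $\mathfrak{g}$ to $\mathfrak{h}$.
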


\section{\label{sec:The-convex-case}The convex case}

\subsection{\label{sub:-estimates-for}$L^{\infty}$ estimates for Hamiltonians
that are not constant outside a compact set}

$\ $\vspace{6 pt}

Throughout this section, assume $\Sigma\subseteq T^{*}M$ is a\textbf{
strictly fibrewise convex }non-degenerate fibrewise starshaped hypersurface.
By this we mean a fibrewise starshaped hypersurface with the additional
property that for each $q\in M$ the hypersurface $\Sigma\cap T_{q}^{*}M$
in $T_{q}^{*}M$ has a positive definite second fundamental form.
Let $\ell:=\ell(\Sigma)$, and fix once and for all a function $f\in\bigcap_{r>0}\mathcal{F}(\ell/12,r)$. 

For each $q\in M$, let $r_{q}:T_{q}^{*}M\rightarrow\mathbb{R}$ denote
the function that is homogeneous of degree $2$ and satisfies $r_{q}|_{\Sigma\cap T_{q}^{*}M}\equiv1$.
The function $(q,p)\mapsto r_{q}(p)$ is $C^{1}$ on all of $T^{*}M$,
but not necessarily smooth at the zero section. In order to correct
this, let $\rho:\mathbb{R}\rightarrow\mathbb{R}$ denote a smooth
function such that $\rho(s)=0$ for $s\leq\varepsilon$, and $\rho'(s)>0$
for $s>\varepsilon$, and $\rho(s)=s$ for $s\geq2\varepsilon$, where
$\varepsilon$ is some sufficiently small positive number. Then define
$F:T^{*}M\rightarrow\mathbb{R}$ by \begin{equation}
F(q,p):=\frac{1}{2}(\rho(r_{q}(p))-1).\label{eq:nice H}
\end{equation}
If $(q,p)\in\Sigma$ then \[
\lambda(X_{F}(q,p))=\omega(Y(q,p),X_{F}(q,p))=d_{(q,p)}F(Y(q,p))=r_{q}(p)=1.
\]
Of course, the function $F$ is not compactly supported, and thus
$F\notin\mathcal{D}(\Sigma)$, and hence $F$ cannot a priori be used
to compute the $\mathcal{F}$-Rabinowitz Floer homology of $\Sigma$.
In order to make it compactly supported, we truncate it at infinity.
Given $R>1$, let $F_{R}:T^{*}M\rightarrow\mathbb{R}$ denote a function
such that $F_{R}(q,p)=F(q,p)$ on $\{F\leq R-1\}$ and such that $F_{R}(q,p)=R$
on $\{F\geq R+1\}$. Then $F_{R}\in\mathcal{D}(\Sigma)$, and the
aim of this section is to compute $HF^{(3\ell/4,\infty)}(A_{F_{R},f},\alpha)$
for each $\alpha\in[S^{1},M]$.

The following result is highly non-trivial, and is taken from \cite[Section 3]{AbbondandoloSchwarz2009}
(the function $f$ makes no difference here, given that we know a
priori\emph{ }that the $\eta$-component of elements $u\in\mathcal{M}^{(a,b)}(\nabla_{J}A_{F_{R_{1}},f})$
are uniformly bounded). 
\begin{thm}
\label{thm:quadratic is compact}Let $S$ denote a fibrewise starshaped
hypersurface such that $D(\Sigma)\subseteq D^{\circ}(S)$ and such
that $\mbox{\emph{supp}}(X_{F})\subseteq D^{\circ}(S)$. Choose $J\in\mathcal{J}(S)$.
Choose $0<a<b<\infty$ such that $a,b\notin\mathcal{A}(\Sigma,\alpha)$.
Then there exists $R_{1},R_{0}>1$ with $R_{1}>R_{0}+1$ such that
if $u=(x,\eta)\in\mathcal{M}^{(a,b)}(\nabla_{J}A_{F_{R_{1}},f})$
then $x(\mathbb{R}\times S^{1})\subseteq\{F\leq R_{0}\}$. 
\end{thm}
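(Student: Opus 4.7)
The statement is essentially taken from Abbondandolo--Schwarz \cite{AbbondandoloSchwarz2009}, and the plan is to verify that their maximum-principle argument goes through unchanged for the perturbed action functional $A_{F_{R_1},f}$. The entire difference between our setting and theirs lies in the replacement of the Lagrange multiplier $\eta$ by $f(\eta)$ in the loop equation, and this replacement is harmless once a uniform bound on $\eta$ is in hand.

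\emph{Step 1 (reduction to a bounded multiplier).} By Proposition \ref{pro:useful dichotomy}, there is a constant $C_{\mathrm{mult}}=C_{\mathrm{mult}}(a,b,f)>0$ such that every $u=(x,\eta)\in\mathcal{M}^{(a,b)}(\nabla_{J}A_{F_{R},f})$ satisfies $\|\eta\|_{L^{\infty}}\leq C_{\mathrm{mult}}$, independently of $R$. In particular $|f(\eta(s))|\leq f(C_{\mathrm{mult}})=:M$ is uniformly bounded. Thus in the loop component of the gradient equation
\[
\partial_{s}x+J_{t}(x)\bigl(\partial_{t}x-f(\eta)\,X_{F_{R}}(x)\bigr)=0
\]
one may treat the coefficient $f(\eta(s))$ as a uniformly bounded smooth function of $s$, playing exactly the role that $\eta(s)$ plays in \cite{AbbondandoloSchwarz2009}.

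\emph{Step 2 (subharmonic inequality).} The heart of the argument is to derive an elliptic inequality for $\phi(s,t):=F(x(s,t))$ on the region $\{\phi\geq\phi_{0}\}$ for some suitable $\phi_{0}>0$. For this one exploits three features: (i) the strict fibrewise convexity of $\Sigma$; (ii) the homogeneity of $r_{q}$, which yields the Euler-type identity $\lambda(X_{F})=2F+1$ on the region $\{F\geq 2\varepsilon\}$; and (iii) the convexity hypothesis on $J$ on the complement of $D^{\circ}(S)$ (Definition \ref{def:acs def}). A computation carried out in detail in \cite[Section 3]{AbbondandoloSchwarz2009} then produces a differential inequality of subharmonic type,
\[
(\partial_{s}^{2}+\partial_{t}^{2})\phi\;\geq\;-A\,\bigl|f(\eta)\bigr|^{2}-B\;\geq\;-AM^{2}-B,
\]
valid wherever $\phi$ exceeds an explicit threshold depending only on $\Sigma$. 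This is the step that makes the theorem ``highly non-trivial,'' and is the main obstacle: one must match the convex geometry of $\Sigma$ with the convex almost complex structure to get the correct sign.

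\emph{Step 3 (maximum principle and choice of $R_{1}$).} Since the asymptotic limits $v_{\pm}$ of $u$ are critical points of $A_{F_{R},f}$, which by Lemma \ref{lem:key lemma}.1 correspond to Reeb orbits on $\Sigma=\{F=0\}$, one has $\phi(s,t)\to 0$ as $s\to\pm\infty$. Hence $\phi$ attains its supremum in the interior of $\mathbb{R}\times S^{1}$ (or is identically zero); combining this with the inequality from Step 2 and a standard elliptic bootstrap gives a uniform bound $\sup\phi\leq R_{0}$, where $R_{0}$ depends only on $a$, $b$, $\Sigma$, and $C_{\mathrm{mult}}$. Now choose any $R_{1}>R_{0}+1$. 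Since $F_{R_{1}}\equiv F$ on the set $\{F\leq R_{1}-1\}\supseteq\{F\leq R_{0}\}$, the hypothetical solution we studied (for the actual truncated Hamiltonian $F_{R_{1}}$) coincides with a solution for $F$ in the region where the bound applies, so the a priori estimate $x(\mathbb{R}\times S^{1})\subseteq\{F\leq R_{0}\}$ is self-consistent, as required.
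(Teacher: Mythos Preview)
Your proposal is aligned with the paper's treatment: the paper gives no proof of this theorem at all, but simply states that it is ``taken from \cite[Section 3]{AbbondandoloSchwarz2009}'' together with the parenthetical remark that ``the function $f$ makes no difference here, given that we know a priori that the $\eta$-component \dots\ is uniformly bounded'' --- which is exactly your Step~1.

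One caveat on your sketch of Steps~2--3: you list the convexity of $J$ outside $D^{\circ}(S)$ as a key ingredient, but the paper explicitly warns (in the paragraph following the theorem) that the convexity argument of Lemma~\ref{lem:why convex good} only produces some $R_{2}$ with no guarantee that $R_{2}<R_{1}-1$; the actual content of \cite{AbbondandoloSchwarz2009} is an $L^{\infty}$ estimate coming from the quadratic growth of $F$ itself (via $L^{2}$ bounds and elliptic estimates), not a maximum principle on the convex end. Your ``self-consistent'' remark in Step~3 also glosses over the passage from the untruncated $F$ to $F_{R_{1}}$, which the paper flags as a separate nontrivial step.
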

In other words, as far as the gradient flow lines $u\in\mathcal{M}^{(a,b)}(\nabla_{J}A_{F,f})$
are concerned, we might as well not have truncated $F$ at all. This
result is not obvious; although Lemma \ref{lem:why convex good} implies
that there certainly exists $R_{2}>0$ such that if $u=(x,\eta)\in\mathcal{M}^{(a,b)}(\nabla_{J}A_{F_{R_{1}},f})$
then $x(\mathbb{R}\times S^{1})\subseteq\{F\leq R_{2}\}$, there is
absolutely no reason at all why we should have $R_{1}>R_{2}+1$. In
order to prove this result, one must first show that one can obtain
$L^{\infty}$ bounds for the Hamiltonian $F$ \textbf{without}\emph{
}first truncating it at infinity, and then show that these bounds
are unaffected if we then subsequently truncate $F$ at some sufficiently
large $R>0$. This last statement is only true because we restrict
to the action interval $(a,b)$. In other words, this proves the Floer
homology $HF^{(a,b)}(A_{F,f},\alpha)$ is well defined if we use the
Hamiltonian $F$, and moreover with a little more work this shows
that the Floer homology $HF^{(a,b)}(A_{F,f},\alpha)$ is isomorphic
to Floer homology $HF^{(a,b)}(A_{F_{R},f},\alpha)$. An alternative
proof of Theorem \ref{thm:quadratic is compact} is given in \cite[Section 6]{Merry2010a}.
Anyway, because of Theorem \ref{thm:quadratic is compact}, we may
as well work directly with the Hamiltonian $F$, rather than truncating
it at infinity. This is crucial for Theorem \ref{thm:precise version of theorem e}
below.

\subsection{The $f$-free time action functional}

$\ $\vspace{6 pt}

The Hamiltonian $F$ has positive definite fibrewise second differential,
and thus the \textbf{Fenchel transform} $L:TM\rightarrow\mathbb{R}$
is well defined. Explicitly, $L$ is the unique Lagrangian on $TM$
defined by \[
L(q,v):=\max_{p\in T_{q}M}\left\{ p(v)-F(q,p)\right\} .
\]
The \textbf{Legendre transformation} associated to $L$ is the diffeomorphism
$TM\cong T^{*}M$ given by $(q,v)\mapsto\left(q,\frac{\partial L}{\partial v}(q,v)\right)$.
One can recover $F$ from $L$ via\[
F(q,p)=\frac{\partial L}{\partial v}(q,v)(v)-L(q,v)\ \ \ \mbox{where }\frac{\partial L}{\partial v}(q,v)=p.
\]
Fix a Riemannian metric $g$ on $M$ for the remainder of this section.
There exist constants $c_{0},c_{1}>0$ such that for all $(q,v)\in TM$,
\begin{equation}
d_{v}^{2}(L|_{T_{q}M})\geq c_{0}\mathbb{1};\label{eq:aseq1}
\end{equation}
\begin{equation}
\left|\nabla_{vv}L(q,v)\right|\leq c_{1},\ \ \ \left|\nabla_{vq}L(q,v)\right|\leq c_{1}(1+\left|v\right|),\ \ \ \left|\nabla_{qq}L(q,v)\right|\leq c_{1}(1+\left|v\right|^{2}),\label{eq:aseq2}
\end{equation}
where $\nabla_{vv}$, $\nabla_{vq}$ and $\nabla_{qq}$ denote the
components of the Hessian of $L$ associated to the horizontal-vertical
splitting of $TTM$ induced by $g$. See \cite[Section 10]{AbbondandoloSchwarz2009}.

Define the \textbf{$f$-free time action functional} $S_{L,f}:\Lambda M\times\mathbb{R}\rightarrow\mathbb{R}$
by \[
S_{L,f}(q,\eta):=f(\eta)\int_{0}^{1}L\left(q,\frac{\dot{q}}{f(\eta)}\right)dt.
\]
Denote by $\mbox{Crit}(S_{L,f})$ the set of critical points of $S_{L,f}$.
We wish to do Morse theory with $S_{L,f}$, and as such we will work
with the completion $\widetilde{\Lambda}M$ of $\Lambda M$ in the
Sobolev $W^{1,2}$-norm. Given $a>0$ and $\alpha\in[S^{1},M]$ let
us abbreviate \begin{equation}
\mathbb{S}_{\alpha}^{a}:=\{(q,\eta)\in\widetilde{\Lambda}_{\alpha}M\times\mathbb{R}\,:\, S_{L,f}(q,\eta)<a\}.\label{eq:Saalpha}
\end{equation}

It is convenient to define\[
E_{L}(q,v):=\frac{\partial L}{\partial v}(q,v)(v)-L(q,v);
\]
one calls $E_{L}$ the \textbf{energy} of $L$. If $\frac{\partial L}{\partial v}(q,v)=p$
then $F(q,p)=E_{L}(q,v)$. 

Here is another way to interpret the elements of $\mbox{Crit}(S_{L,f})$.
Given $(q,\eta)\in\Lambda M\times\mathbb{R}$, let $\gamma:\mathbb{R}/f(\eta)\mathbb{Z}\rightarrow M$
denote the curve \[
\gamma(t):=q(t/f(\eta)).
\]
Then $(q,\eta)\in\mbox{Crit}(S_{L,f})$ if and only if $\gamma$ satisfies
the \textbf{Euler-Lagrange equations}\emph{ }for $L$:\begin{equation}
\frac{d}{dt}\frac{\partial L}{\partial v}(\gamma(t),\dot{\gamma}(t))=\frac{\partial L}{\partial q}(\gamma(t),\dot{\gamma}(t)),\label{eq:EL eq}
\end{equation}
and has energy equal to 0:\[
E_{L}(\gamma(t),\dot{\gamma}(t))\equiv0.
\]
The condition that $\Sigma$ is non-degenerate translates to the following
statement about the critical points of $S_{L.f}$:
\begin{lem}
Every critical point $(q,\eta)$ of $S_{L,f}$ is \textbf{\emph{non-degenerate}}\emph{
}in the sense that the space of non-zero Jacobi vector fields along
the corresponding solution $\gamma$ of \eqref{eq:EL eq} is one dimensional,
spanned by $(\dot{\gamma},0)$.
\end{lem}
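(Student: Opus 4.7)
The plan is to reduce the statement to the (by now standard) Morse--Bott property of the Rabinowitz action functional $A_{F,f}$ at transversely nondegenerate Reeb orbits, via the Legendre correspondence between $S_{L,f}$ and $A_{F,f}$.

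First, I would use the Legendre transform to identify $\operatorname{Crit}(S_{L,f})$ with a subset of $\operatorname{Crit}(A_{F,f})$. Given $(q,\eta)\in\operatorname{Crit}(S_{L,f})$, set $x(t) := (q(t), \partial_v L(q(t), \dot q(t)/f(\eta)))$; then $(x,\eta)$ satisfies the critical point equations \eqref{eq:third eq} for the Rabinowitz functional (with $\chi \equiv 1$ and $H \equiv 0$) and the loop $x$ lies on $\Sigma$. The associated Reeb orbit is $\widetilde x(s):=x(s/f(\eta))$, which together with $T:=f(\eta)$ is an element of $\mathcal{P}(\Sigma)$. The natural $S^1$-symmetry (time translation) acts on both critical sets and this identification is $S^1$-equivariant.

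Second, I would carry out a Hessian comparison. The quadratic estimates \eqref{eq:aseq1}--\eqref{eq:aseq2} imply that the fibrewise Legendre map extends to a bounded invertible linear isomorphism on Sobolev tangent spaces, and by a direct computation the second differentials of $S_{L,f}$ and $A_{F,f}$ at corresponding critical points are conjugate under this isomorphism (together with the reparametrization $t \mapsto t/f(\eta)$). In particular, their kernels are isomorphic, and the tangent vector $(\dot q,0)$ to the $S^1$-orbit on the Lagrangian side is mapped, up to reparametrization, to the vector $(\dot x,0) = (X_F(x),0) = (R_\Sigma(\widetilde x),0)$ on the Hamiltonian side.

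Third, I would invoke transverse nondegeneracy. By a standard computation in Rabinowitz Floer theory (see for instance \cite{CieliebakFrauenfelder2009}), the kernel of $\operatorname{Hess}\, A_{F,f}$ at a critical point $(x,\eta)$ is exactly one-dimensional --- generated by the tangent $(\dot x, 0)$ to the $S^1$-orbit --- if and only if the restriction of $d\phi_T^{R_\Sigma}$ to $\ker \lambda_\Sigma(\widetilde x(0))$ has no eigenvalue equal to $1$. The key point is that a nonzero variation in the $\eta$-direction (infinitesimal period change) produces, via the linearization $\dot Z = dX_F(x) Z + b X_F(x)$, a forcing term along $X_F$ that is not in the image of $d\phi_T^{X_F} - \mathrm{Id}$ restricted to $T\Sigma$, precisely because the transverse Poincaré map lacks the eigenvalue $1$; so the $\eta$-direction cannot contribute to the kernel. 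Non-degeneracy of $\Sigma$ (Definition \ref{def:non degen}) supplies the eigenvalue condition, and combining with the first two steps finishes the proof.

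The main obstacle is the Hessian comparison in the second step. Although the Lagrangian--Hamiltonian correspondence for critical points is classical, the free-period parameter $\eta$ couples into both functionals, and one must verify that the $W^{1,2}$ Legendre bijection together with the reparametrization $t\mapsto t/f(\eta)$ intertwines the Hessians; this is where the quadratic growth hypotheses \eqref{eq:aseq1}--\eqref{eq:aseq2} on $L$ are essential. Once this bookkeeping is complete, the conclusion that the only Jacobi direction is $(\dot\gamma, 0)$ is immediate from the standard Morse--Bott analysis of the Reeb flow.
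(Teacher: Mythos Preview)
The paper does not actually supply a proof of this lemma; it is presented as an immediate reformulation of the hypothesis that $\Sigma$ is non-degenerate (the sentence preceding the lemma reads ``The condition that $\Sigma$ is non-degenerate translates to the following statement about the critical points of $S_{L,f}$''). Your outline via the Legendre correspondence and Hessian comparison is precisely the content implicit in that sentence, and it is correct.

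One remark: your detour through the Rabinowitz functional $A_{F,f}$ is a bit more than is strictly needed. The same conclusion follows by observing directly that the Legendre transform conjugates the linearized Euler--Lagrange flow on the energy level $\{E_L=0\}$ to the linearized Reeb flow on $\Sigma$, so periodic Jacobi fields along $\gamma$ correspond to fixed vectors of $d\phi_T^{R_\Sigma}$ on $T_{\widetilde x(0)}\Sigma$; transverse non-degeneracy then forces any such vector to be a multiple of $R_\Sigma$, which pulls back to $(\dot\gamma,0)$. The $\eta$-direction is excluded because an infinitesimal period change would produce a closed orbit of nearby period on $\Sigma$, contradicting the isolatedness of $T$ in the period spectrum that transverse non-degeneracy entails. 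This is the same mechanism you describe in your third step, just phrased without the ambient Rabinowitz Hessian; either route is fine.
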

Let us denote by $i_{S_{L,f}}(q,\eta)$ the \textbf{Morse index} of
a critical point $(q,\eta)$ of $S_{L,f}$. Since $L$ is a fibrewise
strictly convex superlinear Lagrangian, the Morse index $i_{S_{L.f}}(q,\eta)$
is finite for every $(q,\eta)\in\mbox{Crit}(S_{L,f})$ \cite{Duistermaat1976}.
The following lemma clarifies the relationship between the functionals
$S_{L,f}$ and $A_{F,f}$.
\begin{lem}
\label{lem:key lemma-1}There exists a map $\mathcal{Z}=\mathcal{Z}(L,f):\widetilde{\Lambda}M\times\mathbb{R}\rightarrow\widetilde{\Lambda}T^{*}M\times\mathbb{R}$
such that $(\pi_{*}\times\mathbb{1})\circ\mathcal{Z}=\mathbb{1}$
(where $\pi_{*}:\widetilde{\Lambda}T^{*}M\rightarrow\widetilde{\Lambda}M$
is the induced map $(\pi_{*}(x))(t):=\pi(x(t))$), and such that $\mathcal{Z}$
restricts to define a bijection $\mbox{\emph{Crit}}(S_{L.f})\rightarrow\mbox{\emph{Crit}}(A_{F,f})$.
Moreover, given any $(x,\eta)\in\widetilde{\Lambda}T^{*}M\times\mathbb{R}$,
we have \[
A_{F,f}(x,\eta)\leq S_{L,f}(\pi\circ x,\eta))
\]
 with equality if and only if $(x,\eta)=\mathcal{Z}(\pi\circ x,\eta)$. 

Finally, the map $\mathcal{Z}$ preserves the grading: for any $(q,\eta)\in\mbox{\emph{Crit}}(S_{L,f})$,
if $\mathcal{Z}(q,\eta)=:(x,\eta)$ then\[
i_{S_{L,f}}(q,\eta)=\mu_{\textrm{\emph{CZ}}}^{\tau}(x;f(\eta)F).
\]
\end{lem}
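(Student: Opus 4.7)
The plan is to define $\mathcal{Z}$ by applying the fibrewise Legendre transform associated to $L$, and then deduce each of the four assertions from standard properties of Fenchel-Legendre duality. Concretely, for $(q,\eta)\in\widetilde{\Lambda}M\times\mathbb{R}$ I would set
\[
\mathcal{Z}(q,\eta):=(x,\eta),\qquad x(t):=\left(q(t),\;\frac{\partial L}{\partial v}\Bigl(q(t),\tfrac{\dot q(t)}{f(\eta)}\Bigr)\right).
\]
By the bounds \eqref{eq:aseq1}, \eqref{eq:aseq2}, the Legendre transform $TM\to T^*M$ sending $(q,v)\mapsto(q,\partial L/\partial v)$ is a global diffeomorphism inverse to $(q,p)\mapsto(q,\partial F/\partial p)$; a standard check shows $\mathcal{Z}$ descends to the indicated loop-space completion, and by construction $\pi\circ x=q$, i.e.\ $(\pi_*\times\mathbb{1})\circ\mathcal{Z}=\mathbb{1}$.

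The inequality $A_{F,f}(x,\eta)\le S_{L,f}(\pi\circ x,\eta)$ is an immediate consequence of the pointwise Fenchel inequality: for $(q,p)\in T^*M$ and $v\in T_qM$,
\[
p(v)\le L(q,v)+F(q,p),
\]
with equality iff $p=\partial L/\partial v(q,v)$. Applied with $v=\dot q/f(\eta)$ and integrated over $[0,1]$, this gives
\[
\int_0^1 p\,\dot q\,dt\le f(\eta)\int_0^1 L\bigl(q,\dot q/f(\eta)\bigr)dt+f(\eta)\int_0^1 F(x)dt,
\]
i.e.\ $A_{F,f}(x,\eta)\le S_{L,f}(\pi\circ x,\eta)$, with equality exactly when $(x,\eta)=\mathcal{Z}(\pi\circ x,\eta)$.

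For the critical-point bijection I would exploit the fact that the Legendre transform intertwines the Euler-Lagrange flow of $L$ on the zero energy level with the Hamiltonian flow of $F$ on $\{F=0\}$. The $q$-variation of $S_{L,f}$ yields the Euler-Lagrange equation for $\gamma(\tau):=q(\tau/f(\eta))$, while the $\eta$-variation (using $f'>0$) forces the constant of motion $E_L(\gamma,\dot\gamma)$ to vanish. Under $\mathcal{Z}$ this is exactly $\dot x=f(\eta)X_F(x)$ together with $\int_0^1 F(x)dt=0$; autonomy of $F$ promotes the integral condition to $F(x)\equiv 0$, so $(x,\eta)\in\mathrm{Crit}(A_{F,f})$ (with $\chi\equiv 1$). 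The converse direction is identical, running the Legendre transform backwards.

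The main obstacle is the index identity $i_{S_{L,f}}(q,\eta)=\mu^{\tau}_{\mathrm{CZ}}(x;f(\eta)F)$. In the fixed-period case (no $\eta$-variable) this is the Duistermaat-Viterbo equality between the Morse index of the Lagrangian action and the Conley-Zehnder index of the associated Hamiltonian orbit, proved in the sign conventions of the present paper by Abbondandolo-Portaluri-Schwarz \cite{AbbondandoloPortaluriSchwarz2008}. In our free-period setting the only new feature is the extra $\eta$-direction: by the non-degeneracy lemma just stated, the kernel at $(q,\eta)$ is one-dimensional and spanned by $(\dot\gamma,0)$, which corresponds precisely to the degenerate Reeb-flow direction that is quotiented out in passing from $\mu_{\mathrm{CZ}}$ to the transverse index $\mu^{\tau}_{\mathrm{CZ}}$. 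Following the computations of \cite[Section 3]{AbbondandoloSchwarz2009} one checks that the Hessian of $S_{L,f}$ at $(q,\eta)$ decouples along this invariant direction into a standard fixed-period Lagrangian Hessian plus a trivial direction, matching the corresponding decomposition of the crossing form defining $\mu^{\tau}_{\mathrm{CZ}}$. The classical Duistermaat-Viterbo equality then yields the transverse version claimed.
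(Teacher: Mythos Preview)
Your proof is correct and follows the same route as the paper's own proof: define $\mathcal{Z}$ via the fibrewise Legendre transform, derive the action inequality from the pointwise Fenchel inequality, obtain the critical-point bijection from the usual Lagrangian/Hamiltonian correspondence, and invoke Duistermaat's Morse index theorem (in the form proved by Abbondandolo--Portaluri--Schwarz and adapted to the free-period setting in \cite{AbbondandoloSchwarz2009,MerryPaternain2010}) for the index identity. In fact your formula $p(t)=\partial L/\partial v\bigl(q(t),\dot q(t)/f(\eta)\bigr)$ is the correct one; the paper's displayed formula omits the $1/f(\eta)$ rescaling, which is needed for the equality case of the Fenchel inequality to match the definition of $S_{L,f}$.
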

\begin{proof}
The map $\mathcal{Z}$ is defined by \[
\mathcal{Z}(q,\eta):=\left(\left(q,\frac{\partial L}{\partial v}\left(q,\dot{q}\right)\right),\eta\right).
\]
See \cite[Lemma 5.1]{AbbondandoloSchwarz2009} or \cite[Lemma 4.1]{Merry2010a}.
The last statement follows from \cite[Section 1.3]{MerryPaternain2010}.
The key ingredient is Duistermaat's \textbf{Morse index theorem} \cite{Duistermaat1976}.
\end{proof}
As mentioned above, one would like to do Morse theory with $S_{L,f}$.
There are two issues that need to be sorted before we can proceed.
The first problem is that in general the functional $S_{L,f}$ is
\textbf{not}\emph{ }of class $C^{2}$ on $\widetilde{\Lambda}M\times\mathbb{R}$.
Nevertheless, one has the following result, which is due to Abbondandolo
and Schwarz \cite[Theorem 4.1]{AbbondandoloSchwarz2009a} (see also
the discussion before Proposition $11.2$ in \cite{AbbondandoloSchwarz2009}).
\begin{prop}
Let $f\in\mathcal{F}$. Then there exists a \textbf{\emph{smooth pseudo-gradient}}\emph{
}for $S_{L,f}$ on $\widetilde{\Lambda}M\times\mathbb{R}$. In other
words, there exists a smooth vector field $V$ on $\widetilde{\Lambda}M\times\mathbb{R}$
such that:
\begin{enumerate}
\item $V$ is bounded;
\item $d_{(q,\eta)}S_{L,f}(V(q,\eta))\geq\frac{1}{2}\min\left\{ 1,\left\Vert d_{(q,\eta)}S_{L,f}\right\Vert _{g}\right\} $
for all $(q,\eta)\in\widetilde{\Lambda}M\times\mathbb{R}$;
\item the set $\mbox{\emph{Crit}}(V)$ of zeros of $V$ coincides with $\mbox{\emph{Crit}}(S_{L,f})$,
and the linearization of $V$ at a rest point $(q,\eta)$ of $V$
agrees with the Hessian of $S_{L,f}$ at $(q,\eta)$.
\end{enumerate}
\end{prop}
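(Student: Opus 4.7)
The plan is to construct $V$ by patching together the true gradient $\nabla S_{L,f}$ (suitably rescaled) on small neighborhoods of each critical point, where $S_{L,f}$ is actually $C^{2}$, with smooth local approximations of $\nabla S_{L,f}$ on the complement, glued by a smooth partition of unity on the Hilbert manifold $\widetilde{\Lambda}M\times\mathbb{R}$. The main obstacle is exactly the reason such a construction is needed at all: $S_{L,f}$ is only of class $C^{1,1}$ in general, because the second Fr\'echet derivative of $\int_{0}^{1}L(q,\dot q/f(\eta))\,dt$ in a direction $\xi\in W^{1,2}$ requires controlling $\nabla^{2}_{vv}L\cdot\dot\xi\otimes\dot\xi$ continuously in $\xi$, which fails since $\dot\xi$ is merely $L^{2}$. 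Hence the honest gradient is not smooth enough to serve as $V$ directly.

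First, using \eqref{eq:aseq1}--\eqref{eq:aseq2} together with $0<f'\leq 1$ for $f\in\mathcal F$, one verifies that $S_{L,f}$ is of class $C^{1}$ on $\widetilde\Lambda M\times\mathbb R$ with continuous gradient in the $W^{1,2}$-metric. Any critical point $(q_*,\eta_*)$ satisfies the Euler--Lagrange equation \eqref{eq:EL eq}, so $q_*$ is a smooth loop by elliptic regularity; via Lemma~\ref{lem:key lemma-1} the non-degeneracy of $\Sigma$ translates to non-degeneracy of $(q_*,\eta_*)$, so the critical set $\mathcal C$ is discrete (up to the $S^{1}$-action in the Morse--Bott case). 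Choose a $W^{1,2}$-neighborhood $U_*$ of $(q_*,\eta_*)$ small enough that $\|\dot q\|_{C^{0}}$ is uniformly bounded on $U_*$ (by intersecting with a $C^{0}$-neighborhood of $\dot q_*$); on $U_*$ the bounds \eqref{eq:aseq2} give uniform control of the second derivatives of $L(q,\dot q/f(\eta))$, so $S_{L,f}|_{U_*}$ is genuinely $C^{2}$ with continuous Hessian. Setting $V|_{U_*}:=c\bigl(\|\nabla S_{L,f}\|_g^{2}\bigr)\,\nabla S_{L,f}$ where $c\colon[0,\infty)\to(0,\infty)$ is smooth with $c(0)=1$ and $c(x)x\geq \tfrac{1}{2}$ for $x\geq 1$ (for instance $c(x):=(1+x)^{-1/2}$) produces a smooth vector field on $U_*$ whose linearization at $(q_*,\eta_*)$ agrees with the Hessian (since $c(0)=1$ and $\nabla S_{L,f}(q_*,\eta_*)=0$), and $(q_*,\eta_*)$ is its only zero in $U_*$ after shrinking.

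Away from $\mathcal C$, at each $(q_{0},\eta_{0})\notin\mathcal C$ I would fix a chart and define a local smooth vector field $W_{(q_{0},\eta_{0})}$ by freezing the value $c(\|\nabla S_{L,f}(q_{0},\eta_{0})\|_g^{2})\,\nabla S_{L,f}(q_{0},\eta_{0})$ and multiplying by a smooth bump. Continuity of $dS_{L,f}$ yields a neighborhood $U_{(q_{0},\eta_{0})}$ on which
\[
d_{(q,\eta)}S_{L,f}\bigl(W_{(q_{0},\eta_{0})}(q,\eta)\bigr)\ \geq\ \tfrac{3}{4}\,c\bigl(\|\nabla S_{L,f}(q,\eta)\|_g^{2}\bigr)\|\nabla S_{L,f}(q,\eta)\|_g^{2},
\]
with $\|W_{(q_{0},\eta_{0})}\|_g$ uniformly bounded. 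Paracompactness of the Hilbert manifold $\widetilde\Lambda M\times\mathbb R$ together with the existence of smooth partitions of unity on smooth Hilbert manifolds then provides a locally finite refinement $\{U_{i}\}$ of the cover $\{U_*\}\cup\{U_{(q_{0},\eta_{0})}\}$ with subordinate smooth partition of unity $\{\varphi_{i}\}$; set $V:=\sum_{i}\varphi_{i}W_{i}$. Boundedness follows from the uniform bound on $c(x)\sqrt{x}$ and local uniform boundedness of the $W_{i}$; property~(2) follows by convexity of the pairing across the partition of unity, with the inequality $\tfrac{3}{4}c(x)x\geq\tfrac{1}{2}\min\{1,\sqrt{x}\}$ for the chosen $c$; and property~(3) holds by the local construction on each $U_*$. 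The whole argument pivots on the single observation that $S_{L,f}$ regains $C^{2}$ regularity on $C^{1}$-bounded neighborhoods of its automatically smooth critical points, which is the only delicate point in meeting requirement~(3).
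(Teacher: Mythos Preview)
The paper does not supply a proof of this proposition; it simply attributes the result to Abbondandolo and Schwarz \cite[Theorem 4.1]{AbbondandoloSchwarz2009a}. Your outline follows the right architecture (local models near critical points, frozen approximations elsewhere, partition of unity), but the step that is supposed to secure property~(3) contains a genuine gap.

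You write: ``Choose a $W^{1,2}$-neighborhood $U_*$ of $(q_*,\eta_*)$ small enough that $\|\dot q\|_{C^{0}}$ is uniformly bounded on $U_*$ (by intersecting with a $C^{0}$-neighborhood of $\dot q_*$).'' This cannot be done: a $C^{0}$-neighborhood of $\dot q_*$ is \emph{not} open in $W^{1,2}$, since $q\mapsto\dot q$ maps $W^{1,2}$ only into $L^{2}$. Any genuine $W^{1,2}$-neighborhood of $q_*$ contains loops whose derivatives are arbitrarily large in $L^{\infty}$. More to the point, even a uniform $C^{0}$ bound on $\dot q$ would not make $S_{L,f}$ of class $C^{2}$: the second derivative applied to $((\xi_{1},b_{1}),(\xi_{2},b_{2}))$ contains the term $\int_{0}^{1}L_{vv}(q,\dot q/f(\eta))\,\dot\xi_{1}\cdot\dot\xi_{2}\,dt$, and continuity of this bilinear form in the base point $(q,\eta)\in W^{1,2}\times\mathbb{R}$ would require $L_{vv}(q,\dot q/f(\eta))$ to depend continuously in $L^{\infty}$ on $\dot q\in L^{2}$. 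Unless $L_{vv}$ is independent of $v$ this fails, and for the Lagrangian dual to the Hamiltonian \eqref{eq:nice H} it does fail. Consequently your formula $V|_{U_*}=c(\|\nabla S_{L,f}\|_{g}^{2})\nabla S_{L,f}$ is not a smooth vector field on any $W^{1,2}$-open set.

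What Abbondandolo and Schwarz actually do is different: they never claim local $C^{2}$ regularity. They observe that although the second Fr\'echet derivative is not continuous, the Hessian $\mathrm{Hess}\,S_{L,f}(q_*,\eta_*)$ exists as a bounded symmetric operator at each critical point, and they build the local model at $(q_*,\eta_*)$ directly from this fixed operator (essentially the linear vector field it defines in a chart), then correct it so that the pseudo-gradient inequality holds. The smoothness of $V$ near $\mathrm{Crit}(S_{L,f})$ thus comes from the smoothness of a hand-built linear (or polynomial) local model, not from any regularity of $\nabla S_{L,f}$ itself. If you want to repair your argument, that is the replacement you need for the local construction near critical points.
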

Secondly, we need to verify that we can choose a pseudo-gradient $V$
such that the pair $(S_{L,f},V)$ satisfies the \textbf{Palais-Smale
condition}. Recall that we say that the pair $(S_{L,f},V)$ satisfies
the Palais-Smale condition at the level \emph{$T\in\mathbb{R}$ }if
every sequence $(q_{i},\eta_{i})\subseteq\widetilde{\Lambda}M\times\mathbb{R}$
such that $S_{L,f}(q_{i},\eta_{i})\rightarrow T$ and $d_{(q_{i},\eta_{i})}S_{L,f}(V(q_{i},\eta_{i}))\rightarrow0$
admits a convergent subsequence. The fact that $(S_{L,f},V)$ satisfies
the Palais-Smale condition at any $T>0$ is essentially a consequence
of the fact that the \textbf{Ma\~n\'e critical value}\emph{ }$c(L)$
of $L$ is negative. Let us first recall the definition of the Ma\~n\'e
critical value. 
\begin{defn}
Let $K:TN\rightarrow\mathbb{R}$ denote a fibrewise strictly convex
and superlinear Lagrangian. Define the \textbf{action}\emph{ }$\mathbb{A}_{K}$
of $K$ to be the functional \[
\mathbb{A}_{K}:\left\{ \gamma:[0,T]\rightarrow N,\ \gamma\ \mbox{absolutely continuous,}\, T>0\right\} \rightarrow\mathbb{R};
\]
\[
\mathbb{A}_{K}(\gamma):=\int_{0}^{T}K(\gamma(t),\dot{\gamma}(t))dt.
\]
The\emph{ }\textbf{Ma\~n\'e critical value}\emph{ }$c(K)$ of $K$
is the real number defined by \[
c(K):=\inf\left\{ k\in\mathbb{R}\,:\,\mathbb{A}_{K+k}(\gamma)\geq0\ \forall\,\mbox{a.c. closed curves }\gamma\mbox{ defined on }[0,T],\ \forall\, T>0\right\} .
\]

\end{defn}
The next lemma follows straight from the definition.
\begin{lem}
\label{lem:bounded below}Suppose $c(K)\leq0$. Then for any $f\in\mathcal{F}$
it holds that \[
\inf_{(q,\eta)\in\widetilde{\Lambda}M\times\mathbb{R}}S_{K,f}(q,\eta)>-\infty.
\]

\end{lem}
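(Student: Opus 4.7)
The plan is to reinterpret $S_{K,f}(q,\eta)$ as the Mañé action of a closed absolutely continuous curve parametrised on a time interval of length $T=f(\eta)$, and then invoke the defining property of the Mañé critical value directly.

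Concretely, given $(q,\eta)\in\widetilde{\Lambda}M\times\mathbb{R}$, set $T:=f(\eta)$, which is strictly positive since $f\in\mathcal{F}$ takes values in $\mathbb{R}^+$. Define $\gamma:[0,T]\to M$ by $\gamma(t):=q(t/T)$. Since $q$ is a loop, $\gamma(0)=\gamma(T)$, so $\gamma$ is an absolutely continuous closed curve defined on $[0,T]$. A change of variable $s=t/T$ gives
\[
\mathbb{A}_K(\gamma)\;=\;\int_0^T K(\gamma(t),\dot\gamma(t))\,dt\;=\;T\int_0^1 K\!\left(q(s),\frac{\dot q(s)}{T}\right)ds\;=\;S_{K,f}(q,\eta).
\]

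Next I would apply the definition of $c(K)$. For every $\varepsilon>0$ we have $c(K)+\varepsilon\in\{k:\mathbb{A}_{K+k}(\gamma)\ge 0\text{ for all closed a.c. }\gamma\}$, hence
\[
0\;\le\;\mathbb{A}_{K+c(K)+\varepsilon}(\gamma)\;=\;\mathbb{A}_K(\gamma)+(c(K)+\varepsilon)\,T.
\]
Letting $\varepsilon\downarrow 0$ yields $\mathbb{A}_K(\gamma)\ge -c(K)\,T$. Since by hypothesis $c(K)\le 0$ and $T>0$, the right-hand side is non-negative, so $S_{K,f}(q,\eta)=\mathbb{A}_K(\gamma)\ge 0$. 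As $(q,\eta)$ was arbitrary, this shows $\inf_{\widetilde{\Lambda}M\times\mathbb{R}} S_{K,f}\ge 0>-\infty$, which is the desired conclusion.

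There is no real obstacle here; the only mild subtlety is making sure that the reparametrised curve $\gamma$ on $[0,T]$ is genuinely closed (immediate from $q\in\Lambda M$) and that the infimum in the definition of $c(K)$ translates, after the limit $\varepsilon\downarrow 0$, into the clean bound $\mathbb{A}_K\ge -c(K)T$. Everything else is a routine change of variables.
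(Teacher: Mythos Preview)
Your proof is correct and is exactly the argument the paper has in mind: the authors simply write that the lemma ``follows straight from the definition'', and what you have written is precisely that unwinding --- reparametrise by $T=f(\eta)$ to identify $S_{K,f}(q,\eta)$ with $\mathbb{A}_K(\gamma)$ for a closed curve, then apply the defining property of $c(K)$.
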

In our case, the Ma\~n\'e critical value is strictly negative.
\begin{lem}
The Ma\~n\'e critical value of $L$ is strictly negative.\end{lem}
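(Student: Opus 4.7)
The plan is to exhibit an explicit sub-level bound on $L$ that forces the Mañé critical value to be negative. The starting point is the elementary but crucial observation that the defining Hamiltonian $F$ in \eqref{eq:nice H} takes a strictly negative minimum on each cotangent fibre.

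First I would evaluate $F$ on the zero section: since $r_q$ is homogeneous of degree $2$ we have $r_q(0)=0$, and by construction $\rho(0)=0$, so
\[
F(q,0) \;=\; \tfrac{1}{2}\bigl(\rho(r_q(0))-1\bigr) \;=\; -\tfrac{1}{2} \qquad\text{for all } q\in M.
\]
Now I would apply the Fenchel inequality $L(q,v)\geq p(v)-F(q,p)$, valid for every $(q,v)\in TM$ and every $p\in T_q^{*}M$, with the particular choice $p=0$. This gives
\[
L(q,v) \;\geq\; 0 - F(q,0) \;=\; \tfrac{1}{2}\qquad\text{for all }(q,v)\in TM.
\]

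Having this pointwise bound, the conclusion is immediate from the definition of the Mañé critical value. For any absolutely continuous closed curve $\gamma:[0,T]\to M$ and any $k\geq -\tfrac{1}{2}$,
\[
\mathbb{A}_{L+k}(\gamma) \;=\; \int_{0}^{T}\bigl(L(\gamma,\dot\gamma)+k\bigr)\,dt \;\geq\; T\bigl(\tfrac{1}{2}+k\bigr) \;\geq\; 0,
\]
so $k=-\tfrac{1}{2}$ lies in the infimum defining $c(L)$, whence $c(L)\leq -\tfrac{1}{2}<0$.

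There is no real obstacle here: the only point that requires a moment's care is that even though $F$ is not strictly convex on the region where $\rho\equiv 0$ (so the Legendre transformation is not a diffeomorphism near the zero section), the Fenchel \emph{inequality} used above is valid purely from the sup-definition of $L$ and requires no regularity of $F$ whatsoever. Testing with constant curves $\gamma\equiv q$ in fact shows $c(L)=-\tfrac{1}{2}$ exactly, but only the strict negativity is needed for the sequel.
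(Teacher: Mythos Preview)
Your proof is correct and takes a route that is closely related to but more elementary than the paper's. The paper invokes the characterization
\[
c(L)=\inf_{u\in C^{\infty}(M)}\sup_{q\in M}F(q,d_{q}u)
\]
due to Contreras--Iturriaga--Paternain--Paternain, and then tests with a constant function $u$ to conclude $c(L)\leq\sup_{q}F(q,0_{q})<0$, using only that the zero section lies in $D(\Sigma)$. You bypass this characterization entirely: the Fenchel inequality with $p=0$ gives $L\geq -F(q,0)=\tfrac{1}{2}$ pointwise, and then the original definition of $c(L)$ yields $c(L)\leq -\tfrac{1}{2}$ directly. Both arguments hinge on the same fact---that $F$ is negative on the zero section---but yours is self-contained and even gives the sharp value $c(L)=-\tfrac{1}{2}$, while the paper's route has the modest advantage of illustrating the CIPP formula in action and of applying verbatim without computing $F(q,0)$ explicitly.
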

\begin{proof}
The proof is based on the following alternative characterization of
the critical value, which is due to Contreras, Iturriaga, Paternain
and Paternain \cite{ContrerasIturriagaPaternainPaternain1998}. Suppose
$K:TM\rightarrow\mathbb{R}$ is a fibrewise strictly convex superlinear
Lagrangian. Then $K$ is the Fenchel transform of a unique Hamiltonian
$P:T^{*}M\rightarrow\mathbb{R}$. Then \[
c(K)=\inf_{u\in C^{\infty}(M)}\sup_{q\in M}P(q,d_{q}u).
\]
In our case since $D(\Sigma)=D(F^{-1}(0))$ contains the zero section,
taking $u$ to be a constant function we have \[
c(L)\leq\inf_{u\in C^{\infty}(M)}\sup_{q\in M}F(q,d_{q}u)\leq\sup_{q\in M}F(q,0_{q})<0.
\]

\end{proof}
The following theorem is essentially taken from \cite[Proposition 3.8 and 3.12]{ContrerasIturriagaPaternainPaternain2000,Contreras2006};
see also \cite{Benci1986}.
\begin{thm}
\label{thm:Let--forPS}Let $V$ denote a smooth pseudo-gradient for
$S_{L,f}$. Then the pair $(S_{L,f},V)$ satisfies the Palais-Smale
condition at the level $T$ on $\widetilde{\Lambda}M\times\mathbb{R}$
for any $T>0$. \end{thm}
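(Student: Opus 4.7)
The plan is to adapt the standard Palais--Smale argument for free-time Lagrangian action functionals (cf.\ Contreras--Iturriaga--Paternain--Paternain, Contreras, Benci) to our setting, using the negativity of $c(L)$ in a crucial way. Throughout, by property (2) of the pseudo-gradient, the assumption $d_{(q_i,\eta_i)}S_{L,f}(V(q_i,\eta_i))\to 0$ together with $S_{L,f}(q_i,\eta_i)\to T$ implies $\|d_{(q_i,\eta_i)}S_{L,f}\|_g\to 0$, so we may argue directly with the differential.

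First I would confine $\eta_i$ to a compact subinterval of $\mathbb{R}$. For the \emph{upper} bound, pick $k\in(c(L),0)$, so $\mathbb{A}_{L+k}(\gamma)\geq 0$ for every absolutely continuous closed curve; reparametrizing $\gamma(\tau):=q(\tau/f(\eta))$ on $[0,f(\eta)]$ gives $S_{L,f}(q,\eta)=\mathbb{A}_L(\gamma)\geq |k|\,f(\eta)$, so $f(\eta_i)\leq (T+1)/|k|$ and hence $\eta_i$ is bounded above since $f$ is strictly increasing. For the \emph{lower} bound, compute
\[
\partial_\eta S_{L,f}(q,\eta) \;=\; -\frac{f'(\eta)}{f(\eta)}\int_0^{f(\eta)} E_L(\gamma,\dot\gamma)\,d\tau \;=\; f'(\eta)\!\int_0^1\!\!\Bigl(L(q,\dot q/f(\eta))-\tfrac{\partial L}{\partial v}(q,\dot q/f(\eta))\cdot \dot q/f(\eta)\Bigr)dt .
\]
The Palais--Smale assumption forces this expression to tend to zero, which combined with $S_{L,f}(q_i,\eta_i)\to T>0$ and $\lim_{\eta\to-\infty}f(\eta)=0$ (so that $f(\eta_i)\to 0$ would force the action itself to tend to $0$ under the balancing $\int L\approx \int \partial_v L\cdot v/f(\eta)$ coming from the $\eta$-equation, using the convexity estimate \eqref{eq:aseq1}) yields a contradiction. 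Hence $\eta_i$ stays in a compact interval $[\eta_-,\eta_+]\subset\mathbb{R}$ and $f(\eta_i)\in[\delta,\delta^{-1}]$ for some $\delta>0$.

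Second I would bound the loop component in $W^{1,2}$. The fibrewise convexity bound $\nabla_{vv}L\geq c_0\mathbb{1}$ in \eqref{eq:aseq1} yields a lower bound $L(q,v)\geq \tfrac{c_0}{2}|v|^2-C'$ for some $C'>0$, so
\[
S_{L,f}(q_i,\eta_i) \;\geq\; \frac{c_0}{2\,f(\eta_i)}\int_0^1|\dot q_i|^2\,dt \;-\; C'f(\eta_i).
\]
Since $f(\eta_i)$ is bounded above and below, this gives a uniform bound on $\|\dot q_i\|_{L^2}$; combined with the assumption that the $q_i$ lie in the fixed free homotopy class $[q_i]$ and the Poincar\'e inequality along loops, one gets a uniform $W^{1,2}$ bound on $q_i$.

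Third I would pass to the limit. By weak compactness in $W^{1,2}$ and Arzel\`a--Ascoli, after extracting a subsequence we obtain $\eta_i\to\eta_\infty$, $q_i\rightharpoonup q_\infty$ weakly in $W^{1,2}$, and $q_i\to q_\infty$ uniformly. To upgrade to strong $W^{1,2}$-convergence one pairs $d_{(q_i,\eta_i)}S_{L,f}-d_{(q_\infty,\eta_\infty)}S_{L,f}$ with $(q_i-q_\infty,\eta_i-\eta_\infty)$; the first factor tends to zero (combining the PS assumption with continuity of $dS_{L,f}$ on the subsequence), while the second-variation lower bound $\nabla_{vv}L\geq c_0\mathbb{1}$ together with the bounds \eqref{eq:aseq2} lets one extract a term $\geq c\|\dot q_i-\dot q_\infty\|_{L^2}^2$ modulo lower-order pieces that vanish by uniform convergence. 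This is the classical strong-convergence device of Abbondandolo--Schwarz and Benci.

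The main obstacle is Step~1, specifically the lower bound on $\eta_i$: one must combine the delicate asymptotic behaviour of $f$ near $-\infty$ with the Euler--Lagrange-type identity hidden in $\partial_\eta S_{L,f}$, using $c(L)<0$ to exclude the ``collapsing interval'' scenario. Once $\eta_i$ is trapped in a compact set (so $f(\eta_i)$ lives in a fixed compact subinterval of $(0,\infty)$), the functional $S_{L,f}$ becomes essentially a fixed-time Lagrangian action up to bounded reparametrization, and Steps~2--3 are standard.
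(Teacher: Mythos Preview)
Your upper bound on $\eta_i$ and your Steps~2--3 are correct and follow the same line as the paper (and the references you cite). The genuine gap is in your lower bound on $\eta_i$.

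Your argument there hinges on the $\eta$-component of the differential:
\[
\partial_\eta S_{L,f}(q_i,\eta_i)\;=\;-\,f'(\eta_i)\int_0^1 E_L\bigl(q_i,\dot q_i/f(\eta_i)\bigr)\,dt\;\longrightarrow\;0.
\]
You then want to deduce, via convexity, that the action collapses to $0$. The obstruction is that for $f\in\mathcal{F}$ the factor $f'(\eta_i)$ is \emph{not} bounded away from zero along a sequence $\eta_i\to -\infty$ (indeed $\int_{-\infty}^{0}f'=f(0)<\infty$ forces $\liminf f'=0$, and for the model $f(\eta)=e^{\eta}$ one has $f'(\eta)\to 0$). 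So from $f'(\eta_i)\!\int E_L\to 0$ you cannot conclude that $\int E_L$ stays bounded, nor that $\int L\approx\int L_v\!\cdot v$ in any useful sense; for a function like $f(\eta)=1/|\eta|$ near $-\infty$ (which lies in $\mathcal{F}$ after smoothing) one has $f'/f\to 0$ and your contradiction evaporates. Also note that $c(L)<0$ plays no role in the lower bound---it is used only for the upper bound---so invoking it here does not help.

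The paper avoids this problem by \emph{not} using the $\eta$-derivative for the lower bound. Instead it first shows, from the action bound alone, that the kinetic energies $e_i$ are bounded and hence the lengths $l_i\to 0$; this localizes the loops in a single chart. It then plugs the explicit test vector $\xi_i(t)=q_i(t)-q_i(0)$ into the $q$-component $d_{(q_i,\eta_i)}S_{L,f}(\xi_i,0)$ and, using the pointwise bounds on $L_q$, $L_{vq}$, $L_{vv}$ coming from \eqref{eq:aseq1}--\eqref{eq:aseq2}, extracts the inequality $e_i/\sqrt{f(\eta_i)}$ bounded, hence $e_i\to 0$. Then $S_{L,f}(q_i,\eta_i)\le 2d_2 e_i + d_3 f(\eta_i)\to 0$, contradicting $T>0$. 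This argument is insensitive to the behaviour of $f'$ and is the missing ingredient in your sketch.
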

\begin{rem}
In fact, if $\alpha\ne0$ then the pair $(S_{L,f},V)$ satisfies the
Palais-Smale condition on $\widetilde{\Lambda}_{\alpha}M\times\mathbb{R}$
even at the level $T=0$.\end{rem}
\begin{proof}
\emph{(of Theorem \ref{thm:Let--forPS})}

Suppose we are given a sequence $(q_{i},\eta_{i})\subseteq\widetilde{\Lambda}M\times\mathbb{R}$
such that $S_{L,f}(q_{i},\eta_{i})\rightarrow T$ for some $T>0$
and $d_{(q_{i},\eta_{i})}S_{L,f}(V(q_{i},\eta_{i}))\rightarrow0$.
Passing to a subsequence we may assume that \begin{equation}
0\leq S_{L,f}(q_{i},\eta_{i})\leq C,\ \ \ \left\Vert d_{(q_{i},\eta_{i})}S_{L,f}\right\Vert _{g}\leq\frac{1}{i}\ \ \ \mbox{for all }i\in\mathbb{N},\label{eq:ass}
\end{equation}
where $C$ is some positive constant. We first check that $(\eta_{i})$
is uniformly bounded below. Equations \eqref{eq:aseq1} and \eqref{eq:aseq2}
imply that there exist constants $d_{0},d_{1},d_{2},d_{3}>0$ such
that\[
d_{0}\left|v\right|^{2}-d_{1}\leq L(q,v)\leq d_{2}\left|v\right|^{2}+d_{3}\ \ \ \mbox{for all }(q,v)\in TM.
\]
Compactness of $M$ implies, up to passing to a subsequence, that
$\lim_{i}q_{i}(0)=q_{0}$ for some $q_{0}\in M$. Write $\gamma_{i}(t):=q_{i}(t/f(\eta_{i}))$,
so that $\gamma_{i}:\mathbb{R}/f(\eta_{i})\mathbb{Z}\rightarrow M$.
We will write $l_{i}$ and $e_{i}$ for the \textbf{length }and \textbf{energy
}of the curves $\gamma_{i}$, given by \[
l_{i}:=\int_{0}^{f(\eta_{i})}\left|\dot{\gamma}_{i}(t)\right|dt,\ \ \ e_{i}:=\int_{0}^{f(\eta_{i})}\frac{1}{2}\left|\dot{\gamma}_{i}(t)\right|^{2}dt.
\]
The Cauchy-Schwarz inequality implies that \begin{equation}
l_{i}^{2}\leq2f(\eta_{i})e_{i}.\label{eq:CS}
\end{equation}
Note that\begin{equation}
2d_{2}e_{i}+d_{3}f(\eta_{i})\geq S_{L,f}(q_{i},\eta_{i})=\int_{0}^{f(\eta_{i})}L(\gamma_{i},\dot{\gamma}_{i})dt\geq2d_{0}e_{i}-d_{1}f(\eta_{i}).\label{eq:sandwich}
\end{equation}
Assume for contradiction that $(\eta_{i})$ is not uniformly bounded
below. Up to passing to a subsequence, we may assume that $\eta_{i}\rightarrow-\infty$.
We will now prove that after passing to a further subsequence if necessary,
$e_{i}\rightarrow0$. Then \eqref{eq:sandwich} implies that $S_{L,f}(q_{i},\eta_{i})\rightarrow0$,
which contradicts the fact that $T>0$.

To see this we argue as follows. Firstly, \eqref{eq:ass} implies
that $(e_{i})$ is bounded. Since $(e_{i})$ is bounded, \eqref{eq:CS}
implies that $l_{i}\rightarrow0$, and thus up to passing to a subsequence,
we may assume that $q_{i}(S^{1})\subseteq U\cong\mathbb{R}^{n}$ (where
$n=\dim\, M$) for all $i$. Thus for the remainder of the proof we
work on $\mathbb{R}^{n}$. We can therefore speak of the partial derivatives
$L_{q}=\frac{\partial L}{\partial q}$ and $L_{v}=\frac{\partial L}{\partial v}$.
The assumptions \eqref{eq:aseq1} and\eqref{eq:aseq2} imply that
there exist constants $c_{2},c_{3},c_{4}>0$ such that in the coordinates
on $U$, \begin{equation}
c_{2}:=\sup_{q\in U,v\in T_{q}M}\frac{\left|L_{q}(q,v)\right|}{1+\left|v\right|^{2}}<\infty;\label{eq:b3}
\end{equation}
\[
c_{3}:=\sup_{q\in U,v\in T_{q}M}\frac{\left|L_{vq}(q,v)\right|}{1+\left|v\right|^{2}}<\infty;
\]
\[
c_{4}:=\inf_{q\in U,v\in T_{q}M}\frac{v\cdot L_{vv}(q,v)\cdot v}{\left|v\right|^{2}}>0.
\]
Arguing as in \cite[Lemma 3.2(ii)]{Contreras2006}, we have for any
two points $q,q'\in U$ and any $v\in T_{q}M$ that \begin{equation}
L_{v}(q,v)\cdot v\geq L_{v}(q',0)\cdot v-c_{3}\left|v\right|\left|q-q'\right|-c_{3}\left|v\right|^{2}\left|q-q'\right|+c_{4}\left|v\right|^{2}.\label{eq:gonzalo}
\end{equation}
Let $\xi_{i}(t):=q_{i}(t)-q_{i}(0)$, so that $(\xi_{i},0)\in T_{(q_{i},\eta_{i})}(\widetilde{\Lambda}\mathbb{R}^{n}\times\mathbb{R})$.
Put $\zeta_{i}(t):=\xi_{i}(t/f(\eta_{i}))$, so that $\dot{\zeta}_{i}(t)=\dot{\gamma}_{i}(t)$.
Then \eqref{eq:ass} implies that \begin{equation}
\left|d_{(q_{i},\eta_{i})}S_{L,f}(\xi_{i},0)\right|\leq\frac{1}{i}\left\Vert (\xi_{i},0)\right\Vert _{g}\leq\frac{1}{i}\sqrt{2f(\eta_{i})e_{i}}.\label{eq:bound above}
\end{equation}
Next, a straightforward computation (see \cite[p331]{Contreras2006})
tells us that\[
d_{(q_{i},\eta_{i})}S_{L,f}(\xi_{i},0)=\int_{0}^{f(\eta_{i})}\left(L_{q}(\gamma_{i},\dot{\gamma}_{i})\zeta_{i}+L_{v}(\gamma_{i},\dot{\gamma}_{i})\dot{\zeta}_{i}\right)dt.
\]
We apply \eqref{eq:b3} and \eqref{eq:gonzalo} with $(q,v)=(\gamma_{i},\dot{\gamma}_{i})$
and $q'=\gamma_{i}(0)$ to obtain: \begin{align*}
d_{(q_{i},\eta_{i})}S_{L,f}(\xi_{i},0) & \geq-c_{2}\int_{0}^{f(\eta_{i})}\left(1+\left|\dot{\gamma}_{i}\right|^{2}\right)\left|\gamma_{i}-\gamma_{i}(0)\right|dt+\left(\int_{0}^{f(\eta_{i})}L_{q}(\gamma_{i}(0),0)\cdot\dot{\gamma}_{i}dt\right)\\
 & -c_{3}\int_{0}^{f(\eta_{i})}\left|\dot{\gamma}_{i}\right|\left|\gamma_{i}-\gamma_{i}(0)\right|dt-c_{3}\int_{0}^{f(\eta_{i})}\left|\dot{\gamma}_{i}\right|^{2}\left|\gamma_{i}-\gamma_{i}(0)\right|dt+2c_{4}e_{i}\\
 & \geq-c_{2}l_{i}f(\eta_{i})+0-c_{3}l_{i}^{2}-2(c_{2}+c_{3})l_{i}e_{i}+2c_{4}e_{i}.
\end{align*}
Combining this last equation with \eqref{eq:bound above} and dividing
through by $\sqrt{f(\eta_{i})}$, we see that \[
-c_{2}l_{i}\sqrt{f(\eta_{i})}-c_{3}\frac{l_{i}^{2}}{\sqrt{f(\eta_{i})}}-2(c_{2}+c_{3})\frac{l_{i}e_{i}}{\sqrt{f(\eta_{i})}}+2c_{4}\frac{e_{i}}{\sqrt{f(\eta_{i})}}\leq\frac{1}{i}\sqrt{2e_{i}}.
\]
Equation \eqref{eq:CS} implies the first three terms on the left-hand
side are bounded. Since the right-hand side is also bounded, we see
that \[
\frac{e_{i}}{\sqrt{f(\eta_{i})}}
\]
 is bounded, and thus $e_{i}\rightarrow0$ as claimed.

We have now proved that $(\eta_{i})$ is bounded below. Next, we check
that $(\eta_{i})$ is bounded above. Indeed, we have\[
S_{L,f}(q_{i},\eta_{i})=S_{L+c(L),f}(q_{i},\eta_{i})-c(L)f(\eta_{i}).
\]
Since $f(\eta)\equiv\eta$ on $[a,\infty)$, and since $S_{L+c(L),f}$
is bounded below (Lemma \ref{lem:bounded below}) and $c(L)<0$, we
must have $(\eta_{i})$ bounded above.

Thus $(\eta_{i})$ is a bounded sequence, and thus up to passing to
a subsequence, we may assume that $\eta_{i}\rightarrow\eta$ for some
$\eta\in\mathbb{R}$. From this point on the proof is essentially
identical to \cite[Proposition 3.12]{Contreras2006}, and thus we
will omit further details.
\end{proof}
Note that Lemma \ref{lem:key lemma-1} implies that $\mbox{Crit}(S_{L,f},\alpha)\cap\mathbb{S}_{\alpha}^{3\ell/4}=\emptyset$.
Using this observation together with Theorem \ref{thm:Let--forPS},
and arguing as in \cite[Proposition 11.3]{AbbondandoloSchwarz2009}
we conclude:
\begin{cor}
\label{cor:def retract}The pair $(\widetilde{\Lambda}_{\alpha}M\times\mathbb{R},\mathbb{S}_{\alpha}^{3\ell/4})$
is homotopy equivalent to $(\Lambda_{\alpha}M,\emptyset)$ if $\alpha\ne0$,
and to $(\Lambda_{0}M,M)$ if $\alpha=0$, where we view $M\subseteq\Lambda_{0}M$
as the constant loops.
\end{cor}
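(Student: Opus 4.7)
The strategy is to use the negative flow of the pseudo-gradient $V$ to construct a deformation retract of the pair $(\widetilde{\Lambda}_{\alpha}M\times\mathbb{R},\mathbb{S}_{\alpha}^{3\ell/4})$. The preliminary observation, used throughout, is that $\mbox{Crit}(S_{L,f})\cap\mathbb{S}_{\alpha}^{3\ell/4}=\emptyset$: by Lemma \ref{lem:key lemma-1} the bijection $\mathcal{Z}$ identifies critical points of $S_{L,f}$ with those of $A_{F,f}$ and preserves values, and by Lemma \ref{lem:key lemma}.1 every critical value of $A_{F,f}$ equals $f(\eta)\geq\ell$. In particular every critical value of $S_{L,f}$ is at least $\ell>3\ell/4$.

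For $\alpha\ne0$, the plan is to show directly that $\mathbb{S}_{\alpha}^{3\ell/4}=\emptyset$. I would first check that $S_{L,f}$ is coercive on $\widetilde{\Lambda}_{\alpha}M\times\mathbb{R}$: as $\eta\to+\infty$ one has $f(\eta)\to\infty$ and $L(q,\dot{q}/f(\eta))\to L(q,0)>0$, while as $\eta\to-\infty$ the quadratic lower bound coming from \eqref{eq:aseq1} and the fact that loops in class $\alpha\ne0$ have length uniformly bounded below force $S_{L,f}\geq 2d_{0}e/f(\eta)-d_{1}f(\eta)\to+\infty$. Combined with the Palais--Smale condition at level $T=0$ (the remark after Theorem \ref{thm:Let--forPS}), coercivity implies $S_{L,f}$ attains its infimum on $\widetilde{\Lambda}_{\alpha}M\times\mathbb{R}$ at a critical point, whose value is at least $\ell$. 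Hence $\mathbb{S}_{\alpha}^{3\ell/4}=\emptyset$, and the pair reduces to $(\widetilde{\Lambda}_{\alpha}M\times\mathbb{R},\emptyset)\simeq(\Lambda_{\alpha}M,\emptyset)$ using the contractibility of $\mathbb{R}$ and the standard fact that the inclusion of smooth loops into the $W^{1,2}$-completion is a homotopy equivalence.

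For $\alpha=0$ the sublevel set is nonempty (constant loops with $\eta$ very negative lie in it, since $S_{L,f}(q_{0},\eta)=f(\eta)/2\to0$ as $\eta\to-\infty$), so a more subtle deformation is required and I would follow \cite[Proposition 11.3]{AbbondandoloSchwarz2009} closely. Choose $\eta_{0}$ with $f(\eta_{0})/2<3\ell/4$ and set $M_{-}:=M\times(-\infty,\eta_{0}]\subseteq\mathbb{S}_{0}^{3\ell/4}$; clearly $M_{-}\simeq M$. The claim is that the backward $V$-flow, which strictly decreases $S_{L,f}$ and has no rest point inside $\mathbb{S}_{0}^{3\ell/4}$, produces a strong deformation retract of $\mathbb{S}_{0}^{3\ell/4}$ onto $M_{-}$. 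The Palais--Smale condition at positive levels, together with coercivity of $S_{L,f}$ on loops of length bounded below (the same estimate as in the $\alpha\ne0$ case), shows that flow lines confined to $\mathbb{S}_{0}^{3\ell/4}$ cannot escape to infinity except by drifting to constant loops with $\eta\to-\infty$, i.e.\ into $M_{-}$. The global deformation $(q,\eta)\mapsto q$, combined with the smooth-versus-Sobolev equivalence, retracts $\widetilde{\Lambda}_{0}M\times\mathbb{R}$ onto $\Lambda_{0}M$ while sending $M_{-}$ to the constant loops $M\subset\Lambda_{0}M$. Splicing these two retracts gives the desired pair equivalence $(\Lambda_{0}M,M)$.

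The main obstacle is executing the $\alpha=0$ retraction rigorously: since Palais--Smale fails at level $T=0$, the usual deformation lemma does not apply off the shelf, and one must carefully track the asymptotic behavior of flow lines of $V$ inside $\mathbb{S}_{0}^{3\ell/4}$ to show that the only way their action can drop to $0$ is by approaching the constant-loop locus with $\eta\to-\infty$. The technical estimates underlying the proof of Theorem \ref{thm:Let--forPS}, in particular the dichotomy between the constant-loop regime and the high-energy regime, are precisely what is needed here, and they allow the argument of \cite[Proposition 11.3]{AbbondandoloSchwarz2009} to carry over without substantive change.
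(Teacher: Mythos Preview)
Your proposal is correct and follows essentially the same line as the paper, which proves the corollary simply by invoking the observation that $\mbox{Crit}(S_{L,f},\alpha)\cap\mathbb{S}_{\alpha}^{3\ell/4}=\emptyset$, the Palais--Smale property of Theorem~\ref{thm:Let--forPS}, and then citing \cite[Proposition~11.3]{AbbondandoloSchwarz2009}. Your write-up is a faithful unpacking of exactly these ingredients, including the explicit verification for $\alpha\ne0$ that $\mathbb{S}_{\alpha}^{3\ell/4}=\emptyset$ (which is indeed forced by the pair equivalence with $(\Lambda_{\alpha}M,\emptyset)$) and the deformation onto the constant-loop locus for $\alpha=0$.
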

We are now in a position of being able to define the Morse homology
of $S_{L,f}$. Suppose $\ell/2<a<b<\infty$ and $\alpha\in[S^{1},M]$.
The \textbf{relative Morse homology }of $(S_{L,f},\alpha)$ on the
action interval $(a,b)$ will be well defined whenever $a,b\notin\mathcal{A}(\Sigma,\alpha)$.
Fix a smooth pseudo-gradient $V$ of $S_{L,f}$. Pick a Morse function
$m:\mbox{Crit}(S_{L,f})\rightarrow\mathbb{R}$, and denote by $\mbox{Crit}(m)\subseteq\mbox{Crit}(S_{L,f})$
the set of critical points of $m$. Define an augmented grading $i:\mbox{Crit}(m)\rightarrow\mathbb{Z}$
by \[
i(w):=i_{S_{L,f}}(w)+i_{m}(w),\ \ \ w=(q,\eta),
\]
where $i_{m}(w)\in\{0,1\}$ is the Morse index of $w$, seen as a
critical point of $m$. Let $\mbox{Crit}_{k}^{(a,b)}(m,\alpha):=\{w\in\mbox{Crit}(m)\cap\mbox{Crit}^{(a,b)}(S_{L,f},\alpha)\,:\, i(w)=k\}$.
Given $k\in\mathbb{Z}$, let \[
CM_{k}^{(a,b)}(S_{L,f},\alpha):=\mbox{Crit}_{k}^{(a,b)}(m,\alpha)\otimes\mathbb{Z}_{2}.
\]
Fix a Riemannian metric $g_{0}$ on $\mbox{Crit}(\ell)$ for which
the negative gradient flow $\phi_{t}^{-\nabla m}$ of $m$ is Morse-Smale.
Then up to a perturbation of the pseudo-gradient vector field $V$
and the metric $g_{0}$, we obtain a boundary operator \[
\partial:CM_{k}^{(a,b)}(S_{L,f},\alpha)\rightarrow CM_{k-1}^{(a,b)}(S_{L,f},\alpha)
\]
satisfying $\partial^{2}=0$. We denote by $HM^{(a,b)}(S_{L,f},\alpha)$
the homology of this chain complex. As our notation suggests, the
homology is independent of the auxiliary choices we made when defining
the chain complex and its boundary operator. The \textbf{Morse homology
theorem} tells us that there exists an isomorphism \begin{equation}
\theta^{(a,b)}:HM^{(a,b)}(S_{L,f},\alpha)\rightarrow H(\mathbb{S}_{\alpha}^{b},\mathbb{S}_{\alpha}^{a}).\label{eq:morse homology iso}
\end{equation}
See \cite{AbbondandoloMajer2006,AbbondandoloSchwarz2009} for more
details.

\subsection{The Abbondandolo-Schwarz isomorphism}

$\ $\vspace{6 pt}

Fix $\ell/2<a<b<\infty$ and $\alpha\in[S^{1},M]$ such that $a,b\notin\mathcal{A}(\Sigma,\alpha)$.
Both the Morse homology $HM^{(a,b)}(S_{L,f},\alpha)$ and the Floer
homology $HF^{(a,b)}(A_{F,f},\alpha)$ are defined. We now relate
the two chain complexes via an {}``Abbondandolo-Schwarz'' chain
map%
\footnote{The reader may wonder why we defy the standard alphabetical ordering
naming convention here. This chain map is called {}``$\Phi_{\textrm{SA}}$''
because it goes from the chain complex of the {}``$S$'' functional
to the chain complex of the {}``$A$'' functional. There is another
chain map that goes the other way round; this one is denoted by {}``$\Phi_{\textrm{AS}}$''
See \cite[Section 7]{AbbondandoloSchwarz2009} or \cite[Theorem 5.1]{Merry2010a}.
The chain map $\Phi_{\textrm{AS}}$ is not used in the present paper
however.  %
} $\Phi_{\textrm{SA}}^{(a,b)}:CM^{(a,b)}(S_{L,f},\alpha)\rightarrow CF^{(a,b)}(A_{F,f},\alpha)$. 
\begin{rem}
In the discussion that follows for simplicity we will suppress the
fact that we are in a Morse-Bott situation. In reality we need to
consider flow lines with cascades in the construction below, and we
need to choose the Morse functions $m$ on $\mbox{Crit}(S_{L,f})$
and $h$ on $\mbox{Crit}(A_{F,f})$ to satisfy certain compatibility
conditions. This extra subtlety is dealt with fully in \cite{AbbondandoloSchwarz2009},
and there are no changes whatsoever in the present situation. 

This chain map $\Phi_{\textrm{SA}}^{(a,b)}$ is defined by counting
solutions of the following mixed problem: Given a critical point $w$
of $\ell$ and a critical point $v$ of $h$, we consider the moduli
space of maps $u=(x,\eta):[0,\infty)\rightarrow\widetilde{\Lambda}T^{*}M\times\mathbb{R}$
that solve the Rabinowitz Floer equation $\partial_{s}u+\nabla A_{F,f}(u)=0$
on $(0,\infty)$ and satisfy the boundary conditions (a) $\lim_{s\rightarrow\infty}u(s)=v$
and (b) $(\pi\circ x(0),\eta(0))\in W^{u}(w;-V)$. Lemma \ref{lem:key lemma-1},
together with its differential version allows one to prove the necessary
compactness for such solutions. This method was invented by Abbondandolo
and Schwarz in \cite{AbbondandoloSchwarz2006}, and extended to Rabinowitz
Floer homology by the same authors in \cite{AbbondandoloSchwarz2009}.
The upshot is the following theorem, whose proof involves no ideas
not already present in either of the two aforementioned references,
and thus will be omitted.\end{rem}
\begin{thm}
\label{thm:precise version of theorem e}There exists a chain complex
isomorphism \[
\Phi_{\textrm{\emph{SA}}}^{(a,b)}:CM^{(a,b)}(S_{L,f},\alpha)\rightarrow CF^{(a,b)}(A_{F,f},\alpha)
\]
of the form \[
\Phi_{\textrm{\emph{SA}}}^{(a,b)}w=\sum_{v\in\textrm{\emph{Crit}}^{(a,b)}(h,\alpha)}n_{\textrm{\emph{SA}}}(w,v)v\ \ \ \forall v\in\mbox{\emph{Crit}}^{(a,b)}(h,\alpha),
\]
where $n_{\textrm{\emph{SA}}}(w,v)\in\mathbb{Z}_{2}$ is zero if $i(w)\ne\mu(v)$
or if $S_{L,f}(w)\leq A_{F,f}(v)$, unless $v=\mathcal{Z}(w)$, in
which case $n_{\textrm{\emph{SA}}}(w,\mathcal{Z}(w))=1$.
\end{thm}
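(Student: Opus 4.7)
The plan is to build $\Phi_{\textrm{SA}}^{(a,b)}$ by counting mixed (half-Morse, half-Floer) trajectories, and then extract the isomorphism from a triangular structure with respect to action. For each pair $(w,v)\in\mbox{Crit}^{(a,b)}(m,\alpha)\times\mbox{Crit}^{(a,b)}(h,\alpha)$, consider the moduli space $\mathcal{M}_{\textrm{SA}}(w,v)$ of maps $u=(x,\eta):[0,\infty)\to\widetilde{\Lambda}T^{*}M\times\mathbb{R}$ that satisfy $\partial_{s}u+\nabla_{J}A_{F,f}(u)=0$ on $(0,\infty)$, converge asymptotically to $v$ as $s\to\infty$, and obey the mixed boundary condition $(\pi\circ x(0),\eta(0))\in W^{u}(w;-V)$ (if one is working in the Morse--Bott setting one uses flow lines with cascades on both ends and a choice of compatible auxiliary Morse functions, exactly as in \cite{AbbondandoloSchwarz2009}). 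Define $n_{\textrm{SA}}(w,v):=\#_{2}\mathcal{M}_{\textrm{SA}}(w,v)$ whenever the virtual dimension $i(w)-\mu(v)$ equals zero, and put
\[
\Phi_{\textrm{SA}}^{(a,b)}w:=\sum_{v\in\textrm{Crit}^{(a,b)}(h,\alpha)}n_{\textrm{SA}}(w,v)\,v.
\]

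The key step is to prove that $\mathcal{M}_{\textrm{SA}}(w,v)$ is a finite-dimensional manifold which is compact up to breaking. For generic data the Fredholm index of the linearized operator equals $i(w)-\mu(v)$; transversality is achieved by perturbing either the pseudo-gradient $V$ near $W^{u}(w;-V)$ or the family of almost complex structures. For compactness, the crucial ingredient is the action inequality of Lemma \ref{lem:key lemma-1}: combining it with the fact that $s\mapsto A_{F,f}(u(s))$ is decreasing yields the energy identity
\[
E(u)\;=\;A_{F,f}(u(0))-A_{F,f}(v)\;\leq\;S_{L,f}(\pi\circ x(0),\eta(0))-A_{F,f}(v)\;\leq\;S_{L,f}(w)-A_{F,f}(v),
\]
which bounds $E(u)$ and confines the Floer action along $u$ to the interval $(a,b)$. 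This in turn feeds into the argument of Proposition \ref{pro:useful dichotomy} (applied on the half line $[0,\infty)$) to give uniform control of $\eta$, while the convexity argument of Lemma \ref{lem:why convex good} together with the Abbondandolo--Schwarz $L^{\infty}$ estimates packaged in Theorem \ref{thm:quadratic is compact} controls $x$ in $C^{0}$; together with $C^{1}$-bounds coming from the bubble-free nature of $T^{*}M$ (exactness), standard arguments give the needed compactness. The usual analysis of broken trajectories at the two ends then shows that $\Phi_{\textrm{SA}}^{(a,b)}$ is a chain map.

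To identify the matrix coefficients $n_{\textrm{SA}}(w,v)$, first note that $n_{\textrm{SA}}(w,v)=0$ whenever $i(w)\ne\mu(v)$ by the dimension formula. If $i(w)=\mu(v)$, the energy identity above shows that $n_{\textrm{SA}}(w,v)\ne 0$ forces $A_{F,f}(v)\leq A_{F,f}(u(0))\leq S_{L,f}(w)$, so that in the case $S_{L,f}(w)\leq A_{F,f}(v)$ one must have $E(u)=0$ throughout. Hence $u$ is the constant trajectory at a critical point $(x(0),\eta(0))$ of $A_{F,f}$, the equality case of Lemma \ref{lem:key lemma-1} forces $(x(0),\eta(0))=\mathcal{Z}(w)$, so $v=\mathcal{Z}(w)$ and a transversality computation (as in \cite[\S 7]{AbbondandoloSchwarz2009}) gives $n_{\textrm{SA}}(w,\mathcal{Z}(w))=1$.

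The isomorphism statement now follows from a filtration argument: order the critical points of $S_{L,f}$ and $A_{F,f}$ in $(a,b)$ by decreasing action. Under the action-preserving bijection $\mathcal{Z}:\mbox{Crit}(S_{L,f},\alpha)\to\mbox{Crit}(A_{F,f},\alpha)$ (and using the agreement of Conley--Zehnder indices from Lemma \ref{lem:key lemma-1}) the matrix of $\Phi_{\textrm{SA}}^{(a,b)}$ is upper triangular with $1$'s on the diagonal, hence a $\mathbb{Z}_{2}$-linear isomorphism of chain complexes. The main technical obstacle will be establishing the compactness of $\mathcal{M}_{\textrm{SA}}(w,v)$ in a uniform way over the action window: the Hamiltonian $F$ is only quadratic at infinity and not compactly supported, so the required $L^{\infty}$ estimates on $x$ rest on the convexity/maximum-principle machinery of Abbondandolo--Schwarz (Theorem \ref{thm:quadratic is compact}), which must be adapted to the half-cylinder setting with the mixed boundary condition at $s=0$; once that is in place, the rest of the proof proceeds by the now-standard Floer-theoretic formalism.
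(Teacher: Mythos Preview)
Your proposal is correct and follows exactly the approach the paper sketches: define $\Phi_{\textrm{SA}}^{(a,b)}$ by counting half-Floer trajectories on $[0,\infty)$ with the mixed boundary condition $(\pi\circ x(0),\eta(0))\in W^{u}(w;-V)$, use Lemma~\ref{lem:key lemma-1} (and its differential version) for the action/energy estimate that drives compactness, and deduce the isomorphism from the resulting triangular form. The paper itself omits the details and refers to \cite{AbbondandoloSchwarz2006,AbbondandoloSchwarz2009}, so your write-up is in fact more detailed than what appears there.
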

Denote by $\phi_{\textrm{SA}}^{(a,b)}=[\Phi_{\textrm{SA}}^{(a,b)}]$
the induced map on homology. The Abbondandolo-Schwarz map is functorial
in the following sense. Fix $\ell/2<a<b<\infty$ and $\ell/2<c<d<\infty$,
such that $a\leq c$, $b\leq d$, and $a,b,c,d\notin\mathcal{A}(\Sigma,\alpha)$.
Then the following diagram commutes, where the horizontal maps are
all induced by inclusion, and $\theta^{(a,b)}$ denotes the isomorphism
\eqref{eq:morse homology iso}, \[
\xymatrix{HF{}^{(a,b)}(A_{F,f},\alpha)\ar[r] & HF^{(c,d)}(A_{F,f},\alpha)\\
HM^{(a,b)}(S_{L,f},\alpha)\ar[r]\ar[d]_{\theta^{(a,b)}}\ar[u]^{\phi_{\textrm{SA}}^{(a,b)}} & HM^{(c,d)}(S_{L,f},\alpha)\ar[d]^{\theta^{(c,d)}}\ar[u]_{\phi_{\textrm{SA}}^{(c,d)}}\\
H(\mathbb{S}_{\alpha}^{b},\mathbb{S}_{\alpha}^{a})\ar[r] & H(\mathbb{S}_{\alpha}^{d},\mathbb{S}_{\alpha}^{c})
}
\]

In order to fit in with our earlier notation \eqref{eq:the map Z-1},
let us denote by \begin{equation}
\widetilde{Z}(a,b):H(\mathbb{S}_{\alpha}^{a},\mathbb{S}_{\alpha}^{3\ell/4})\rightarrow H(\Lambda_{\alpha}M\times\mathbb{R},\mathbb{S}_{\alpha}^{b}),\label{eq:the map Z}
\end{equation}
the map on singular homology induced from inclusion. As before $\widetilde{Z}(a,b)=0$
if $a<b$.

Anyway, passing to the direct limit, we conclude the following result,
which is the main one of this section.
\begin{thm}
\label{thm:computation in the convex case}In the situation above
one has:
\begin{enumerate}
\item $HF^{(3\ell/4,\infty)}(A_{F,f},\alpha)\cong\begin{cases}
H(\Lambda_{\alpha}M) & \alpha\ne0,\\
H(\Lambda_{0}M,M) & \alpha=0.
\end{cases}$
\item Suppose $a,b>3\ell/4$ with $a,b\notin\mathcal{A}(\Sigma,\alpha)$.
Then it holds that\begin{multline*}
\mbox{\emph{rank}}\left\{ Z(a,b):HF^{(3\ell/4,a)}(A_{F,f},\alpha)\rightarrow HF_{\mathcal{}}^{(b,\infty)}(A_{F,f},\alpha)\right\} =\\
\mbox{\emph{rank}}\left\{ \widetilde{Z}(a,b):H(\mathbb{S}_{\alpha}^{a},\mathbb{S}_{\alpha}^{3\ell/4})\rightarrow H(\Lambda_{\alpha}M\times\mathbb{R},\mathbb{S}_{\alpha}^{b})\right\} .
\end{multline*}

\end{enumerate}
\end{thm}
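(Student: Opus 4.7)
The plan is to combine, level by level in the action window $(3\ell/4,b)$, the Abbondandolo-Schwarz isomorphism $\phi_{\mathrm{SA}}^{(3\ell/4,b)}$ of Theorem \ref{thm:precise version of theorem e} with the Morse-homology isomorphism $\theta^{(3\ell/4,b)}$ of \eqref{eq:morse homology iso}, and then to pass to the direct limit as $b\to\infty$ and apply Corollary \ref{cor:def retract}.

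For part (1), for every admissible $b>3\ell/4$ these isomorphisms give
\[
HF^{(3\ell/4,b)}(A_{F,f},\alpha)\;\cong\;HM^{(3\ell/4,b)}(S_{L,f},\alpha)\;\cong\;H(\mathbb{S}_\alpha^b,\mathbb{S}_\alpha^{3\ell/4}).
\]
The commutative diagram appearing between Theorem \ref{thm:precise version of theorem e} and \eqref{eq:the map Z} ensures that these isomorphisms intertwine the inclusion-induced maps as $b$ varies. Taking direct limits on all three sides yields
\[
HF^{(3\ell/4,\infty)}(A_{F,f},\alpha)\;\cong\;\varinjlim_{b}H(\mathbb{S}_\alpha^b,\mathbb{S}_\alpha^{3\ell/4})\;\cong\;H\bigl(\widetilde{\Lambda}_\alpha M\times\mathbb{R},\mathbb{S}_\alpha^{3\ell/4}\bigr),
\]
the last step because $\widetilde{\Lambda}_\alpha M\times\mathbb{R}$ is the directed union of the open sublevel sets $\mathbb{S}_\alpha^b$ and singular homology commutes with such colimits. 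Corollary \ref{cor:def retract} then identifies the right-hand side with $H(\Lambda_\alpha M)$ when $\alpha\neq 0$ and with $H(\Lambda_0 M,M)$ when $\alpha=0$.

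For part (2), the strategy is to identify $Z(a,b)$ with $\widetilde{Z}(a,b)$ via the isomorphisms of part (1) and its relative analogue for $HF^{(b,\infty)}$. Both maps are, by definition, an inclusion-induced map followed by a map which lowers the bottom endpoint (with the top endpoint sent to infinity). The functoriality diagram cited above, applied simultaneously to the two action windows $(3\ell/4,a)$ and $(b,c)$ for $c$ large and then passed to $\varinjlim_c$, assembles into a commutative square
\[
\xymatrix@C=3em{
HF^{(3\ell/4,a)}(A_{F,f},\alpha)\ar[r]^-{Z(a,b)}\ar[d]_{\cong} & HF^{(b,\infty)}(A_{F,f},\alpha)\ar[d]^{\cong}\\
H(\mathbb{S}_\alpha^a,\mathbb{S}_\alpha^{3\ell/4})\ar[r]_-{\widetilde{Z}(a,b)} & H\bigl(\widetilde{\Lambda}_\alpha M\times\mathbb{R},\mathbb{S}_\alpha^b\bigr)
}
\]
whose vertical arrows are isomorphisms; the equality of ranks then follows at once.

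The main obstacle I expect is not mathematical depth but careful naturality bookkeeping: one must verify that the Abbondandolo-Schwarz chain equivalence is natural with respect to both raising the upper endpoint of the action window (used in $\iota^a_{3\ell/4}$ and in the colimit in $c$) and raising the lower endpoint (used in $\pi^b_{3\ell/4}$), and that this naturality survives the Morse-Bott/cascades setup. Both forms come from the same cobordism-of-moduli-spaces argument that defines $\Phi_{\mathrm{SA}}$ in the single-window case, and both are implicitly established in \cite{AbbondandoloSchwarz2009}, so no new analytical ingredient is required.
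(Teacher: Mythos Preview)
Your proposal is correct and follows exactly the approach the paper intends: the paper's own ``proof'' consists of the single sentence ``passing to the direct limit, we conclude the following result,'' after having recorded the functoriality square linking $\phi_{\mathrm{SA}}^{(a,b)}$, $\theta^{(a,b)}$, and the inclusion-induced maps. Your write-up simply spells out what that sentence means, including the appeal to Corollary \ref{cor:def retract} for the final identification and the observation that the functoriality diagram (valid for $a\le c$, $b\le d$) handles both the raising of the upper endpoint and the raising of the lower endpoint needed to build $Z(a,b)$ and $\widetilde{Z}(a,b)$.
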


\section{\label{sec:Proof-of-Theorem A}Proof of Theorem A}

In this section we complete the proof of Theorem A from the Introduction.
To begin with however we introduce the following definition.
\begin{defn}
Given a starshaped hypersurface $\Sigma\subseteq T^{*}M$, $T>0$
and $\alpha\in[S^{1},M]$, define\[
\delta(\Sigma,T,\alpha):=\inf\left\{ \left|T'-T''\right|\,:\, T'\ne T'',\ T',T''\in\mathcal{A}(\Sigma,\alpha)\cap[0,T]\right\} .
\]
If $\Sigma$ is non-degenerate then $\delta(\Sigma,T,\alpha)>0$ for
every (finite) $T>0$ and $\alpha\in[S^{1},M]$.
\end{defn}
We now proceed with the proof of Theorem A. Let $\Sigma$ denote a
non-degenerate fibrewise starshaped hypersurface. Let $g$ denote
a \textbf{bumpy }Riemannian metric on $M$ such that the unit disc
bundle $D(S_{g}^{*}M)$ is contained in $D^{\circ}(\Sigma)$, and
let us denote by $F_{g}:T^{*}M\rightarrow\mathbb{R}$ the Hamiltonian
\begin{equation}
F_{g}(q,p):=\frac{1}{2}\left(\left|p\right|_{g}^{2}-1\right).\label{eq:the ham fg}
\end{equation}
Asking $g$ to be bumpy is equivalent to asking that $S_{g}^{*}M=F_{g}^{-1}(0)$
is non-degenerate in the sense of Definition \ref{def:non degen}.
A theorem of Abraham \cite{Abraham1970} (first properly proved by
Anosov in \cite{Anosov1982}) states that the set $\mathcal{R}_{\textrm{bumpy}}(M)$
of all bumpy Riemannian metrics on $M$ is a residual subset of the
set $\mathcal{R}(M)$ of all Riemannian metrics on $M$, so such metrics
certainly exist%
\footnote{Note that this result does \textbf{not }follow from Theorem \ref{thm:nondeg is generic}
stated above.%
}. 
\begin{rem}
The point of choosing such a metric $g$ comes down to the fact that
we can compute the Floer homology $HF^{(a,\infty)}(A_{F_{g},f},\alpha)$
(see Theorem \ref{thm:computation in the convex case} above). Since
we proved Theorem \ref{thm:computation in the convex case} for any
strictly fibrewise convex\textbf{ }non-degenerate fibrewise starshaped
hypersurface $S$, we could equally well work with such any such hypersurface
$S$ satisfying $D(S)\subseteq D^{\circ}(\Sigma)$ rather than a unit
cotangent bundle. However for aesthetic reasons we prefer to work
with a unit contangent bundle, even if it means quoting the bumpy
metric theorem.
\end{rem}
Recall that $\mathcal{G}(\Sigma)\subseteq\mbox{Ham}_{c}(T^{*}M,\omega)$
denotes the generic subset of Hamiltonian diffeomorphisms $\varphi$
with no periodic leaf-wise intersection points (cf. Proposition \ref{prop:-generically no plwip}). 
\begin{defn}
\label{what we prove theorem a for}Let $\mathcal{O}(\Sigma)\subseteq\mathcal{G}(\Sigma)$
denote the set of Hamiltonian diffeomorphisms $\varphi\ne\mathbb{1}$
such that there exists $H\in\mathcal{R}(\Sigma)\subseteq\mathcal{H}$
(cf. Definition \ref{def:non degen for H}) that generates $\varphi$.
Since $\mathcal{G}(\Sigma)$ is a generic subset of $\mbox{Ham}_{c}(T^{*}M,\omega)$
and $\mathcal{R}(\Sigma)$ is a generic subset of $\mathcal{H}$,
$\mathcal{O}(\Sigma)$ is a generic subset of $\mbox{Ham}_{c}(T^{*}M,\omega)$. 
\end{defn}
We will prove Theorem A for Hamiltonian diffeomorphisms $\varphi\in\mathcal{O}(\Sigma)$.

\subsubsection*{Proof of Theorem A}

$\ $\vspace{6 pt}

Let $\phi_{t}^{Y}$ denote the flow of the Liouville vector field
$Y$. Given $t>0$ let $(S_{g}^{*}M)_{t}:=\phi_{t}^{Y}(S_{g}^{*}M)$,
so that $((S_{g}^{*}M)_{t})_{t\geq0}$ forms%
\footnote{Technically speaking this not quite the same as a concentric family
as defined in Section \ref{sub:Monotone-homotopies-and}, as the hypersurfaces
get larger as $t$ increases, not smaller.%
} a concentric family in the language of Section \ref{sub:Monotone-homotopies-and}.
Note that if \[
F_{g}^{t}(q,p):=\frac{1}{2}\left(\left|p\right|^{2}-e^{2t}\right),
\]
then $F_{g}^{t}\in\mathcal{D}((S_{g}^{*}M)_{t})$, and $(F_{g}^{t})_{t\geq0}$
satisfies $\partial_{t}F_{g}^{t}\leq0$. 

Let us now fix $\varphi\in\mathcal{O}(\Sigma)$ and $\alpha\in[S^{1},M]$.
Choose $s>0$ such that $D(\Sigma)\subseteq D^{\circ}((S_{g}^{*}M)_{s})$
and such that \[
0<e^{-s}(\mu(\varphi)-\left\Vert \varphi\right\Vert )<\frac{1}{2}\ell(S_{g}^{*}M,\alpha).
\]
Recall we defined the quantity $\mu(\varphi)=2\kappa(\varphi)+6\left\Vert \varphi\right\Vert $
in \eqref{eq:mu phi}, where $\kappa(\varphi)$ was defined in \eqref{eq:kappaphi},
and the Hofer norm $\left\Vert \varphi\right\Vert $ was defined in
\eqref{eq:hofer norm}. Recall also from the Introduction that for
any $H\in C_{c}^{\infty}(S^{1}\times T^{*}M,\mathbb{R})$, the value
of $\kappa(H)$ (cf. Definition \ref{Define-a-semi-norm}) depends
only on $\phi_{1}^{H}\in\mbox{Ham}_{c}(T^{*}M,\omega)$. 

Now fix $T>0$ such that \[
e^{-s}(T-\left\Vert \varphi\right\Vert )>2\mu(\varphi).
\]
Next we will some choose $H\in\mathcal{R}(\Sigma)$ generating $\varphi$
with $\left\Vert H\right\Vert -\left\Vert \varphi\right\Vert $ sufficiently
small. More precisely, we first ask that $\left\Vert \varphi\right\Vert \geq\frac{5}{6}\left\Vert H\right\Vert $,
and then in addition that\begin{equation}
0\leq e^{-s}(\left\Vert H\right\Vert -\left\Vert \varphi\right\Vert )\leq\min\left\{ \frac{1}{2}\ell(S_{g}^{*}M,\alpha),\delta(S_{g}^{*}M,\alpha,e^{-s}T),\mu(\varphi)\right\} .\label{eq:smallness condition}
\end{equation}
Set \[
\ell:=\min\left\{ \ell(\Sigma),e^{-s}\ell(S_{g}^{*}M)\right\} 
\]
and choose\[
f\in\bigcap_{r>0}\mathcal{F}\left(\frac{\ell}{12},r\right).
\]
Finally choose $F\in\mathcal{D}(\Sigma)$ and $\chi\in\mathcal{X}_{0}$.
Set \[
\mathfrak{f}:=(F,f,\chi,H),\ \ \ \mathfrak{g}:=(F,f,\chi,0),\ \ \ \mathfrak{h}:=(F,f,1,0),
\]
\[
\mathfrak{i}:=(F_{g},f,1,0),\ \ \ \mathfrak{j}:=(F_{g}^{s},f,1,0).
\]
We will tacitly assume that all the action values $\mu(\varphi)-\left\Vert H\right\Vert _{+},T-\left\Vert H\right\Vert _{+},\mu(\varphi),T$
that appear in the diagram below do not lie in the relevant action
spectrums, so that all the Floer homology groups are well defined.
We now splice together the various commutative diagrams from Section
\ref{sec:Continuation-homomorphisms} to create one big commutative
diagram (we omit all the $\alpha$'s for clarity):\[
\xymatrix@C=6pt{ &  & HF^{(\mu(\varphi)-\left\Vert H\right\Vert _{+},T-\left\Vert H\right\Vert _{+})}(A_{\mathfrak{f}})\ar[d]\\
 & HF_{\mathcal{}}^{(\mu(\varphi)-\left\Vert H\right\Vert ,T-\left\Vert H\right\Vert )}(A_{\mathfrak{g}})\ar[r]\ar[ur]\ar[d]_{\cong} & HF^{(\mu(\varphi),T)}(A_{\mathfrak{g}})\ar[d]^{\cong}\\
HF^{(\mu(\varphi)-\left\Vert H\right\Vert ,\mu(\varphi)-\left\Vert H\right\Vert )}(A_{\mathfrak{j}})\ar[r]\ar[d]_{\theta} & HF_{\mathcal{}}^{(\mu(\varphi)-\left\Vert H\right\Vert ,T-\left\Vert H\right\Vert )}(A_{\mathfrak{h}})\ar[r]\ar[d] & HF^{(\mu(\varphi),T)}(A_{\mathfrak{h}})\ar[d]\\
HF^{(e^{-s}(\mu(\varphi)-\left\Vert H\right\Vert ),e^{-s}(T-\left\Vert H\right\Vert ))}(A_{\mathfrak{i}})\ar[r]\ar[drr]_{Z} & HF^{(\mu(\varphi)-\left\Vert H\right\Vert ,T-\left\Vert H\right\Vert )}(A_{\mathfrak{i}})\ar[r] & HF_{\mathcal{}}^{(\mu(\varphi),T)}(A_{\mathfrak{i}})\ar[d]^{\iota}\\
 &  & HF_{\mathcal{}}^{(\mu(\varphi),\infty)}(A_{\mathfrak{i}})
}
\]

\noindent \begin{flushleft}
Here the top right-hand triangle is the commutative diagram from Lemma
\ref{lem:continutations with H}. For this to be well defined recall
we needed \[
2\left\Vert H\right\Vert +\kappa(H)\leq\mu(\varphi)-\left\Vert H\right\Vert _{+},
\]
and this is guaranteed by our requirement that $\left\Vert \varphi\right\Vert \geq\frac{5}{6}\left\Vert H\right\Vert $.
The square below comes from Lemma \ref{lem:chi}; the vertical maps
are isomorphisms. The map $\theta$ on the left-hand side is the map
from Proposition \ref{prop:staircase}; thus $\theta$ is an isomorphism.
The map $\iota$ in the bottom right is the map \eqref{eq:iota}.
Note that by \eqref{eq:smallness condition} one has \begin{align*}
HF^{(e^{-s}(\mu(\varphi)-\left\Vert H\right\Vert ),e^{-s}(T-\left\Vert H\right\Vert ))}(A_{\mathfrak{i}},\alpha) & \cong HF^{(3\ell/4,e^{-s}(T-\left\Vert H\right\Vert ))}(A_{\mathfrak{i}},\alpha)\\
 & \cong HF_{\mathcal{}}^{(3\ell/4,e^{-s}(T-\left\Vert \varphi\right\Vert ))}(A_{\mathfrak{i}},\alpha).
\end{align*}
Thus the diagonal map $Z$ at the bottom of the diagram is the map
\[
Z(e^{-s}(T-\left\Vert \varphi\right\Vert ),\mu(\varphi)):HF_{\mathcal{}}^{(3\ell/4,e^{-s}(T-\left\Vert \varphi\right\Vert ))}(A_{\mathfrak{i}},\alpha)\rightarrow HF^{(\mu(\varphi),\infty)}(A_{\mathfrak{i}},\alpha)
\]
 from \eqref{eq:the map Z-1}. Since $\theta$ and the two vertical
maps in the top-most square are isomorphisms, we can read off from
the diagram (see Corollary \ref{cor:lw bound}) that \begin{align*}
n_{\Sigma,\alpha}(\varphi,T) & \geq n_{\Sigma,\alpha}(\varphi,(\mu(\varphi)-\left\Vert H\right\Vert _{+},T-\left\Vert H\right\Vert _{+}))\\
 & \geq\mbox{rank}\left\{ HF_{\mathcal{}}^{(\mu(\varphi)-\left\Vert H\right\Vert ,T-\left\Vert H\right\Vert )}(A_{\mathfrak{g}},\alpha)\rightarrow HF^{(\mu(\varphi),T)}(A_{\mathfrak{g}},\alpha)\right\} \\
 & \geq\mbox{rank}\, Z(e^{-s}(T-\left\Vert \varphi\right\Vert ),\mu(\varphi)).
\end{align*}
By Theorem \ref{thm:computation in the convex case}, we have \[
\mbox{rank}\, Z(e^{-s}(T-\left\Vert \varphi\right\Vert ),\mu(\varphi))=\mbox{rank}\,\widetilde{Z}(e^{-s}(T-\left\Vert \varphi\right\Vert ),\mu(\varphi)),
\]
where $\widetilde{Z}(e^{-s}(T-\left\Vert \varphi\right\Vert ),\mu(\varphi))$
is the map \eqref{eq:the map Z}. Here the relevant free time action
functional is defined using the Lagrangian $L_{g}:TM\rightarrow\mathbb{R}$,
which by definition is the Fenchel transform of the Hamiltonian $F_{g}$
from \eqref{eq:the ham fg}, and is given by $L_{g}(q,v):=\frac{1}{2}\left(\left|v\right|_{g}^{2}+1\right)$.
\par\end{flushleft}

\noindent \begin{flushleft}
Recall we denote by $\mathcal{E}_{g}:\Lambda M\rightarrow\mathbb{R}$
the functional $\mathcal{E}_{g}(q):=\int_{0}^{1}\frac{1}{2}\left|\dot{q}\right|_{g}^{2}dt$,
and that we use the special notation \[
\Lambda_{\alpha}^{a}(M,g):=\left\{ q\in\Lambda_{\alpha}M\,:\,\mathcal{E}_{g}(q)\leq\frac{1}{2}a^{2}\right\} .
\]
Denote by $\mbox{pr}_{1}:\Lambda M\times\mathbb{R}\rightarrow\Lambda M$
the first projection, and given $a\in\mathbb{R}$ denote by $i_{a}:\Lambda M\rightarrow\Lambda M\times\mathbb{R}$
the map $i_{a}(q):=(q,a)$. We complete the proof of Theorem A with
the following elementary observation. 
\par\end{flushleft}
\begin{lem}
\noindent \begin{flushleft}
Suppose $a,b>3\ell/4$. \textup{\emph{Then if $\widetilde{Z}(a,b)$
denotes the map \eqref{eq:the map Z} then it holds that }}\begin{multline*}
\mbox{\emph{rank}}\left\{ \widetilde{Z}(a,b):H(\mathbb{S}_{\alpha}^{a},\mathbb{S}_{\alpha}^{3\ell/4})\rightarrow H(\Lambda_{\alpha}M\times\mathbb{R},\mathbb{S}_{\alpha}^{b})\right\} \geq\\
\mbox{\emph{rank}}\left\{ \iota:H(\Lambda_{\alpha}^{a}(M,g),\Lambda_{\alpha}^{3\ell/4}(M,g))\rightarrow H(\Lambda_{\alpha}M,\Lambda_{\alpha}^{2b}(M,g))\right\} .
\end{multline*}

\par\end{flushleft}\end{lem}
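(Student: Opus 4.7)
The plan is to factor the inclusion-induced map $\iota$ on the Lagrangian side as a composition passing through $\widetilde{Z}(a,b)$, making the rank inequality immediate. The starting point is the explicit formula
\[
S_{L_g, f}(q, \eta) \;=\; \frac{\mathcal{E}_g(q)}{f(\eta)} \,+\, \frac{f(\eta)}{2},
\]
whose minimum over $\eta$ for fixed $q$ with $\mathcal{E}_g(q)>0$ is $\sqrt{2\mathcal{E}_g(q)}$, attained when $f(\eta) = \sqrt{2\mathcal{E}_g(q)}$, by the AM-GM inequality. Two consequences will be used repeatedly: if $(q,\eta)\in\mathbb{S}_\alpha^b$ then $\sqrt{2\mathcal{E}_g(q)} \leq S_{L_g,f}(q,\eta) < b$, and hence $q\in\Lambda_\alpha^b(M,g) \subseteq \Lambda_\alpha^{2b}(M,g)$; conversely, for any $q$ with $\mathcal{E}_g(q)<a^2/2$ one can choose $\eta$ so that $(q,\eta)\in\mathbb{S}_\alpha^a$.

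I would then construct a continuous map
\[
\varphi \colon \mathrm{int}\,\Lambda_\alpha^{a}(M,g) \longrightarrow \mathbb{S}_\alpha^{a}, \qquad \varphi(q) := \bigl(q,\, \eta(q)\bigr),
\]
where $\eta(q)$ is determined by $f(\eta(q)) := \max\bigl\{\,3\ell/4,\; \sqrt{2\mathcal{E}_g(q)}\,\bigr\}$. For $q$ with $\sqrt{2\mathcal{E}_g(q)} \geq 3\ell/4$ the minimum formula gives $S_{L_g,f}(q,\eta(q)) = \sqrt{2\mathcal{E}_g(q)}$, which is strictly less than $a$ when $q \in \mathrm{int}\,\Lambda_\alpha^{a}(M,g)$; for $q \in \mathrm{int}\,\Lambda_\alpha^{3\ell/4}(M,g)$ a direct computation yields $S_{L_g,f}(q,\eta(q)) = \mathcal{E}_g(q)/(3\ell/4) + 3\ell/8 < 3\ell/4$. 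Hence $\varphi$ restricts to a continuous map of pairs $\bigl(\mathrm{int}\,\Lambda_\alpha^{a},\,\mathrm{int}\,\Lambda_\alpha^{3\ell/4}\bigr) \to \bigl(\mathbb{S}_\alpha^{a},\,\mathbb{S}_\alpha^{3\ell/4}\bigr)$. Meanwhile, by the first observation above, the projection $\mathrm{pr}_1$ defines a map of pairs $(\Lambda_\alpha M \times \mathbb{R},\,\mathbb{S}_\alpha^{b}) \to (\Lambda_\alpha M,\,\Lambda_\alpha^{2b}(M,g))$, and the composition $\mathrm{pr}_1 \circ \varphi$ is tautologically the inclusion of $\mathrm{int}\,\Lambda_\alpha^{a}$ into $\Lambda_\alpha M$.

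Since $a$ and $b$ lie outside the action spectrum $\mathcal{A}(\Sigma,\alpha)$, and $3\ell/4 < \ell$ lies strictly below the lowest critical value of $\mathcal{E}_g|_{\Lambda_\alpha M}$, a standard deformation lemma at regular values gives a homotopy equivalence of pairs $\bigl(\Lambda_\alpha^{a},\,\Lambda_\alpha^{3\ell/4}\bigr) \simeq \bigl(\mathrm{int}\,\Lambda_\alpha^{a},\,\mathrm{int}\,\Lambda_\alpha^{3\ell/4}\bigr)$. Putting everything together, the composition
\[
H(\Lambda_\alpha^a, \Lambda_\alpha^{3\ell/4}) \,\cong\, H(\mathrm{int}\,\Lambda_\alpha^a, \mathrm{int}\,\Lambda_\alpha^{3\ell/4}) \,\xrightarrow{\varphi_*}\, H(\mathbb{S}_\alpha^a, \mathbb{S}_\alpha^{3\ell/4}) \,\xrightarrow{\widetilde{Z}(a,b)}\, H(\Lambda_\alpha M \times \mathbb{R}, \mathbb{S}_\alpha^b) \,\xrightarrow{(\mathrm{pr}_1)_*}\, H(\Lambda_\alpha M, \Lambda_\alpha^{2b})
\]
agrees with $\iota$, which yields at once $\mathrm{rank}(\iota) \leq \mathrm{rank}(\widetilde{Z}(a,b))$. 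The only real point of care is the bookkeeping for open versus closed sublevel sets at the regular values $a$, $b$, $3\ell/4$; this is routine but should be spelled out to confirm that the homotopy equivalences respect the pair structure throughout.
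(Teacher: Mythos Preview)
Your proof is correct and follows the same strategy as the paper: sandwich $\widetilde{Z}(a,b)$ between a section $\Lambda_\alpha M \to \Lambda_\alpha M \times \mathbb{R}$ and the projection $\mathrm{pr}_1$, so that the composite is the inclusion $\iota$ and the rank inequality follows. The paper uses the constant section $i_a(q)=(q,a)$, whereas you use the variable section $\varphi(q)=(q,\eta(q))$ with $f(\eta(q))=\max\{3\ell/4,\sqrt{2\mathcal{E}_g(q)}\}$. Your choice is arguably cleaner: it yields directly a map of pairs into $(\mathbb{S}_\alpha^a,\mathbb{S}_\alpha^{3\ell/4})$, while the paper's $i_a$ does not in general send $\Lambda_\alpha^{3\ell/4}$ into $\mathbb{S}_\alpha^{3\ell/4}$ (for large $a$ one has $S_{L_g,f}(q,a)\approx a/2>3\ell/4$), so the paper is tacitly using that all the sections $i_c$ are homotopic. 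One small correction: the spectrum you invoke for the regular-value deformation should be $\mathcal{A}(S_g^*M,\alpha)$ (i.e.\ lengths of closed geodesics), not $\mathcal{A}(\Sigma,\alpha)$; and note that the lemma as stated does not assume $a,b$ avoid this spectrum, so strictly speaking you should either perturb $a$ slightly to a nearby regular value and pass back, or observe that your $\varphi$ already lands in the open sublevel set on the interior and appeal to continuity of homology under such exhaustions. These are exactly the bookkeeping points you flag at the end.
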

\begin{proof}
\noindent \begin{flushleft}
We first show that for any $c\geq3\ell/4$ (we will apply this with
$c=3\ell/4$ and $c=a$) we have\[
i_{c}\left(\Lambda_{g}^{c}(M,g)\right)\subseteq\mathbb{S}_{\alpha}^{c}.
\]
Indeed, for any $\eta\geq c$ one has \[
S_{L_{g},f}(q,\eta)=\frac{1}{\eta}\mathcal{E}_{g}(q)+\frac{\eta}{2},
\]
and hence \[
S_{L_{g},f}(i_{c}(q))=\frac{1}{c}\mathcal{E}_{g}(q)+\frac{c}{2}\leq\frac{1}{c}\cdot\frac{1}{2}c^{2}+\frac{c}{2}=c.
\]
Secondly we claim that \[
\mbox{pr}_{1}\left(\mathbb{S}_{\alpha}^{a}\right)\subseteq\Lambda_{\alpha}^{2a}(M,g).
\]
To see this, note that in general $S_{L_{g},f}(q,\eta)=\frac{1}{f(\eta)}\mathcal{E}_{g}(q)+\frac{f(\eta)}{2}$,
and thus if $S_{L_{g},f}(q,\eta)\leq a$ then as $\frac{1}{f(\eta)}\mathcal{E}_{g}(q)\geq0$
we have $f(\eta)\leq2a$, and hence \[
\mathcal{E}_{g}(q)=f(\eta)\left(S_{L_{g},f}(q,\eta)-\frac{f(\eta)}{2}\right)\leq2a(a-0)=2a^{2}.
\]
The result now follows from the observation that\[
\mbox{rank}\,\widetilde{Z}(a,b)\geq\mbox{rank}\,((\mbox{pr}_{1})_{*}\circ\widetilde{Z}(a,b)\circ(i_{a})_{*}).
\]

\par\end{flushleft}
\end{proof}
\bibliographystyle{amsplain}
\bibliography{C:/Users/Will/Desktop/willbibtex}

\end{document}